\documentclass[11pt,reqno,UTF8,twoside]{amsart}
\usepackage{mathrsfs}
\usepackage{amsfonts,amssymb,amsmath,amsthm}
\usepackage{cite}
\usepackage[colorlinks=true,citecolor=red,linkcolor=blue]{hyperref}
\usepackage{titletoc}
\usepackage{geometry}
\usepackage{bm}
\usepackage{indentfirst}
\usepackage{graphicx}
\usepackage{stmaryrd}
\usepackage{tikz}
\usepackage{ulem}
\usepackage{color,soul}
\usepackage{setspace}
\usepackage[pagewise]{lineno}
\geometry{a4paper}
\usepackage{float}

\raggedbottom

\usepackage{todonotes}

\headsep 0cm \evensidemargin 0cm \oddsidemargin 0cm \textheight
24cm \textwidth 16cm \topmargin 0pt \headsep 16pt \footskip 27pt
\def\T {{\mathbb T}}
\def\C {{\mathbb C}}

\def\R {\mathbb{R}}
\def\N {\mathbb{N}}
\def\Z {\mathbb{Z}}
\def\d{{\,\rm d}}
\numberwithin{equation}{section}
\newtheorem{theorem}{Theorem}[section]
\newtheorem{lemma}[theorem]{Lemma}

\newtheorem{proposition}[theorem]{Proposition}

\newtheorem{remark}[theorem]{Remark}
\theoremstyle{definition}

\newtheorem{definition}[theorem]{Definition}

\newcommand{\supp}{\operatorname{supp}}
\newcommand{\Avg}[2]{\langle #1\rangle_{#2}}
\def\poscals#1#2{\langle#1,#2\rangle}

\tikzstyle{idea} = [rectangle, rounded corners, minimum width=2cm, minimum height=1cm, text centered, draw=black, align=center]
\tikzstyle{process} = [rectangle, minimum width=3cm, minimum height=1cm, text centered, draw=black, align=center]
\tikzstyle{point} = [coordinate, on grid]
\tikzstyle{arrow} = [thick,->,>=stealth]
\tikzstyle{dasharrow} = [dashed,->,>=stealth]

\linespread{1.2}

\makeatletter
\@namedef{subjclassname@2020}{\textup{2020} Mathematics Subject Classification}
\makeatother

\title[Control of KdV on space-time measurable sets]{\Large  The periodic KdV with control on space-time measurable sets}

\author[Jingrui Niu]{Jingrui Niu}

\author[Ming Wang]{Ming Wang}

\author[Shengquan Xiang]{Shengquan Xiang}

\address[Jingrui Niu]{Sorbonne Université, CNRS, Université Paris Cité, Inria Team CAGE, Laboratoire Jacques-Louis Lions (LJLL), F-75005 Paris, France
}
\email{jingrui.niu@sorbonne-universite.fr}

\address[Ming Wang]{School of Mathematics and Statistics, HNP-LAMA, Central South University, Chang-sha, Hunan 410083, P.R. China}
 \email{m.wang@csu.edu.cn}

\address[Shengquan  Xiang]{School of Mathematical Sciences, Peking University, 100871, Beijing, China.}
\email{shengquan.xiang@math.pku.edu.cn}

\begin{document}

        \subjclass[2020]{ 35Q53, 42A99, 76B15, 93C20
                }
	
        \keywords{ KdV, space-time measurable set, controllability, augmented observability inequality
        }

    \vspace{5mm}
\begin{abstract}
In this paper, we establish the local exact controllability of the KdV equation on torus around equilibrium states, where both the spatial control region and the temporal control region are sets of positive measure. The proof is based on a novel strategy for proving observability inequalities on space-time measurable sets. This approach is applicable to a broad class of dispersive equations on torus. 

\end{abstract}

    \maketitle

     \setcounter{tocdepth}{1}
 \tableofcontents

\section{Introduction}
In this article, we concentrate on the control problem of the Korteweg--De Vries (KdV) equation posed on the one-dimensional torus $\T=\R/2\pi\Z$, 
\begin{equation}\label{eq: CKdV-tori-intro}
\partial_tu(t,x)+\partial_x^3u(t,x)+u(t,x)\partial_xu(t,x)=f(t,x)\mathbf{1}_{E_T\times F}(t, x),\; (t,x)\in[0,T]\times\T,
\end{equation}
where $E_T\subset[0,T]$ and $F\subset\mathbb{T}$ are two measurable sets with positive measures. 

The KdV equation, since its derivation in 1895, has served as an important nonlinear dispersive model for waves on shallow water surfaces, and has been extensively studied in the literature from various perspectives. The well-posedness problem has been extensively investigated in the literature, notably in the works \cite{Bona-Smith,Bourgain-93-kdv,KPV96,CKSTT-03,KV-19}, among others.  

The results concerning the associated controllability properties are fruitful. We refer to the works \cite{Russell-Zhang-96, LRZ-2010, Rosier97,GG08,KX,Xiang-19} and the references therein, where the emphasize is placed on the corresponding observability inequality.    In addition, the KdV on the critical length intervals with nonlinear control has also attracted considerable attention. We refer the interested readers to \cite{CC04,Cerpa07,CC09,CKN,NX, nguyen2023}. We also refer to the surveys \cite{RZ09, Cerpa14} and the references therein.

\vspace{2mm}

The primary objective of this paper is twofold. Firstly, we aim to prove the exact controllability of the KdV equation \eqref{eq: CKdV-tori-intro} around equilibrium states with a control distributed on a space-time measurable set, instead of a usual control region $[0,T]\times\omega$, where $\omega\subset\T$ is open. Secondly, instead of applying the classic moment method, we establish the linear observability via a new approach inspired by the proof of Miheev's theorem from harmonic analysis, which is more suitable for the measurable settings. This approach is notably robust and applicable to establishing observability for a wide range of perturbed operators. 

\subsection{Mass-conserved controllability}
KdV equation governs the behavior of shallow water waves in a channel. It is known to possess an infinite set of conserved integral quantities, one of which is the \textit{total mass}:
\[
\int_{\T}u(t,x)\d x. 
\]
We study the KdV equation \eqref{eq: CKdV-tori-intro} with a distributed control input function $f(t,x)$ serving as a forcing source. From the historical origins of the KdV equation,  it is natural to keep the conservation law of the total mass.

For any function $\varphi\in L^1(\T)$, we define its average over the measurable set $A\subset\T$ via
\begin{equation}\label{eq: average-def}
\langle\varphi\rangle_{A}:=\left\{
\begin{array}{ll}
     \frac{1}{|A|}\int_{A}\varphi(x)\d x,&|A|>0,  \\
     0,&|A|=0. 
\end{array}
\right.
\end{equation}
To study the mass-conserved system, for any $M\in\R$, we introduce 
\begin{equation}\label{eq: defi-mass-L^2}
L^2_M(\T):=\{\varphi\in L^2(\T);\Avg{\varphi}{\T}=M\}.
\end{equation}
{\bf Definition} (Mass-conserved controllability) We say the KdV system \eqref{eq: CKdV-tori-intro} achieves the mass-conserved exact controllability if and only if for $T>0$ and $M\in\R$, there exists a control function $f$ such that for any $u_0,u_1\in L^2_M(\T)$, the KdV equation \eqref{eq: CKdV-tori-intro} admits a unique solution $u\in C([0,T];L^2(\T))$ satisfying that 
$u(0,x)=u_0(x) \mbox{ and }u(T,x)=u_1(x)$.

\vspace{2mm}
It is notable that in order to achieve the mass-conserved controllability, the control function $f$ must have a specific form. Given a measurable set $F\subset\T$ with  positive measure, let $g=\frac{1}{|F|}\mathbf{1}_{F}$. Then, for any function $h$, we consider the control function $f$ in forms of
\begin{equation}\label{eq: defi-f}
\mathcal{L}(h):=\frac{1}{|F|}\mathbf{1}_{F}(x)\left(h(t,x)-\frac{1}{|F|}\int_{\T}h(t,y)\mathbf{1}_F(y)\d y\right).
\end{equation}

The controllability problem asks for:
Given $T>0$, $M\in\R$ and two states $u_0,u_1$ in $L^2_M(\T)$, can one find a control input $f$ such that the equation \eqref{eq: CKdV-tori-intro} achieves the mass-conserved controllability?  

A first answer to this problem was provided in \cite{Russell-Zhang-96} by Russell and Zhang. They proved that when $E_T=[0,T]$ is the whole time interval, and $F=\omega$ is an open subset, one can find a control $f$ with $\mathrm{supp}f\subset [0,T]\times\omega$ to achieve the mass-conserved controllability locally. After that, in \cite{LRZ-2010}, Laurent, Rosier, and Zhang considered the stabilization problem related to \eqref{eq: CKdV-tori-intro} and obtained a global mass-conserved controllability when the control region is an open set. 

Recently, control problems involving measurable control regions have garnered significant attention. Extensive research has focused on parabolic models, including heat equations on bounded domains (see \cite{GWangJEMS14,BM23,HWWLog24,GLBMO}), heat equations with bounded potentials on $\R^n$ (see \cite{EgidiVes,WWZZ,WangZhang23}), and heat equations with potentials growing at infinity (see \cite{BKJaming21,Dickeuncertainty2023JFAA,Dickespectral2024,wang2024quantitative} and the references therein). In these works, both the time control region $E_T$ and the spatial control region $F$ can be   measurable sets.

In contrast, results for dispersive equations remain relatively scarce. To the best of our knowledge, existing works are limited to Schr\"odinger equations on $\T^2$ \cite{BurqZworski2019}, fractional Schr\"odinger equations on $\T$ \cite{alphonse2025}, Schr\"odinger equations on $\R$ \cite{SuSunYuanJFA25,HWW25} and on $\R^2$ \cite{Bal-Martin23}. Furthermore, in these cases, only the spatial control region $F$ is permitted to be a measurable set; the time region $E_T$ typically requires stronger assumptions, containing an interval.

This disparity naturally raises the question: For the KdV equation \eqref{eq: CKdV-tori-intro}, if $E_T$ and $F$ are both measurable sets with positive Lebesgue measure, can one achieve local mass-conserved controllability as established in \cite{Russell-Zhang-96}? There are two main difficulties for this problem:
\begin{enumerate}
    \item On the one hand, even in a highly simplified case such as $E_T\times F$, where $E_T$ is a time measurable set with $|E_T|>0$ and $F$  an generic open set, the known techniques do not lead to controllability and corresponding observability results for dispersive models.  
    \item On the other hand, an additional difficulty arises due to the mass-conserved constraint. So the new approach developed for the measurable setting must be adapted accordingly. 
\end{enumerate}

\subsection{Main result}
In this article we prove the following result on equation \eqref{eq: CKdV-tori-intro}.
\begin{theorem}\label{thm: CKDV-N-intro}
Let $T>0$. For any $M\in \mathbb{R}$, there exists a constant $\delta>0$ such that for any $u_0,u_1\in L^2(\T)$ with $\Avg{u_0}{\T}=\Avg{u_1}{\T}= M$ and $\|u_0-\Avg{u_0}{\T}\|_{L^2(\T)}+\|u_1-\Avg{u_1}{\T}\|_{L^2(\T)}<\delta$, one can find a control $h\in L^2([0,T]\times\T)$ such that the KdV equation
\begin{equation*}
\partial_tu+\partial_x^3u+u\partial_xu=\mathcal{L}(h)\mathbf{1}_{E_T\times F},
\end{equation*}
has a unique solution $u\in C([0,T];L^2(\T))$ satisfying that
\begin{equation*}
u(0,x)=u_0(x),u(T,x)=u_1(x).
\end{equation*}
\end{theorem}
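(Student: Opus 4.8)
The plan is to derive the nonlinear controllability from the exact controllability of the linearised equation together with a fixed-point argument. I would first use mass conservation to eliminate the average: writing $u=M+v$ with $\Avg{v}{\T}=0$ transforms \eqref{eq: CKdV-tori-intro} into $\partial_t v+\partial_x^3v+M\partial_xv+v\partial_xv=\mathcal L(h)\mathbf 1_{E_T\times F}$, and since $\mathcal L(h)$ has vanishing spatial average the space $L^2_0(\T)$ is preserved, so it suffices to steer the small zero-average datum $v_0=u_0-M$ to $v_1=u_1-M$. For such small data the quadratic term $v\partial_xv$ is a perturbation; hence, once the linear problem $\partial_tv+\partial_x^3v+M\partial_xv=\mathcal L(h)\mathbf 1_{E_T\times F}$ is exactly controllable with a bounded control operator, I would solve the controlled linear problem with source $-v\partial_xv$ and iterate, using the $L^2(\T)$ well-posedness of periodic KdV (and, if needed, Bourgain $X^{s,b}$ estimates to absorb the derivative loss) to obtain a strict contraction for $\delta$ small.

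By the Hilbert Uniqueness Method, the linear controllability is equivalent to an observability inequality for the adjoint group. As $A=\partial_x^3+M\partial_x$ is skew-adjoint on $L^2_0(\T)$, the adjoint solution is a free KdV solution $\phi(t,x)=\sum_{k\neq 0}c_ke^{i\omega_kt}e^{ikx}$ with $\omega_k=k^3-Mk$, and a short computation gives $\mathcal L^*\phi=\tfrac1{|F|}\mathbf 1_F\big(\phi-\Avg{\phi}{F}\big)$. The inequality I must prove is therefore
\[
\|\phi(0)\|_{L^2(\T)}^2\lesssim\int_{E_T}\int_F\big|\phi(t,x)-\Avg{\phi(t,\cdot)}{F}\big|^2\,\d x\,\d t.
\]
For each fixed $x$ the integrand is $|\sum_k b_k(x)e^{i\omega_kt}|^2$ with $b_k(x)=c_k(e^{ikx}-m_k)$ and $m_k=\tfrac1{|F|}\int_Fe^{ikx}\,\d x$, so the temporal frequencies are exactly $\{\omega_k\}$, independently of $x$.

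The decisive step is a temporal Ingham-type inequality on the measurable set $E_T$. Because the gaps grow, $\omega_{k+1}-\omega_k=3k^2+3k+1-M\to\infty$, I expect $\{e^{i\omega_kt}\}$ to be a Riesz sequence in $L^2(E_T)$ for every $E_T$ of positive measure, that is $\int_{E_T}|\sum_kb_ke^{i\omega_kt}|^2\,\d t\geq c(E_T)\sum_k|b_k|^2$. This is where the argument inspired by Miheev's theorem enters, and it is the main obstacle: the naive Gram-matrix estimate fails, since for an arbitrary measurable $E_T$ the Fourier transform $\widehat{\mathbf 1_{E_T}}$ carries no quantitative decay, so one cannot obtain diagonal dominance directly and must instead exploit the super-linear gap growth through a density-point and scale-localisation argument modelled on Miheev's proof. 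Granting this inequality, Fubini together with the identity $\int_F|e^{ikx}-m_k|^2\,\d x=|F|(1-|m_k|^2)$ reduces the spatial factor to the uniform lower bound $1-|m_k|^2\geq c_0>0$ for all $k\neq 0$, which holds because $|m_k|<1$ for each such $k$ while $m_k\to 0$ by Riemann–Lebesgue; combining the two yields the observability, hence the linear controllability. Note that here the spatial measurable set $F$ is harmless, requiring only $|F|>0$, whereas all the difficulty concentrates in the temporal set $E_T$.

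Two technical points would remain. For exceptional $M$ the map $k\mapsto\omega_k$ can fail to be injective (precisely when $k^2+kj+j^2=M$), but this concerns only finitely many modes, which I would group together and treat as a finite-dimensional block, leaving the asymptotic gap growth to govern the estimate. Finally, to close the nonlinear scheme one verifies that the HUM control map is bounded from $L^2_0(\T)\times L^2_0(\T)$ into $L^2([0,T]\times\T)$ and that the Duhamel term produced by $v\partial_xv$ is small in $C([0,T];L^2(\T))$, so that the fixed-point map is a contraction whenever $\|u_0-\Avg{u_0}{\T}\|_{L^2(\T)}+\|u_1-\Avg{u_1}{\T}\|_{L^2(\T)}<\delta$.
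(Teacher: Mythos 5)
Your overall architecture---subtracting the mean, HUM duality, and a contraction argument for the nonlinear term---matches the paper (Sections \ref{sec: linearized system} and \ref{sec: nonlinear case}), and two of your ingredients are genuinely correct: the spatial coercivity $1-|m_k|^2\geq c_0>0$ for $k\neq 0$ is exactly Lemma \ref{lem: coercive-L_k}, and your Fubini reduction of the observability to a temporal inequality, with resonant values of $M$ grouped into a finite block, is a valid reduction \emph{modulo} that temporal inequality. The gap is precisely the step you ``grant'': the Riesz-sequence bound $\int_{E_T}|\sum_k b_ke^{i\omega_kt}|^2\,\d t\geq c(E_T)\sum_k|b_k|^2$ for an arbitrary measurable $E_T$ with $|E_T|>0$. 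This is not a technical lemma to be assumed---it is the entire content of the problem---and the tools you invoke cannot deliver it. Growth of the gaps $\omega_{k+1}-\omega_k\to\infty$ is not a usable hypothesis: the squares $\{k^2\}$ have growing gaps yet fail to be a $\Lambda(4)$ set, and Zygmund/Miheev-type restriction theorems on measurable sets require lacunarity or a $\Lambda(p)$, $p>2$, hypothesis. For the scalar frequency set $\{k^3-Mk\}\subset\Z$ the standard sufficient criterion for $\Lambda(4)$---uniformly bounded multiplicity of representations by differences---fails outright (already $16^3-15^3=9^3-2^3=721$, giving four pairs $(k,m)$ counting signs, and the multiplicity of differences of cubes is not bounded), so Miheev's theorem is simply unavailable in your one-dimensional reduction, and a ``density-point and scale-localisation argument modelled on Miheev's proof'' is not a substitute for it.

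The deeper issue is strategic: by fixing $x$ and applying Fubini you collapse to scalar temporal frequencies, discarding exactly the structure the paper exploits. Note that the paper's remark after Proposition \ref{thm-1} applies Miheev's theorem only to the \emph{two-dimensional} lattice set $\{(k,p(k))\}\subset\Z^2$, which \emph{is} $\Lambda(4)$ because the system $k_1-k_2=\alpha_1$, $p(k_1)-p(k_2)=\alpha_2$ has at most $d-1$ solutions (Lemma \ref{lem-str}); this bounded multiplicity is what powers both the $L^4$ Strichartz estimate and the high-frequency Gram estimate (Lemma \ref{lem-high-freq}), which needs only the Plancherel tail $\sum_{|\alpha|>N}|\widehat{\mathbf{1}_G}(\alpha)|^2$ to be small, not any pointwise decay of $\widehat{\mathbf{1}_G}$. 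The finitely many low modes are then adjoined one at a time by the translation-stability plus compactness ``augmented observability'' argument (Lemmas \ref{lem: perturbative} and \ref{lem-add-freq}); for the mass-conserved KdV the same two-step scheme is run with $\ell^2_l$-valued coefficients $L(k,l)\widehat{\varphi}(k)$ and the coercivity of Lemma \ref{lem: coercive-L_k} (Proposition \ref{prop: hf-est-kdv}, Lemma \ref{lem-add-freq-kdv}, Proposition \ref{prop: twist ob}), never invoking a full Riesz-sequence property of $\{e^{ik^3t}\}$ with arbitrary scalar coefficients. If you wish to keep your temporal reduction, the viable route is not Miheev but the paper's own high/low-frequency iteration (the augmented observability Lemma \ref{lem-add-freq} is stated for $\T^m$, $m\geq1$, so it applies in your one-dimensional setting), with the high-frequency part argued via the Plancherel tail of $\widehat{\mathbf{1}_{E_T}}$ and a careful multiplicity count. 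Finally, a small point: Bourgain spaces are not optional in the nonlinear step---the $L^2$ theory of periodic KdV and the bilinear estimate for $\partial_x(u^2)$ require them, as in the paper's fixed point in $Z^{0,\frac{1}{2}}_T$ via Lemma \ref{lem: bourgain bilinear-est}.
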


Theorem \ref{thm: CKDV-N-intro} extends the results of \cite{Russell-Zhang-96} to the general space-time measurable setting. On the one hand, the case considered here corresponds to a ``rough control" situation, where $\supp g$ is merely a measurable subset of $\T$ with positive Lebesgue measure. On the other hand, the control is not applied over the entire time interval in the usual sense; instead, the time domain of actuation is itself merely a positively measurable subset in $[0,T]$.

\begin{remark}
In the proof, rather than employing the classical moment method, we adopt a strategy inspired by Miheev's theorem in harmonic analysis. By integrating this with a high/low frequency decomposition approach from the compactness-uniqueness method, we develop a novel three-step approach (we refer to Section \ref{sec: strategy of the proof} for more details) to obtain the local controllability result.

This new approach is particularly well-suited to the ``rough control" setting in $\T$. We employ it to establish observability results for a general class of dispersive operators $P(D)$, as stated in Theorem \ref{thm-spacetime-a} (see Section \ref{sec: Observability from space-time measurable sets}). Furthermore, we apply the method in a more specific context involving the KdV equation with mass-conservation constraints, as discussed in Section \ref{sec: linearized system}. We demonstrate that, in various settings, our method yields effective results and can be viewed as an improvement over the classical moment method and compactness-uniqueness method. For further discussion, we refer the reader to Section \ref{sec: new method}.
\end{remark}

\subsection{Strategy of the proof}\label{sec: strategy of the proof}
Our proof of Theorem \ref{thm: CKDV-N-intro} is divided into three steps. Firstly, in Section \ref{sec: Observability from space-time measurable sets}, we develop a new approach to establish the observability for a generalized dispersive equation from a measurable set with positive measure; see Theorem \ref{thm-spacetime-a}. Secondly, in Section \ref{sec: linearized system}, we adapt this new approach to prove a linear mass-conserved KdV observability and construct the control operator $\mathcal{K}$ based on the preceding observability. Thirdly, we use a fixed point argument in the Bourgain spaces to complete the proof for the nonlinear case in Section \ref{sec: nonlinear case}.

\subsubsection{Step 1: observability from measurable sets}
We first consider the following observability problem for a generalized dispersive model. Let $p: \mathbb{Z} \rightarrow \mathbb{R}$ be a monic polynomial of degree $d\geq 2$. We will use $\Z[x]$ to denote the ring of polynomials with integer coefficients. Consider the dispersive equation on $\mathbb{T}$
\begin{align}\label{equ-1}
 \partial_t u=i P(D) u, \quad u(0, x)=u_0(x) \in L^2(\mathbb{T})
\end{align}
where $D=i^{-1} \partial_x$ and $P(D)$ denotes the differential operator with symbol of $p(k)$. {From now on, unless otherwise specified, $p$ should satisfy the assumptions above.} We obtain
\begin{theorem}[Observability from space-time positive measure set]\label{thm-spacetime-a}
Given $T>0$,  let $a\in L^1_x\left(\T;L^\infty_t([0,T])\right)$. Then there exists a constant $C>0$ such that
\begin{align}\label{equ-55-14}
 \int_0^T\int_\T |a(t,x)e^{itP(D)}u_0|^2\d x \d t\leq C\|u_0\|^2_{L^2(\T)}, \quad \forall u_0\in L^2(\T).   
\end{align}
If, in addition, $\|a\|_{L^1_x(\T;L^\infty_t[0,T])}>0$, then there exists $C'>0$ such that
\begin{align}\label{equ-55-15-intro}
\|u_0\|^2_{L^2(\T)}\leq  C'\int_0^T\int_\T |a(t,x)e^{itP(D)}u_0|^2\d x \d t, \quad \forall u_0\in L^2(\T). 
\end{align}
\end{theorem}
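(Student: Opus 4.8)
The plan is to exploit the two structural features that turn this into a harmonic‑analysis statement on the torus. Writing $v(t,x):=e^{itP(D)}u_0=\sum_{k\in\Z}\widehat{u_0}(k)\,e^{i(kx+p(k)t)}$, the integrality of the coefficients of $p$ gives $p(k)\in\Z$, so $v$ is $2\pi$‑periodic in $t$ and may be regarded as a function on $\T^2_{t,x}$ whose Fourier support is the graph $\Gamma=\{(p(k),k):k\in\Z\}\subset\Z^2$. Since $k\mapsto(p(k),k)$ is injective there are no collisions, hence $\|v\|_{L^2(\T^2)}^2=2\pi\,\|u_0\|_{L^2(\T)}^2$. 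Thus both inequalities become comparisons between the full $L^2(\T^2)$‑norm of a $\Gamma$‑spectral function and its weighted norm on a space–time set, and the whole problem is moved into the realm of uncertainty/observability estimates on $\T^2$.

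For the boundedness inequality \eqref{equ-55-14} I would first prove the uniform‑in‑$x$ energy bound: grouping frequencies by the value $m=p(k)$, each level set $\{k:p(k)=m\}$ has at most $d$ elements, so Cauchy–Schwarz in $t$ (with at most $d$ terms and $|e^{ikx}|=1$) yields $\int_0^{2\pi}|v(t,x)|^2\,dt\le C_d\,\|u_0\|_{L^2(\T)}^2$ independently of $x$. Covering $[0,T]$ by $\lceil T/2\pi\rceil$ periods and integrating against $|a(t,x)|^2\le\|a(\cdot,x)\|_{L^\infty_t}^2$ then gives \eqref{equ-55-14} with constant controlled by the weight; equivalently one may just invoke the $L^2_x$‑unitarity $\|v(t,\cdot)\|_{L^2(\T)}=\|u_0\|_{L^2(\T)}$ and Fubini. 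This part is routine and is where the degree $d$ enters only through the multiplicity bound.

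The heart of the matter is the observability \eqref{equ-55-15-intro}. Since $\|a\|_{L^1_x(L^\infty_t)}>0$, after thresholding I may restrict to a space–time set $\Omega\subset[0,T]\times\T$ of positive measure on which $|a|\ge\varepsilon$, reducing the claim to $\|u_0\|_{L^2}^2\le C\int_\Omega|v|^2$. I would then split $u_0=u_0^{\mathrm{lo}}+u_0^{\mathrm{hi}}$ with $\widehat{u_0^{\mathrm{lo}}}$ supported in $|k|\le N$. The low piece $v^{\mathrm{lo}}$ is a trigonometric polynomial on $\T^2$ with at most $2N+1$ spectral points on $\Gamma$, and the Miheev‑type lemma — a Turán–Nazarov bound over a set of positive measure whose constant depends only on the number of frequencies and on $|\Omega|$, never on their size — gives $\|u_0^{\mathrm{lo}}\|_{L^2}^2\le C(N,|\Omega|)\int_\Omega|v^{\mathrm{lo}}|^2$. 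This is precisely the ingredient that replaces Ingham's inequality and survives when the time set contains no interval.

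I expect the high‑frequency (large‑$N$) regime to be the main obstacle. A naive almost‑orthogonality expansion of $\int_\Omega|v^{\mathrm{hi}}|^2$ produces off‑diagonal terms $\widehat{\mathbf 1_\Omega}(k-l,\,p(k)-p(l))$ that tend to $0$ only by Riemann–Lebesgue and are \emph{not summable} for a merely measurable $\Omega$, so orthogonality alone cannot close the estimate — this is exactly the difficulty the method must overcome. Here I would lean on the Miheev‑type mechanism to produce a lower bound uniform in the frequency parameters that does not require summable off‑diagonal decay, supplying the missing high‑frequency observability. Combined with the finite‑dimensional low‑frequency bound and a compactness–uniqueness argument — in which a normalized counterexample sequence is forced to concentrate on the low, band‑limited modes, where the real‑analyticity of $\Gamma$‑spectral trigonometric polynomials gives unique continuation from the positive‑measure set $\Omega$ and hence a contradiction — this removes the $N$‑dependence and yields \eqref{equ-55-15-intro}. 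Making the two Turán–Nazarov reductions (in $t$ and in $x$) compose on a genuinely two‑dimensional positive‑measure set, and ensuring the $N$‑dependent low‑frequency constant does not spoil the uniform high‑frequency estimate, is the delicate point on which the argument stands or falls.
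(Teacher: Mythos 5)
Your reduction to $\T^2$ and your proof of the bound \eqref{equ-55-14} are essentially sound: since $p(k)\in\Z$ the solution is $2\pi$-periodic in $t$, integrating over whole periods kills every non-resonant pair exactly, and the multiplicity bound $\#\{l:p(l)=p(k)\}\le d$ gives $\int_0^{2\pi}|v(t,x)|^2\d t\lesssim_d\|u_0\|_{L^2(\T)}^2$ uniformly in $x$; this is a cleaner route to the paper's Lemma \ref{lem-Stri-LinftyL2} (with the caveat that pairing with $|a|^2$ really requires $a\in L^2_x(\T;L^\infty_t)$, a weight-power mismatch the paper's own proof shares, since it bounds $\int\!\!\int|a||v|^2$; also your ``equivalently, $L^2_x$-unitarity and Fubini'' aside is not equivalent, as it would need $a\in L^\infty$). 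The thresholding reduction and the Tur\'an--Nazarov treatment of the band-limited low part are also fine. The two steps you yourself flag as critical, however, contain genuine gaps.

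First, the high-frequency step. You assert that the almost-orthogonality expansion cannot close because the off-diagonal terms $\widehat{\mathbf{1}_\Omega}\bigl(k-l,p(k)-p(l)\bigr)$ ``are not summable''. Absolute summability is never needed. Apply Cauchy--Schwarz to the off-diagonal sum; what is then required is square-summability of $\widehat{\mathbf{1}_\Omega}$, which holds for \emph{every} measurable $\Omega$ by Plancherel, combined with two structural facts: (i) each $\alpha\in\Z^2$ is a difference $\lambda_k-\lambda_l$, $\lambda_k=(k,p(k))$, in at most $d-1$ ways (fundamental theorem of algebra), so the double sum over $(k,l)$ collapses to a single sum over $\alpha$; and (ii) if $|k|,|l|>N$ and $k\neq l$ then $|\lambda_k-\lambda_l|>N$, so only the Plancherel tail $\sum_{|\alpha|>N}|\widehat{\mathbf{1}_\Omega}(\alpha)|^2$ enters, and it tends to $0$ as $N\to\infty$. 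Hence the off-diagonal part is $o_N(1)\sum_{|k|>N}|a_k|^2$ and is absorbed by the diagonal $|\Omega|\sum_{|k|>N}|a_k|^2$ --- this is precisely the paper's Lemma \ref{lem-high-freq}. So the obstacle you propose to circumvent does not exist, and your proposed substitute (``lean on the Miheev-type mechanism'') is circular: Miheev's theorem applied to the full infinite spectrum $\Gamma$ is essentially the statement being proved.

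Second, the removal of the $N$-dependence. Your compactness--uniqueness scheme ends with a normalized limit whose evolution vanishes on $\Omega$, and you conclude via ``real-analyticity of $\Gamma$-spectral trigonometric polynomials''. But the limit of the counterexample sequence is \emph{not} band-limited: the high modes only converge weakly (or strongly, if one first invokes the high-frequency estimate on differences), and the limit is a general $L^2$ function with spectrum in all of $\Gamma$. It is not real-analytic, and unique continuation from a positive-measure set for such infinite-band functions is essentially as strong as the observability you are trying to prove, so this step begs the question. The paper closes the loop differently (Lemma \ref{lem-add-freq}): low frequencies are adjoined \emph{one at a time}, so the limit has the form $f+c$ with $\supp\widehat{f}\subset\Lambda$, $0\notin\Lambda$, and ``uniqueness'' is obtained with no analyticity at all, by a translation trick: the high-frequency estimate is first shown to be stable under $\Omega\mapsto\Omega\cap(\Omega-h)$ for small $h$ (Lemma \ref{lem: perturbative}, which in turn needs the $L^4$ Zygmund-type estimate of Lemma \ref{lem-str}, another ingredient absent from your sketch); then $g=f(\cdot+h)-f$ has spectrum in $\Lambda$, vanishes on $\Omega\cap(\Omega-h)$, hence vanishes identically, forcing $(e^{i\lambda\cdot h}-1)\widehat{f}(\lambda)=0$ for all small $h$, so $f\equiv0$, then $c=0$, a contradiction. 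Without either this mechanism or a genuine unique-continuation input, your argument does not close.
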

\begin{remark}
We would like to emphasize that the observable function $a$ in Theorem \ref{thm-spacetime-a} is time-dependent and merely measurable in the time variable. Similar types of observability have been extensively investigated for the case where $a$ is time independent. We refer to \cite{Rosier97,LRZ-2010} for $a\in C(\T)$, and \cite{BurqZworski2019,alphonse2025} for $a\in L^1(\T)$. 
\end{remark}
In particular, when $G\subset [0,T]\times \T$ has positive measure, by taking $a=\mathbf{1}_{G}$  we obtain 
\begin{proposition}\label{thm-1}
Let $G\subset [0,T]\times\T$ be a measurable set with positive measure. Then there exists a constant $C(G)>0$ such that for all solutions to \eqref{equ-1}
\begin{align}\label{equ-ob-intro}
\|u_0\|^2_{L^2(\T)}\leq C(G)\iint_G|u(t,x)|^2\d x \d t.
\end{align} 
\end{proposition}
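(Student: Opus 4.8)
The proof of Proposition \ref{thm-1} is an immediate application of Theorem \ref{thm-spacetime-a} with a specific choice of the observable function $a$. The plan is to reduce the space-time integral over the measurable set $G$ to the form appearing in the lower bound \eqref{equ-55-15-intro}. First, I would set $a(t,x)=\mathbf{1}_{G}(t,x)$, the indicator function of $G$, and verify that this function satisfies the hypotheses of Theorem \ref{thm-spacetime-a}.

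The key verification is twofold. On the one hand, I must check the membership $a\in L^1_x(\T;L^\infty_t([0,T]))$: since $a=\mathbf{1}_G$ takes only the values $0$ and $1$, for each fixed $x$ the map $t\mapsto \mathbf{1}_G(t,x)$ is bounded by $1$ in $L^\infty_t$, so that $\|\mathbf{1}_G(\cdot,x)\|_{L^\infty_t[0,T]}\leq 1$ for every $x$, whence $\|a\|_{L^1_x(\T;L^\infty_t[0,T])}\leq |\T|=2\pi<\infty$. On the other hand, I must confirm the nondegeneracy condition $\|a\|_{L^1_x(\T;L^\infty_t[0,T])}>0$. This follows because $G$ has positive Lebesgue measure in $[0,T]\times\T$: by Fubini's theorem, the set of $x\in\T$ for which the time-slice $\{t:(t,x)\in G\}$ has positive one-dimensional measure must itself have positive measure in $\T$, and for each such $x$ one has $\|\mathbf{1}_G(\cdot,x)\|_{L^\infty_t}=1$; integrating over this set of $x$ yields a strictly positive value.

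With these two facts established, Theorem \ref{thm-spacetime-a} applies directly. Writing $u(t,x)=e^{itP(D)}u_0$ for the solution of \eqref{equ-1}, the inequality \eqref{equ-55-15-intro} becomes
\begin{align*}
\|u_0\|^2_{L^2(\T)}\leq C'\int_0^T\int_\T |\mathbf{1}_G(t,x)\,u(t,x)|^2\d x\d t = C'\iint_G |u(t,x)|^2\d x\d t,
\end{align*}
which is precisely \eqref{equ-ob-intro} with $C(G)=C'$. The final identity uses that $|\mathbf{1}_G|^2=\mathbf{1}_G$ restricts the integration domain to $G$.

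There is no genuine obstacle here, as all the analytic work is packaged into Theorem \ref{thm-spacetime-a}; the only point requiring care is the nondegeneracy argument via Fubini, ensuring that a positive-measure set in the product space does not have all its mass concentrated on null time-slices. I expect this deduction to occupy only a few lines.
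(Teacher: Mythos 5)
Your proposal is circular within the logic of the paper. You deduce Proposition \ref{thm-1} from the lower bound \eqref{equ-55-15-intro} of Theorem \ref{thm-spacetime-a}, but the paper's proof of that very inequality (in Subsection \ref{sec: conclusion}) is itself a reduction \emph{to} Proposition \ref{thm-1}: given a general $a$ with $\|a\|_{L^1_x(\T;L^\infty_t[0,T])}>0$, the paper passes to the super-level set $\Omega_{j_0}=\{(x,t): |a(x,t)|\geq \tfrac{1}{j_0}\}$, which has positive measure, and then invokes Proposition \ref{thm-1} on $\Omega_{j_0}$. So the general theorem is not an independent black box you may cite; Proposition \ref{thm-1} is the place where all the analytic work actually happens, and the introduction's phrasing ("by taking $a=\mathbf{1}_G$ we obtain...") is only a statement of logical compatibility, not of the order of proof. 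Your closing remark that "all the analytic work is packaged into Theorem \ref{thm-spacetime-a}" is precisely backwards: for the lower bound, all the analytic work is packaged into Proposition \ref{thm-1}.

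What is missing is therefore the entire substantive argument: (i) the high-frequency estimate of Lemma \ref{lem-high-freq}, obtained by a Zygmund-type expansion of $|\sum_{|k|>N}a_ke^{i(kx+p(k)t)}|^2$ over $G$, where the off-diagonal terms are controlled by $\widehat{\mathbf{1}_G}(\lambda_{k_2}-\lambda_{k_1})$ and made small by choosing $N$ large via Plancherel; (ii) the stability of this estimate under replacing $G$ by $G\cap(G-h)$ for small $|h|$ (Lemma \ref{lem: perturbative}), which rests on the $L^4$ Strichartz bound of Lemma \ref{lem-str} and the fact that $|G\setminus(G-h)|\to 0$; (iii) the augmented observability lemma (Lemma \ref{lem-add-freq}), a compactness argument showing that one low-frequency point $\lambda\notin\Lambda$ can be adjoined to the observed frequency set; and (iv) the finite induction adding the $2N+1$ low-frequency points $\{(k,p(k)):|k|\leq N\}$ one at a time, re-establishing translation stability at each stage. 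Your Fubini verification that $\mathbf{1}_G$ has positive $L^1_xL^\infty_t$ norm is correct but is the trivial part of the picture; without an independent proof of the observability on the positive-measure set itself, the proposal does not prove the proposition.
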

Very recently, Burq and Zhu obtained the first observability result for Schr\"odinger equation from a space-time measurable set in $\T^m$ in \cite{burq}. We take a different proof strategy, which in particular applies to the KdV equation with the mass conservation constraint. Concerning the proof of Theorem \ref{thm-spacetime-a}, we refer the reader to the detailed presentation in Section \ref{sec: Observability from space-time measurable sets}. In our approach, inspired by  the proof of Miheev's Theorem (see \cite{BonamiUCP2006}), we adapt the classical high/low frequency framework (see \cite{Rosier97} for example) to the setting of space-time measurable observation sets. More precisely, we first derive high-frequency bounds for  \eqref{equ-1} inspired by the work of Zygmund \cite{Zygmund1972}: \footnote{ Throughout, we shall use $A\lesssim B$ to denote $A\leq CB$ for some independent of relevant parameters $C>0$, and $A\gg B$ if $A\geq CB$ for a sufficiently large constant  $C>0$.}
\begin{gather}\label{eq: highest-intro}
\sum_{|k|>N}|\widehat{\varphi}(k)|^2\lesssim\iint_G\Big|\sum_{|k|>N}\widehat{\varphi}(k)e^{i(kx+p(k)t)}\Big|^2\d x \d t, \quad \forall \varphi\in L^2(\T).
    \end{gather}

    In the analysis of the low-frequency regime, rather than following the classical compactness-uniqueness method, we propose a novel method that enables the construction of a finite iterative scheme. This scheme successively incorporates low-frequency modes into the high-frequency estimate, and it mainly relies on the following two ingredients:

1. The observability \eqref{eq: highest-intro} holds uniformly if we replace $G$ by $G\cap (G-h)$ with $h$ sufficiently small.

2.  Establish an {\it augmented} observability inequality. Namely, for any $k_0<N$, we obtain
    \begin{equation}\label{eq: augment-est-intro}
    \sum_{|k|>N, k= k_0}|\widehat{\varphi}(k)|^2\leq C'(G)\iint_{G}\Big|\sum_{|k|>N, k= k_0}\widehat{\varphi}(k)e^{i(kx+p(k)t)}\Big|^2\d x \d t.
    \end{equation}
\begin{remark}
Based on the new observability result established in Theorem \ref{thm-spacetime-a}, we prove a uniformly exponential decay of the $L^2(\mathbb{T})$ norm for solutions to a general dispersive equation with spacetime damping. Specifically, we demonstrate that exponential decay occurs if the spacetime damping coefficient $a(t,x)$ is uniformly time-block precompact and bounded below by a positive constant; see Theorem \ref{thm-decay}. In particular, if $a(t,x)$ is periodic in $t$ (with period, say, $T$) and attains a positive lower bound on a subset of $[0,T] \times \mathbb{T}$ with positive measure, then an exponential decay of the solutions holds.

\end{remark}
\subsubsection{Step 2: Construct the control operator for the linearized KdV equation}
Employing the ideas of Hilbert uniqueness method, we construct the control operator for the linearized KdV equation $\partial_tu+\partial_x^3u=\mathcal{L}(h)\mathbf{1}_{E_T\times F}$ based on the desired observability in the form:
\begin{equation}\label{eq: mass-kdv-ob-intro}
\|\varphi\|^2_{L^2(\T)}\lesssim \iint_{E_T\times F}|\mathcal{L}(e^{-t\partial_x^3}\varphi)|^2\d x\d t.
\end{equation}
Due to the presence of the operator $\mathcal{L}$ on the right-hand side, the term $\mathcal{L}(e^{-t\partial_x^3}\varphi)$ alters the frequency components of the solution $e^{-t\partial_x^3}\varphi$. This distinguishes the mass-conserved linearized KdV equation from the general dispersive models considered previously. We adapt the preceding method to this mass-conserved setting and obtain the observability \eqref{eq: mass-kdv-ob-intro}.

In the same spirit, we first derive a high-frequency estimate as follows:
\begin{equation}\label{eq: kdv-high-est-intro}
\sum_{|k|>N} |\widehat{\varphi}(k)|^2
 \lesssim\int_{\T}\int_{E_T}|\sum_{|k|>N}\sum_{l\in\Z} e^{i(k^3,  l)(t, x)} L(k, l)\widehat{\varphi}(k)|^2\d t\d x,    
\end{equation}
where $L(k,l)$ is the coefficients of the matrix representation of the linear operator $\mathcal{L}$ under the $L^2$-basis $\{e^{ikx}\}_{k\in\Z}$ (see Lemma \ref{lem: 0-avg-op}). Then we show that \eqref{eq: kdv-high-est-intro} holds uniformly when replacing $E_T\cap (E_T-h)$. To add the low-frequency part successively, it suffices to prove the augmented observability: for $k_0\in [-N,N]$, 
\begin{align*}
\sum_{|k|>N, k=k_0 } |\widehat{\varphi}(k)|^2
 \leq C'(E_T,F) \int_{\T}\int_{E_T}|\sum_{|k|>N, k=k_0}\sum_{l\in\Z}e^{i(k^3,  l)(t, x)} L(k, l)\widehat{\varphi}(k)|^2\d t\d x.
\end{align*}
We conclude \eqref{eq: mass-kdv-ob-intro} based on a finite induction process. 

\begin{figure}[htp] 
    \centering
    \includegraphics[width=0.8\linewidth]{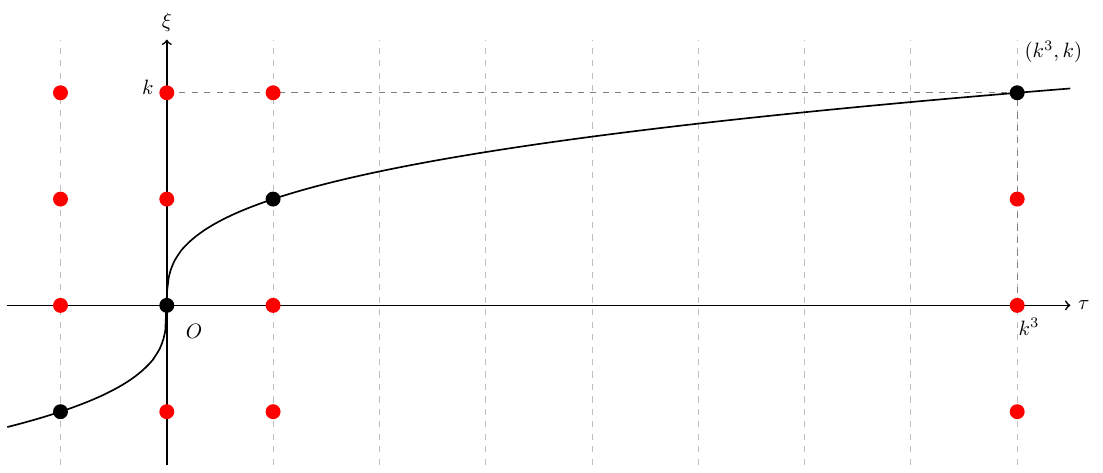}
    \caption{The frequency analysis for KdV control. In the classical linear KdV equation, only the frequencies $\{(k^3,k):k\in \mathbb{Z}\}$ play a role (see the black points). However, for the KdV equation with the mass conservation constraint, we need to analyze the frequencies $\{(k^3,l):k,l\in \mathbb{Z}\}$. As illustrated in the figure, infinitely many new frequencies come into play for each fixed $k$ (see the red points).}
    \label{fig:frequ} 
\end{figure}

Compared to the general approach presented in Section~\ref{sec: Observability from space-time measurable sets}, the mass-conserved KdV case presents additional features. As noted earlier, the mass conservation constraint alters the frequency structure of the adjoint solution. Consequently, the frequency of $\mathcal{L}(e^{-t\partial_x^3}\varphi)$ is no longer concentrated at the integer points on the curve $\tau=\xi^3$, but instead spreads across the entire frequency domain in $\xi$-direction, see Fig \ref{fig:frequ}. This new phenomenon requires a careful analysis of the operator $\mathcal{L}$, which is carried out in Section~\ref{sec: property of L}.  

\subsubsection{Step 3: nonlinear case}
This step is based on  a standard fixed-point argument. For the self-containment, we include some basic properties of Bourgain spaces and useful estimates. Equipped with them, we finally prove the local exact controllability of the nonlinear mass-conserved KdV equation.

\subsection{Novel observability inequality: beyond classical methods}\label{sec: new method}
In this subsection, we compare our method with the classical methods, in particular the moment method and the compactness-uniqueness method. 
\subsubsection{Beyond the moment method}
For interested readers, we give a brief review of the moment method in Appendix \ref{sec: moment method} (see also  \cite{FR71,TT-2007,lissy-2015}). The central aspect is the construction of a bi-orthonormal family $\{\phi_k\}$ to $\{e^{ik^3t}\}$ satisfying  $\int_0^T\phi_ke^{-il^3t}\d t=\delta_{kl}$. The classical construction relies on the Paley–Wiener Theorem. In analogue, in our setting, one needs a bi-orthonormal family  $\{\phi_k\}_k$ to $\{e^{i l^3t}\}_l$ satisfying the orthogonality condition 
\[
\int_{E_T}\phi_ke^{-il^3t}\d t=\delta_{kl},
\]
where $E_T$ is a positively measurable set. However, under this circumstance, the classical construction does not work.  

In fact, the moment method is a very powerful tool to ensure the controllability of many PDE models (Schr\"odinger \cite{BL10}, wave \cite{Russell-67}, heat \cite{FR71}). It is likely that our method could apply to these settings and one can obtain a more general result with control on a space-time measurable set.

\subsubsection{Beyond the compactness-uniqueness method}
For self-containment, we include a brief review on the classical compactness-uniqueness method in Appendix \ref{sec: compant-unique method}, see also \cite{sun-kp,RS-kp2}. Classically, we obtain the observability inequality via the Ingham inequality or microlocal methods. In the application of Ingham's inequality, replacing the time interval $[0,T]$ by a time measurable set is a hard task. As for the microlocal method, it is based on symbolic calculus, and its application usually requires sufficient regularity assumptions (for instance, $C^2$).

To move beyond the compactness-uniqueness method, one can either construct a carefully designed approximation scheme and establish associated uniform estimates to pass the limit, or focus more directly on the analysis of trigonometric series. As previously mentioned, a notable recent development in this direction is due to Burq and Zhu, who introduced an elegant approximation scheme to establish the first observability result from measurable sets for the Schrödinger equation on tori \cite{burq}.  Alternatively, our approach, outlined in detail in Section \ref{sec: new method}, relies on a refined analysis of the trigonometric series that allows us to deal with the difficulty caused by the measurable sets.

\subsection*{Acknowledgements}
The authors would like to thank Nicolas Burq and Chenmin Sun for valuable and useful discussions.    Ming Wang was partially supported by the National Natural Science Foundation of China under grants 12171442 and 12171178. Shengquan Xiang is partially supported by NSF of China 12301562, and by “The Fundamental Research Funds for the Central Universities, 7100604200, Peking University”. Jingrui Niu is supported by Defi Inria EQIP.

\section{Observability from space-time measurable sets}\label{sec: Observability from space-time measurable sets}
In this section, we study the general dispersive equation 
\[
\partial_t u=i P(D) u, \quad u(0, x)=u_0(x),
\]
where $\widehat{P(D) u}(k):=p(k) \widehat{u}(k)$, $\forall k \in \mathbb{Z}$.
Here $\widehat{u}(k)$ stands for the $k$-th Fourier coefficient of $u$ defined as
$
\widehat{u}(k)=\frac{1}{2 \pi} \int_0^{2 \pi} u(x) e^{-i k x} \mathrm{~d} x.
$
We deal with the observability inequalities associated with the general dispersive equation \eqref{equ-1}, particularly focusing on the space-time measurable set $G\subset[0,T]\times\T$ with positive measure. The primary goal of this section is to prove Theorem \ref{thm-spacetime-a}, which is structured in the following four steps.

$\bullet$\textbf{ Step 1: }Strichartz estimates. We establish  in subsection \ref{sec: Strichartz} that
\begin{gather*}
\|e^{itP(D)}u_0\|_{L^\infty_x(\mathbb{T}; L^2_t([0,T]))} \lesssim \|u_0\|_{L^2(\mathbb{T})},\quad 
\|e^{itP(D)}u_0\|_{L^4(\T^2)}\lesssim \|u_0\|_{L^2(\T)}.
\end{gather*}
We begin by proving the first estimate in Lemma \ref{lem-Stri-LinftyL2}, which corresponds to the establishment of \eqref{equ-55-14}. The latter estimate, in turn, serves as a preliminary step toward the development of the low-frequency analysis in Step 3. 

$\bullet$\textbf{ Step 2: }High-frequency estimates. We prove a high-frequency estimate in Lemma \ref{lem-high-freq} of subsection \ref{sec: high-fre-est}:  
\begin{equation*}
\frac{|G|}{2}\sum_{|k|>N, k\in \Z}|a_k|^2\leq \iint_G\Big|\sum_{|k|>N, k\in \Z}a_ke^{i(kx+p(k)t)}\Big|^2\d x \d t, \quad \forall \{a_k\}\in l^2.
\end{equation*} 

$\bullet$\textbf{ Step 3: }Low-frequency analysis. This step represents the core of our new strategy. Rather than relying on the classical unique continuation property, we carry out the low-frequency analysis by exploiting the stability of the high-frequency estimate under perturbations of the measurable observed set. More precisely, we prove that for a fixed high-frequency set $\Lambda=\{(k,p(k)):|k|>N\}\subset\Z^2$, we have
\begin{equation}\label{eq: stable-high-est}
\sum_{(k,p(k))\in\Lambda}|a_k|^2\leq C(G)\iint_{G\cap (G-h)}\Big|\sum_{(k,p(k))\in\Lambda}a_ke^{i(k x+p(k)t)}\Big|^2\d x \d t, \quad \forall \{a_k\}\in l^2,
\end{equation}
which implies that the high-frequency estimate remains stable under small translations of the set $G$, where $G-h:=\{z-h: z=(x,t)\in G\}$ for $|h|$ sufficiently small.

Under the condition \eqref{eq: stable-high-est}, we can incorporate a low-frequency point $\lambda\notin\Lambda$ such that the following augmented frequency observability holds  
\begin{equation*}
\sum_{(k,p(k))\in\Lambda\cup\{\lambda\}}|a_k|^2\leq C'(G)\iint_{G}\Big|\sum_{(k,p(k))\in\Lambda\cup\{\lambda\}}a_ke^{i(kx+p(k)t)}\Big|^2\d x \d t, \quad \forall \{a_k\}\in l^2,
\end{equation*}
By repeating the translation uniform argument for high-frequency estimates, we deduce that inequality \eqref{eq: stable-high-est} also holds for the updated set $\Lambda\cup\{\lambda\}$. This allows us to add another low-frequency point. By iterating this procedure a finite number of times, since the number of low-frequency points is finite, we complete the proof of Proposition \ref{thm-1}.

$\bullet$\textbf{ Step 4: } From the indicator function to the general case. We prove that for any $a\in L^{1}_x(\T;L^{\infty}_t([0,T]))$
\[
\|u_0\|^2_{L^2(\T)}\leq  C'\int_0^T\int_\T |a(t,x)e^{itP(D)}u_0|^2\d x \d t, \quad \forall u_0\in L^2(\T),  
\]
by approximation and contradiction arguments.

\subsection{Strichartz estimates}\label{sec: Strichartz}
In this part we prove two estimates; see Lemma \ref{lem-Stri-LinftyL2}--\ref{lem-str}.
\begin{lemma}\label{lem-Stri-LinftyL2}
Let $T>0$. Then, there exists a constant $C_{\rm{Str}}=C_{\rm{Str}}(d,T)>0$ such that
\begin{align}\label{equ-55-3}
\|e^{itP(D)}u_0\|_{L_x^\infty(\T;L^2_t([0,T]))} \leq C_{\rm{Str}} \|u_0\|_{L^2(\T)}, \quad \forall u_0\in L^2(\T).
\end{align}
\end{lemma}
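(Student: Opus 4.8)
The plan is to reduce the estimate to a uniform (in $x$) Bessel inequality for the system $\{e^{ip(k)t}\}_{k\in\Z}$ in $L^2([0,T])$. Writing $e^{itP(D)}u_0(x)=\sum_{k\in\Z}\widehat{u_0}(k)e^{ikx}e^{ip(k)t}$ and fixing $x$, I set $c_k:=\widehat{u_0}(k)e^{ikx}$, so that $\sum_k|c_k|^2=\sum_k|\widehat{u_0}(k)|^2$ is independent of $x$ and comparable to $\|u_0\|_{L^2(\T)}^2$ by Parseval. Thus \eqref{equ-55-3} follows once I prove the uniform bound $\int_0^T\big|\sum_k c_k e^{ip(k)t}\big|^2\d t\leq C_d\sum_k|c_k|^2$ for all finitely supported sequences $\{c_k\}$; the general case then follows by density, the uniformity of the constant allowing passage to the limit.

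First I would exploit the arithmetic structure of the phases. Since $p\in\Z[x]$, the values $p(k)$ are integers, and each value is attained by at most $d$ integers $k$ (as $p-\mu$ has at most $d$ roots). Grouping the indices by the distinct attained values $\{\mu_j\}\subset\Z$, with fibers $S_j=\{k:p(k)=\mu_j\}$ satisfying $|S_j|\leq d$, and setting $C_j:=\sum_{k\in S_j}c_k$, I obtain $\sum_k c_k e^{ip(k)t}=\sum_j C_j e^{i\mu_j t}$ together with $\sum_j|C_j|^2\leq d\sum_k|c_k|^2$ by Cauchy--Schwarz. The point of this grouping is that the surviving frequencies $\{\mu_j\}$ are distinct integers, hence $1$-separated.

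The key step is to control $\int_0^T\big|\sum_j C_j e^{i\mu_j t}\big|^2\d t$. Expanding the square produces the quadratic form $\sum_{j,j'}C_j\overline{C_{j'}}\int_0^T e^{i(\mu_j-\mu_{j'})t}\d t$, but the sharp cutoff kernel $\int_0^T e^{ist}\d t=O(1/|s|)$ decays too slowly: a direct Schur test over the integer gaps $\mu_j-\mu_{j'}$ would diverge logarithmically, since $\sum_{n}1/|n|=\infty$. To fix this I replace $\mathbf{1}_{[0,T]}$ by a smooth nonnegative majorant with rapidly decaying Fourier transform, e.g. a Gaussian $h(t)=e^{c}e^{-a(t-T/2)^2}$ with $a,c>0$ chosen so that $h\geq\mathbf{1}_{[0,T]}$. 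Then $\int_0^T|\cdot|^2\leq\int_\R h\,|\cdot|^2=\sum_{j,j'}C_j\overline{C_{j'}}\,\widehat h(\mu_j-\mu_{j'})$, and since the $\mu_j$ are distinct integers and $\widehat h$ is Schwartz, Schur's test gives $\sup_j\sum_{j'}|\widehat h(\mu_j-\mu_{j'})|\leq\sum_{n\in\Z}|\widehat h(n)|<\infty$. This bounds the form by $C(T,d)\sum_j|C_j|^2$.

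Combining the three steps yields $\int_0^T|e^{itP(D)}u_0(x)|^2\d t\leq C(T,d)\sum_k|\widehat{u_0}(k)|^2$ uniformly in $x$; taking the supremum over $x$ then gives \eqref{equ-55-3}. The main obstacle is precisely the failure of the naive almost-orthogonality estimate caused by the slow $1/|s|$ decay of the sharp time cutoff, and the remedy is the smooth, rapidly decaying majorant combined with the arithmetic ($1$-)separation of the phases $p(k)$, which is exactly where the integer-coefficient hypothesis on $p$ enters. I would note in passing that only a positive separation of the distinct values $\{p(k)\}$ is actually used, so the argument is robust to relaxing the integrality of the coefficients.
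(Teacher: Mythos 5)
Your proof is correct, and it takes a genuinely different route from the paper at the key step. Both arguments begin the same way (Fourier expansion, fixing $x$, and absorbing the degenerate pairs $p(k)=p(l)$ via the fact that each fiber of $p$ has at most $d$ elements), but they diverge on the off-diagonal terms. The paper computes the sharp-cutoff kernel $\int_0^T e^{i(p(k)-p(l))t}\,\mathrm{d}t$ explicitly and runs a Schur test \emph{in the original index} $(k,l)$, where convergence is rescued not by the $1$-separation of the values but by the polynomial growth $|p(k)-p(l)|\gtrsim |k-l|\,(|k|^{d-1}+|l|^{d-1})$ (for $d$ odd; an analogous bound with $|k^2-l^2|$ for $d$ even), at the cost of a case analysis over the parity of $d$ and over low/high frequency ranges. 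You instead group indices by the distinct values $\mu_j$ of $p$ and prove a large-sieve--type Bessel inequality for $1$-separated integer frequencies, replacing $\mathbf{1}_{[0,T]}$ by a Gaussian majorant so that the off-diagonal kernel $\widehat{h}(\mu_j-\mu_{j'})$ becomes summable; your observation that the naive Schur test on the sharp cutoff diverges logarithmically \emph{after} grouping is accurate, and is precisely why the majorant is needed in your formulation (whereas the paper never faces this issue because it keeps the polynomial structure in play). Your route is shorter, avoids the odd/even dichotomy entirely, and as you note only uses a uniform positive separation of the distinct values $\{p(k)\}$, so it generalizes beyond $p\in\Z[x]$; the paper's more hands-on estimates are not wasted, however, since the same polynomial-growth bounds are reused later (e.g.\ in the proof of Lemma \ref{lem-high-freq}, where the separation $|\lambda_{k_2}-\lambda_{k_1}|>N$ is what drives the high-frequency estimate), so the two approaches are complementary rather than redundant within the paper's architecture.
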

\begin{proof}
We expand the initial state $u_0$ into the Fourier series, i.e., $u_0=\sum_{k\in \Z}c_ke^{ikx}$. Then, the solution has the form $e^{itP(D)}u_0=\sum_{k\in \Z}c_ke^{itp(k)+ikx}$ and the left-hand side of \eqref{equ-55-3} reads as
\begin{align}\label{equ-55-4}
\|e^{itP(D)}u_0\|^2_{L_x^\infty(\T;L^2_t([0,T]))}    
=\sup_{x\in \T}\sum_{k,l\in \Z}\int_0^Tc_k\overline{c_l}e^{it(p(k)-p(l))}e^{ix(k-l)}\d t. 
\end{align}
To bound \eqref{equ-55-4}, we split the sum into two parts
\begin{gather*}
I_1:=\sum_{k,l\in \Z, p(k)=p(l)}\int_0^Tc_k\overline{c_l}e^{ix(k-l)}\d t,\;\;
I_2:=\sum_{k,l,p(k)\neq p(l)}\int_0^Tc_k\overline{c_l}e^{it(p(k)-p(l))}e^{ix(k-l)}\d t.
\end{gather*}
The contribution of the first part is bounded by
\begin{align*}
|I_1|\leq \sup_{x\in \T}\sum_{k,l\in \Z, p(k)=p(l)}|c_k\overline{c_l}|\d t\leq T\sum_{k,l\in \Z}A_{k,l}|c_k\overline{c_l}|,
\end{align*}
where $A_{k,l}=1_{\{(k,l)\in \Z^2: p(k)=p(l)\}}(k,l)$. By the algebra fundamental theorem, for every $k\in \Z$, there are at most $d$ elements $l$ such that $p(l)=p(k)$. Thus,
we have
\begin{align}\label{equ-55-6}
\sup_{k\in \Z}\sum_{l\in \Z}A^2_{k,l}\leq d, \quad \sup_{l\in \Z}\sum_{k\in \Z}A^2_{k,l}\leq d.
\end{align}
Using \eqref{equ-55-6} and the Cauchy–Schwarz inequality, we get
\begin{align}\label{equ-55-7}
|I_1| \leq \frac{T}{2}\left( \sum_{k,l\in\Z} |c_k|^2 A_{k,l}+\sum_{k,l\in\Z} |c_l|^2 A_{k,l}\right)\leq dT\sum_{k\in\Z}|c_k|^2.
\end{align}

The contribution of the second part is bounded by
\begin{align*}
|I_2|\leq \left|\sup_{x\in \T}\sum_{k,l\in \Z,p(k)\neq p(l)} c_k\overline{c_l} e^{ix(k-l)} \frac{e^{iT(p(k)-p(l))}-1}{i(p(k)-p(l))}  \right| \lesssim \sum_{k,l} \frac{|c_k\overline{c_l}|}{\langle p(k)-p(l)\rangle},  
\end{align*}
where we used the notation $\langle x\rangle = 1+|x|$ and the lower bound $|p(k)-p(l)|\geq 1$ if $p(k)\neq p(l)$. 
The latter follows from the fact that $p$ only takes values in $\Z$. We claim that
\begin{align}\label{equ-55-9} 
\sum_{k,l} \frac{|c_k\overline{c_l}|}{\langle p(k)-p(l)\rangle}\lesssim \sum_{k\in \Z}|c_k|^2.    
\end{align}
Once this is done, $|I_2|\lesssim \sum_{k\in \Z}|c_k|^2$. Combining with \eqref{equ-55-7}, we complete the proof of \eqref{equ-55-3}. Hence, it remains to prove \eqref{equ-55-9}. To this end, we split the sum into four terms as
$$
\sum_{k,l} \frac{|c_k\overline{c_l}|}{\langle p(k)-p(l)\rangle}
=(\sum_{|k|,|l|\lesssim 1}+\sum_{|k| \lesssim 1, |l|\gg 1}+\sum_{|k|\gg 1,|l|\lesssim 1} +\sum_{|k|,|l|\gg 1}) \frac{|c_k\overline{c_l}|}{\langle p(k)-p(l)\rangle}. 
$$
We estimate term by term. For the first term, we have
\begin{align}\label{equ-55-10} 
 \sum_{|k|,|l|\lesssim 1}  \frac{|c_k\overline{c_l}|}{\langle p(k)-p(l)\rangle} \leq \sum_{|k|,|l|\lesssim 1}   |c_k\overline{c_l}|\lesssim (\sum_{|k|,|l|\lesssim 1}   |c_k\overline{c_l}|^2 )^{1/2}=\sum_{k\in \Z}|c_k|^2.
\end{align}
For the second term, if $|k| \lesssim 1, |l|\gg 1$, then the leading term of $|p(k)-p(l)|$ is $|l|^d$, thus by Cauchy–Schwarz inequality,
\begin{align}\label{equ-55-11} 
 \sum_{|k|\lesssim 1,|l|\gg1}  \frac{|c_k\overline{c_l}|}{\langle p(k)-p(l)\rangle} &\leq \sum_{|k|\lesssim 1,|l|\gg 1}   \frac{|c_k\overline{c_l}|}{(1+|l|^d) }\leq \sum_{|k|\lesssim 1}|c_k|(\sum_{|l|\gg 1}\frac{1}{(1+|l|^d)^2})^{1/2}(\sum_{|l|\gg 1}|c_l|^2)^{1/2}\nonumber\\
 &\lesssim \sum_{|k|\lesssim 1}|c_k|(\sum_{|l|\gg 1}|c_l|^2)^{1/2}\lesssim \sum_{k\in \Z}|c_k|^2.
\end{align}
Similarly, for the third term,
\begin{align}\label{equ-55-12} 
\sum_{|k|\gg 1,|l|\lesssim 1}  \frac{|c_k\overline{c_l}|}{\langle p(k)-p(l)\rangle} \lesssim \sum_{k\in \Z}|c_k|^2.    
\end{align}
For the last term, we claim that
\begin{align}\label{equ-55-13} 
\sum_{|k|,|l|\gg 1}  \frac{|c_k\overline{c_l}|}{\langle p(k)-p(l)\rangle} \lesssim \sum_{k\in \Z}|c_k|^2.    
\end{align}
Combining the bounds \eqref{equ-55-10}-\eqref{equ-55-13}, we conclude \eqref{equ-55-9}.

It remains to show the claim \eqref{equ-55-13}. First, we consider the case when the degree $d$ of $p$ is odd. Split further the sum in \eqref{equ-55-13} as two terms
$$
\sum_{|k|,|l|\gg 1}  \frac{|c_k\overline{c_l}|}{\langle p(k)-p(l)\rangle}=
\sum_{|k|,|l|\gg 1, k=l} \frac{|c_k\overline{c_l}|}{\langle p(k)-p(l)\rangle}+ \sum_{|k|,|l|\gg 1, k\neq l} \frac{|c_k\overline{c_l}|}{\langle p(k)-p(l)\rangle}.
$$
The first term is clearly bounded by $\sum_{k\in \Z}|c_k|^2$. For the second term, if $d$ is odd and $k\neq l$, then the leading term of $\langle p(k)-p(l)\rangle$ is $|l-k|(l^{d-1}+k^{d-1})$, thus
\begin{align*}
\sum_{|k|,|l|\gg 1,l\neq k}  \frac{|c_k\overline{c_l}|}{\langle p(k)-p(l)\rangle}\lesssim    \sum_{|k|,|l|\gg 1,l\neq k}  \frac{|c_k\overline{c_l}|}{|l-k|(l^{d-1}+k^{d-1})}\lesssim  \sum_{k\in \Z}|c_k|^2,
\end{align*}
where in the last step we used Cauchy–Schwarz and $\sum_{|k|,|l|\gg 1,k\neq l}\frac{1}{|l-k|^2(l^{d-1}+k^{d-1})^2}\lesssim 1$.
This proves \eqref{equ-55-13} if $d$ is odd.

Now, we consider the other case when $d$ is even. Split further the sum in \eqref{equ-55-13} as two terms
$$
\sum_{|k|,|l|\gg 1}  \frac{|c_k\overline{c_l}|}{\langle p(k)-p(l)\rangle}=
\sum_{|k|,|l|\gg 1, l^2=k^2}\frac{|c_k\overline{c_l}|}{\langle p(k)-p(l)\rangle}+ \sum_{|k|,|l|\gg 1, l^2\neq k^2}\frac{|c_k\overline{c_l}|}{\langle p(k)-p(l)\rangle}.
$$
For the first term, we have
\begin{align*}
\sum_{|k|\gg 1,k^2=l^2}  \frac{|c_k\overline{c_l}|}{\langle p(k)-p(l)\rangle} \leq \sum_{|k| \gg 1,l=\pm k}   |c_k\overline{c_l}|   \lesssim  \sum_{k\in \Z}|c_k|^2.  
\end{align*}
For the second term, if $d$ is even and $k^2\neq l^2, |k|,|l|\gg 1$, then
$$
|k^d-l^d|\gtrsim |l^2-k^2|(l^{d-2}+k^{d-2}),
$$
which is the leading term of $\langle p(k)-p(l)\rangle$. Thus
\begin{align*}
\sum_{|k|\gg 1,k^2\neq l^2}  \frac{|c_k\overline{c_l}|}{\langle p(k)-p(l)\rangle} \leq \sum_{|k|,|l| \gg 1,k^2\neq l^2} \frac{|c_k\overline{c_l}|}{|l^2-k^2|(l^{d-2}+k^{d-2})}     \lesssim  \sum_{k\in \Z}|c_k|^2,  
\end{align*}
where we used the fact
$$
\sum_{|k|,|l| \gg 1,k^2\neq l^2} \frac{1}{|l^2-k^2|^2(l^{d-2}+k^{d-2})^2}\leq \sum_{|k|,|l| \gg 1,k^2\neq l^2} \frac{1}{|l^2-k^2|^2 }\lesssim 1.
$$
So \eqref{equ-55-13} also holds if $d$ is even. This completes the proof.
\end{proof}
The next lemma presents the proof of $\|e^{itP(D)}u_0\|_{L^4(\T^2)}\lesssim \|u_0\|_{L^2(\T)}$. The proof exploits the orthogonality of triangle polynomials and some arithmetic properties of the polynomial $p$.
\begin{lemma}\label{lem-str}
There is a constant $C_{\rm{Zyg}}=C_{\rm{Zyg}}(d)>0$ such that
\begin{align}\label{equ-G-2}
\left\|\sum_{k\in \Z}a_ke^{i(kx+p(k)t)}\right\|_{L^4(\T^2)}\leq C_{\rm{Zyg}}\left( \sum_{k\in \Z}|a_k|^2\right)^{1/2}.
\end{align}
\end{lemma}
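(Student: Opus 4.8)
The plan is to reduce the $L^4$ bound to an $L^2$ estimate for the square of $u(t,x):=\sum_{k\in\Z}a_ke^{i(kx+p(k)t)}$, exploiting $\|u\|_{L^4(\T^2)}^4=\|u^2\|_{L^2(\T^2)}^2$, and then to replace the geometric/oscillatory input by a purely arithmetic counting estimate. Expanding the product gives $u^2=\sum_{k,k'}a_ka_{k'}e^{i((k+k')x+(p(k)+p(k'))t)}$, so grouping by the output frequency $(m,n)\in\Z^2$ and applying Parseval on $\T^2$ yields
\[
\|u^2\|_{L^2(\T^2)}^2=(2\pi)^2\sum_{(m,n)\in\Z^2}|b_{m,n}|^2,\qquad b_{m,n}:=\sum_{\substack{k+k'=m\\ p(k)+p(k')=n}}a_ka_{k'}.
\]
Thus the entire estimate reduces to bounding $\sum_{m,n}|b_{m,n}|^2$ by a dimensional constant times $\big(\sum_k|a_k|^2\big)^2=\|a\|_{\ell^2}^4$.

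The core is an arithmetic analysis of the fibers. For each fixed $m$ I will set $q_m(k):=p(k)+p(m-k)$, a polynomial in $k$ of degree at most $d$, so that $b_{m,n}=\sum_{k:\,q_m(k)=n}a_ka_{m-k}$. The plan is to split the sum over $m$ according to whether $q_m$ is constant. When $q_m$ is non-constant, each level set $\{k:q_m(k)=n\}$ has at most $\deg q_m\le d$ elements; Cauchy--Schwarz then gives $|b_{m,n}|^2\le d\sum_{k:\,q_m(k)=n}|a_k|^2|a_{m-k}|^2$, and summing over $n$ and over all such $m$ produces at most $d\sum_{k,k'}|a_k|^2|a_{k'}|^2=d\,\|a\|_{\ell^2}^4$.

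The delicate point, where the parity of $d$ enters, is the set of $m$ for which $q_m$ degenerates to a constant (the source of infinite multiplicity). I will observe that $q_m$ is constant iff $q_m'(k)=p'(k)-p'(m-k)\equiv0$, i.e.\ $p'(k)=p'(m-k)$; since $p'$ has degree $d-1\ge1$ it is unbounded, hence non-periodic, so this symmetry can hold for \emph{at most one} value of $m$ (two distinct such values would force $p'(s)=p'(s+c)$ with $c\neq0$, i.e.\ periodicity). For $d$ even the leading term of $q_m$ is $2k^d$ and $q_m$ is never constant, so there is no exceptional frequency at all; for $d$ odd the top-degree terms cancel and a single exceptional $m_\ast$ may occur (for instance $m_\ast=0$ when $p$ is odd, as for the KdV symbol $p(k)=k^3$). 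At this at most one exceptional value, all pairs $(k,m_\ast-k)$ collapse onto the single output frequency $(m_\ast,c)$ with $c:=q_{m_\ast}$, so its total contribution is the single term
\[
|b_{m_\ast,c}|^2=\Big|\sum_{k\in\Z}a_ka_{m_\ast-k}\Big|^2\le\|a\|_{\ell^2}^4,
\]
again by Cauchy--Schwarz. Combining the two regimes gives $\sum_{m,n}|b_{m,n}|^2\le(d+1)\|a\|_{\ell^2}^4$, hence $\|u\|_{L^4(\T^2)}^4\lesssim\|a\|_{\ell^2}^4$ with $C_{\rm{Zyg}}$ depending only on $d$.

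I expect the main obstacle to be exactly this exceptional frequency: the naive ``bounded multiplicity everywhere'' argument fails precisely on the diagonal-type set where $q_m$ is constant and the number of representations becomes infinite, and the whole point is that there the degeneracy concentrates the entire contribution into one Fourier mode of $u^2$, which Cauchy--Schwarz controls directly. The only genuinely arithmetic steps are the degree count $\deg q_m\le d$ together with the finite-fiber bound, and the claim that $q_m$ is constant for at most one $m$; everything else is Plancherel and Cauchy--Schwarz bookkeeping.
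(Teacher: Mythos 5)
Your proof is correct, and it takes a route that differs from the paper's in a meaningful way, although the skeleton is the same Zygmund-type argument: reduce the $L^4$ bound to an $L^2$ bound of a product, apply Plancherel on $\T^2$, and control multiplicities of representation by the fundamental theorem of algebra together with Cauchy--Schwarz. The difference lies in the decomposition. You expand the square $u^2$, so the relevant frequency map is the \emph{sum} $(k,k')\mapsto (k+k',\,p(k)+p(k'))$, whereas the paper expands the modulus squared $u\bar u$, so the relevant map is the \emph{difference} $\lambda_{k_1}-\lambda_{k_2}$ with $\lambda_k=(k,p(k))$. This choice has a real consequence. In the difference version, the diagonal $k_1=k_2$ is exactly the main term $\sum_k|a_k|^2$, and every off-diagonal fiber is automatically finite: $k_1-k_2=\alpha_1\neq 0$ forces $p(k_1)-p(k_1-\alpha_1)$ to be a polynomial of exact degree $d-1$ (leading coefficient $d\alpha_1\neq 0$), so the multiplicity constant is at most $d-1$ with no exceptional cases and no parity discussion. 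In your sum version, an infinite fiber can genuinely occur: for odd $d$ the top-degree terms of $q_m(k)=p(k)+p(m-k)$ cancel, and for the KdV symbol $p(k)=k^3$ one indeed has $q_0\equiv 0$. Hence your extra lemma --- that $q_m$ can be constant for at most one $m$, proved via the non-periodicity of the unbounded polynomial $p'$ --- is not a technicality but a necessary ingredient of your route, and you handle it correctly by observing that the entire degenerate fiber collapses onto a single Fourier mode of $u^2$, whose coefficient is bounded by $\|a\|_{\ell^2}^2$ via Cauchy--Schwarz. What the paper's route buys is that this degeneracy never arises; what your route buys is a clean convolution-type identity $\|u^2\|_{L^2(\T^2)}^2=(2\pi)^2\sum_{m,n}|b_{m,n}|^2$ with no leftover cross terms, at the cost of the case analysis on $q_m$. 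Both arguments yield a constant $C_{\mathrm{Zyg}}$ depending only on $d$. The only step you leave implicit --- justifying the formal rearrangement and Plancherel identity for general $\{a_k\}\in\ell^2$ rather than finitely supported sequences and passing to the limit by density --- is at the same level of rigor as the paper's own computation and is standard.
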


\begin{proof}
Let $\lambda_k=(k,p(k))$, and $f=\sum_{k\in \Z}a_ke^{i\langle \lambda_k, (x,t)\rangle}$. Here $\langle \cdot, \rangle$ denotes the scalar product in $\R^2$. Then, we have
\begin{align}\label{equ-G-3}
 f\overline{f}=\sum_{k\in \Z}|a_k|^2+\sum_{k_1\neq k_2}a_{k_1}\overline{a}_{k_2}e^{i\langle \lambda_{k_1}-\lambda_{k_2}, (x,t)\rangle}.
\end{align}
Since $p\in \Z[x]$, we deduce that $\lambda_k\in \Z^2$. Hence, taking $L^2(\T^2)$ on both sides of \eqref{equ-G-3} and using the orthogonality of $e^{i\langle \lambda_k, (x,t)\rangle}$ in $L^2(\T^2)$, we obtain
\begin{align}\label{equ-G-4}
\|f\overline{f}\|_{L^2(\T^2)}\leq 2\pi\sum_{k\in \Z}|a_k|^2+\left(\Theta \sum_{k_1\neq k_2}|a_{k_1}\overline{a}_{k_2}|^2\right)^{1/2},
\end{align}
where $\Theta$ is a quantity defined by
 \begin{align}\label{equ-G-5}
\Theta=\sup_{\alpha\in \Z^2\backslash \{0\}}\# \Big\{(k_1,k_2)\in \Z^2: \lambda_{k_1}-\lambda_{k_2}=\alpha\Big\}.
\end{align}
Since $\|f\|^2_{L^4(\T^2)}=\|f\overline{f}\|_{L^2(\T^2)}$, using the Cauchy–Schwarz inequality, we deduce from \eqref{equ-G-4} that
$$
\|f\|^2_{L^4(\T^2)}\leq (2\pi +\Theta^{1/2})\sum_{k\in \Z}|a_k|^2.
$$
Thus the Strichartz estimate \eqref{equ-G-2} follows if one can show 
\begin{align}\label{equ-G-00}
    \Theta\leq d-1.
\end{align}

To see this, arbitrarily fix $\alpha=(\alpha_1,\alpha_2)\in \Z^2\backslash \{0\}$. By \eqref{equ-G-5}, $\Theta$ is the number of solutions $(k_1,k_2)\in \Z^2$ to the equation   $\lambda_{k_1}-\lambda_{k_2}=\alpha$, which is equivalent to
\begin{align}
k_1 - k_2 = \alpha_1,  \label{equ-G-6}\\
p(k_1) - p(k_2) = \alpha_2.  \label{equ-G-7}
\end{align}
Let $p(k)=k^d+\mbox{l.o.t}$ where $\mbox{l.o.t}$ denotes the lower order terms. Then
\begin{align}\label{equ-G-8}
   p(k_1) - p(k_2)=(k_1-k_2)(k_1^{d-1}+k_1^{d-2}k_2+\cdots+k_2^{d-1})+\mbox{l.o.t}.
\end{align}
Substituting $k_2=k_1-\alpha_1$ (by \eqref{equ-G-6}) into \eqref{equ-G-8}, we see $p(k_1) - p(k_2)$ is a polynomials of $k_1$ with degree $d-1$. By the fundamental theorem of algebra, we find that \eqref{equ-G-6}-\eqref{equ-G-7} has at most $d-1$ solutions. This implies that $\Theta\leq d-1$. Thus \eqref{equ-G-00} holds. Consequently, we conclude \eqref{equ-G-2}
\end{proof}

\subsection{Observability from spacetime measurable sets}\label{sec: general ob}
This section is devoted to the proof of  Proposition \ref{thm-1}. As usual, we expand the initial state $u_0$ and the solution $u=e^{itP(D)}u_0$ into Fourier series:
\begin{equation*}
u_0(x)=\sum_{k\in \Z}a_ke^{ikx},\;\;u(t,x)=\sum_{k\in \Z}a_ke^{i(kx+p(k)t)}.
\end{equation*}
So the desired observability is equivalent to
\begin{align}\label{equ-G-1}
 \sum_{k\in \Z}|a_k|^2\leq C(G)\iint_G|\sum_{k\in \Z}a_ke^{i(kx+p(k)t)}|^2\d x \d t, \quad \forall \{a_k\}\in l^2.
\end{align}
\subsubsection{High-frequency estimates}\label{sec: high-fre-est}
In this part, inspired by the classical work of Zygmund \cite{Zygmund1972}, we split the observability into the high-frequency part and the low-frequency part, and then deal with them respectively. {For simplicity, we reduce the analysis of the space-time measurable set $G\subset[0,T]\times\T$ to $G\subset\T^2$.} The high-frequency estimate reads as follows.
\begin{lemma}\label{lem-high-freq}
If $G\subset\T^2$ has positive measure,  then there exists a constant $N>0$ depending only on $G$ and $p$ such that
\begin{align}\label{equ-G-10}
\frac{|G|}{2}\sum_{|k|>N, k\in \Z}|a_k|^2\leq \iint_G\Big|\sum_{|k|>N, k\in \Z}a_ke^{i(kx+p(k)t)}\Big|^2\d x \d t, \quad \forall \{a_k\}\in l^2.
\end{align}
\end{lemma}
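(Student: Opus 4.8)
The plan is to set $f(x,t)=\sum_{|k|>N}a_ke^{i(kx+p(k)t)}$ and $\lambda_k=(k,p(k))\in\Z^2$, and to bound $\iint_G|f|^2$ from below by comparing the rough weight $\mathbf 1_G$ against a trigonometric polynomial of bounded frequency support. I would first fix a small parameter $\varepsilon>0$ (chosen at the very end in terms of $|G|$ and the Strichartz constant $C_{\mathrm{Zyg}}$ of Lemma \ref{lem-str}) and, using density of trigonometric polynomials in $L^2(\T^2)$, select $\chi=\sum_{|\alpha|\le R_0}\widehat\chi_\alpha e^{i\langle\alpha,(x,t)\rangle}$ with $\|\mathbf 1_G-\chi\|_{L^2(\T^2)}\le\varepsilon$, where the finite frequency radius $R_0=R_0(\varepsilon,G)$ depends on $\varepsilon$. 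The splitting
\[
\iint_G|f|^2=\iint_{\T^2}\chi\,|f|^2+\iint_{\T^2}(\mathbf 1_G-\chi)\,|f|^2
\]
then separates a main term from an error term.

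For the error term I would apply Cauchy--Schwarz together with the $L^4$ Strichartz estimate, which is exactly where Lemma \ref{lem-str} enters decisively:
\[
\Big|\iint_{\T^2}(\mathbf 1_G-\chi)|f|^2\Big|\le\|\mathbf 1_G-\chi\|_{L^2(\T^2)}\,\|f\|_{L^4(\T^2)}^2\le \varepsilon\,C_{\mathrm{Zyg}}^2\sum_{|k|>N}|a_k|^2.
\]
For the main term, expanding $|f|^2$ and using that $\chi$ has frequency support in $\{|\alpha|\le R_0\}$, only pairs $(k,l)$ with $|\lambda_k-\lambda_l|\le R_0$ contribute. The crucial geometric input is an escape estimate: for every $R>0$ there is $N=N(R,p)$ so that $|\lambda_k-\lambda_l|>R$ whenever $|k|,|l|>N$ and $k\neq l$. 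Taking $N$ to realize this with $R=R_0$ forces the main term to collapse onto its diagonal, giving $\iint_{\T^2}\chi|f|^2=\big(\iint_{\T^2}\chi\big)\sum_{|k|>N}|a_k|^2$; and since $\big|\iint_{\T^2}\chi-|G|\big|\le 2\pi\varepsilon$ by $L^2\hookrightarrow L^1$ on $\T^2$, the main term is at least $(|G|-2\pi\varepsilon)\sum_{|k|>N}|a_k|^2$.

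The escape estimate I would prove by a leading-order analysis: given $|k|,|l|>N$ with $k\neq l$, assume WLOG $|k|\ge|l|$ and set $m=k-l\neq0$; if $|m|>R$ then $|\lambda_k-\lambda_l|\ge|m|>R$, whereas if $1\le|m|\le R$ then $p(k)-p(k-m)$ is a polynomial in $k$ of degree $d-1$ with leading coefficient $dm\neq0$, so $|p(k)-p(l)|>R$ for $|k|$ large, uniformly over $1\le|m|\le R$ — here $d\ge2$ is essential. Combining the two estimates yields
\[
\iint_G|f|^2\ge\big(|G|-2\pi\varepsilon-\varepsilon C_{\mathrm{Zyg}}^2\big)\sum_{|k|>N}|a_k|^2,
\]
so choosing $\varepsilon\le |G|/\big(2(2\pi+C_{\mathrm{Zyg}}^2)\big)$ finishes the proof, with $N$ depending only on $G$ and $p$ as required. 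I expect the main obstacle to be the order of the quantifiers: $\varepsilon$, and hence $R_0$ and the frequency support of $\chi$, must be frozen first, and only afterwards can $N$ be pushed large enough — via the escape estimate — to annihilate every off-diagonal contribution in the main term; without the $L^4$ Strichartz bound one could not absorb the error generated by the genuinely rough, non-smooth set $G$.
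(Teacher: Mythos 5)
Your proof is correct, and the quantifier discipline you emphasize --- fix $\varepsilon$ from $|G|$ and $C_{\mathrm{Zyg}}$, then $R_0=R_0(\varepsilon,G)$, then $N=N(R_0,p)$ via the escape estimate --- is exactly what makes $N$ depend only on $G$ and $p$; your escape estimate (splitting on $|k-l|>R_0$ versus $1\le |k-l|\le R_0$, where $p(k)-p(k-m)$ has degree $d-1$ with leading coefficient $dm\neq 0$) is sound, and arguably cleaner than the paper's odd/even case analysis around \eqref{equ-G-14}. The route differs from the paper's in how the roughness of $\mathbf{1}_G$ is absorbed. The paper introduces no approximant: it expands $|f|^2$ over $G$ itself, so the off-diagonal contribution is $\sum_{k_1\neq k_2}a_{k_1}\overline{a}_{k_2}\,\widehat{\mathbf{1}_G}(\lambda_{k_2}-\lambda_{k_1})$, which it bounds in one stroke by Cauchy--Schwarz using two inputs: the multiplicity bound $\Theta\le d-1$ from the proof of Lemma \ref{lem-str}, and the same escape estimate, which guarantees that only Fourier coefficients of $\mathbf{1}_G$ at frequencies $|\alpha|>N$ enter; $N$ is then chosen so that the Plancherel tail satisfies \eqref{equ-G-19}. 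Thus in the paper a single parameter $N$ does double duty (escape radius and tail cutoff), whereas you decouple these roles into $R_0$ and $N$, make the off-diagonal part of the main term vanish identically rather than estimating it, and pay instead with the error term $\iint(\mathbf{1}_G-\chi)|f|^2$, absorbed by the $L^4$ bound of Lemma \ref{lem-str} used as a black box. The ingredients are ultimately equivalent --- the $L^4$ estimate is itself proved via the counting bound $\Theta\le d-1$ --- but your decomposition buys a cleaner separation of parameters and, notably, is precisely the mechanism the paper deploys one step later in Lemma \ref{lem: perturbative} to prove stability of \eqref{equ-G-10} under translations, with $G\setminus(G-h)$ playing the role of your $\mathbf{1}_G-\chi$; what the paper's direct argument buys is a shorter, self-contained proof with the explicit threshold \eqref{equ-G-19}. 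Two small polishing points for your write-up: take $\chi$ to be the truncated Fourier series of $\mathbf{1}_G$, so that $\chi$ and hence $\iint_{\T^2}\chi$ are real (otherwise insert real parts before concluding the lower bound); and justify the collapse $\iint_{\T^2}\chi|f|^2=\bigl(\iint_{\T^2}\chi\bigr)\sum_{|k|>N}|a_k|^2$ by Parseval, pairing the band-limited $\chi$ against $|f|^2$, which lies in $L^2(\T^2)$ exactly because of your $L^4$ bound, so no interchange of summation and integration is left implicit.
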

\begin{proof}
Note that the same relation as \eqref{equ-G-3} holds for the sum over $|k|\geq N$. Integrating over $G$, we obtain
\begin{multline}\label{equ-G-11}
  \iint_G|\sum_{|k|>N, k\in \Z}a_ke^{i(kx+p(k)t)}|^2\d x \d t\\
  =
|G|\sum_{|k|>N, k\in \Z}|a_k|^2+\iint_G \sum_{k_1\neq k_2, |k_1|,|k_2|>N}a_{k_1}\overline{a}_{k_2}e^{i\langle \lambda_{k_1}-\lambda_{k_2}, (x,t)\rangle}\d x \d t.
\end{multline}
The last term on the RHS of \eqref{equ-G-11} can be written as
\begin{align}\label{equ-G-12}
    \sum_{k_1\neq k_2, |k_1|,|k_2|>N}a_{n_1}\overline{a}_{n_2} \widehat{\mathbf{1}_G}(\lambda_{k_2}-\lambda_{k_1}),
\end{align}
where $\mathbf{1}_G$ denotes the characteristic function of $G$, and $\widehat{\cdot}$ denotes the space-time Fourier transform.
Using the Cauchy–Schwarz inequality, we have
\begin{align}\label{equ-G-13}
|\eqref{equ-G-12}|&\leq \left(\sum_{k_1\neq k_2, |k_1|,|k_2|>N}|a_{k_1}\overline{a}_{k_2}|^2\right)^{1/2}\left(\sum_{k_1\neq k_2, |k_1|,|k_2|>N}|\widehat{\mathbf{1}_G}(\lambda_{k_2}-\lambda_{k_1})|^2\right)^{1/2}\nonumber\\
&\leq \left(\sum_{k_1\neq k_2, |k_1|,|k_2|>N}|a_{k_1}\overline{a}_{k_2}|^2\right)^{1/2}\left(\Theta\sum_{|\alpha|>N, \alpha\in \Z^2}|\widehat{\mathbf{1}_G}(\alpha)|^2\right)^{1/2},
\end{align}
where $\Theta$ is the same as in \eqref{equ-G-5}, and we have used the fact
\begin{align}\label{equ-G-14}
    |\lambda_{k_2}-\lambda_{k_1}|= (|k_2-k_1|^2+|p(k_2)-p(k_1)|^2)^{1/2}>N, \quad k_1\neq k_2, |k_1|,|k_2|>N,
\end{align}
for $N>0$ large enough. Indeed, we note that
\begin{align*}
 |p(k_1)-p(k_2)|\gtrsim  \begin{cases}
 |k_1-k_2|(k_1^{d-1}+k_2^{d-1}), \quad d \mbox{ odd },\\
 |k_1^2-k_2^2|(k_1^{d-2}+k_2^{d-2}), \quad d \mbox{ even }.
 \end{cases} 
\end{align*}
 This proves \eqref{equ-G-14} directly if $d$ is odd. If $d$ is even,  we have
$$
|\lambda_{k_2}-\lambda_{k_1}|\gtrsim |k_2-k_1|(1+|k_2+k_1|)\gtrsim N, \quad k_1\neq k_2, |k_1|,|k_2|>N.
$$
According to the proof of Lemma \ref{lem-str}, we know $\Theta\leq C(d)$. Thus, by the Cauchy–Schwarz inequality, we deduce from \eqref{equ-G-13} that
\begin{align}\label{equ-G-18}
|\eqref{equ-G-12}|\leq C^{1/2}(d)\sum_{|k|>N,k\in \Z}|a_k|^2\left(\sum_{|\alpha|>N,\alpha\in \Z^2}|\widehat{\mathbf{1}_G}(\alpha)|^2\right)^{1/2}.
\end{align}
Since $G\subset\T^2$, $\mathbf{1}_G\in L^2(\T^2)$, and by the Plancherel theorem, $\sum_{\alpha\in \Z^2}|\widehat{\mathbf{1}_G}(\alpha)|^2<\infty$. Thus, there exsits $N>0$ large enough such that
\begin{align}\label{equ-G-19}
C^{1/2}(d)\left(\sum_{|\alpha|>N,\alpha\in \Z^2}|\widehat{\mathbf{1}_G}(\alpha)|^2\right)^{1/2}<\frac{|G|}{2}.
\end{align}
Plugging \eqref{equ-G-18}-\eqref{equ-G-19} into \eqref{equ-G-11}, we obtain \eqref{equ-G-10}.
\end{proof}

Using the Strichartz estimate in Lemma \ref{lem-str}, we show a stablity of the observability in Lemma \ref{lem-high-freq} slightly, namely \eqref{equ-G-10} still holds if $G$ is replaced by a smaller set $G\cap (G-h)$ if $h$ is small enough. Recall that $G-h=\{z-h: z=(t,x)\in G\}$. 
\begin{lemma}\label{lem: perturbative}
Let $G$ and $N$ be the same as those in Lemma \ref{lem-high-freq}. Then there exists a constant $\delta_0>0$ depending only on $G$ and $p$ such that
$$
\frac{|G|}{4}\sum_{|k|>N, k\in \Z}|a_k|^2\leq \iint_{G\cap (G-h)}\Big|\sum_{|k|>N, k\in \Z}a_ke^{i(kx+p(k)t)}\Big|^2\d x \d t, \quad \forall \{a_k\}\in l^2
$$
for all $h\in \T^2, |h|<\delta_0$.
\end{lemma}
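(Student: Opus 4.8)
The plan is to regard the integral over $G\cap(G-h)$ as the integral over all of $G$ minus the contribution of the part of $G$ that is lost under the translation, and then to show that this lost contribution is negligible when $h$ is small. Writing $f(t,x)=\sum_{|k|>N}a_ke^{i(kx+p(k)t)}$ for the high-frequency profile, I would start from the exact identity
\[
\iint_{G\cap(G-h)}|f|^2\d x\d t=\iint_{G}|f|^2\d x\d t-\iint_{G\setminus(G-h)}|f|^2\d x\d t.
\]
Lemma \ref{lem-high-freq} already bounds the first term on the right from below by $\tfrac{|G|}{2}\sum_{|k|>N}|a_k|^2$, so it remains to absorb the error term $\iint_{G\setminus(G-h)}|f|^2$ into a quantity smaller than $\tfrac{|G|}{4}\sum_{|k|>N}|a_k|^2$, uniformly over $\{a_k\}\in l^2$.

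To estimate the error term I would apply the Cauchy--Schwarz inequality on $G\setminus(G-h)$ together with the $L^4$ Strichartz estimate of Lemma \ref{lem-str}:
\[
\iint_{G\setminus(G-h)}|f|^2\d x\d t\leq |G\setminus(G-h)|^{1/2}\,\|f\|_{L^4(\T^2)}^2\leq C_{\rm{Zyg}}^2\,|G\setminus(G-h)|^{1/2}\sum_{|k|>N}|a_k|^2.
\]
The decisive feature is that the Strichartz estimate decouples the spectral mass $\sum|a_k|^2$ from the purely geometric factor $|G\setminus(G-h)|^{1/2}$, so the error is small uniformly in $\{a_k\}$ as soon as $|G\setminus(G-h)|$ is small.

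The main point, and the step I would treat most carefully, is the claim that $|G\setminus(G-h)|\to 0$ as $|h|\to 0$. This is nothing but the continuity of translation in $L^1(\T^2)$: since $|G\setminus(G-h)|\leq|G\,\triangle\,(G-h)|=\|\mathbf{1}_G-\mathbf{1}_{G-h}\|_{L^1(\T^2)}$ and $\mathbf{1}_G\in L^1(\T^2)$, the right-hand side vanishes as $h\to 0$ (by approximating $\mathbf{1}_G$ in $L^1$ by a continuous function). I would therefore fix $\delta_0>0$, depending only on $|G|$ and on $C_{\rm{Zyg}}$ (hence only on $G$ and $p$), so that $C_{\rm{Zyg}}^2\,|G\setminus(G-h)|^{1/2}\leq\tfrac{|G|}{4}$ for all $|h|<\delta_0$.

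Combining the three displays then yields
\[
\iint_{G\cap(G-h)}|f|^2\d x\d t\geq\frac{|G|}{2}\sum_{|k|>N}|a_k|^2-\frac{|G|}{4}\sum_{|k|>N}|a_k|^2=\frac{|G|}{4}\sum_{|k|>N}|a_k|^2,
\]
which is the asserted inequality. I anticipate no genuine obstacle beyond the translation-continuity input; everything else is bookkeeping, and the uniformity in $\{a_k\}$---the one place where a naive argument could fail, since the high-frequency profile $f$ itself depends on the sequence---is precisely what the $L^4$ Strichartz bound of Lemma \ref{lem-str} secures.
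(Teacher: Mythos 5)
Your proposal is correct and follows essentially the same route as the paper: both decompose $G$ into $G\cap(G-h)$ and $G\setminus(G-h)$, absorb the error term via H\"older/Cauchy--Schwarz combined with the $L^4$ Strichartz bound of Lemma \ref{lem-str}, and conclude from $|G\setminus(G-h)|\to 0$ as $h\to 0$. The only (harmless) difference is that you spell out the translation-continuity of $\mathbf{1}_G$ in $L^1(\T^2)$, which the paper simply asserts.
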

\begin{proof}
Note that $G$ equals to the union of $G\cap (G-h)$ and $G\backslash (G-h)$, thanks to the bound \eqref{equ-G-10}, it suffices to show that
\begin{align}
\label{equ-514-1}    
\iint_{G\backslash (G-h)}\Big|\sum_{|k|>N, k\in \Z}a_ke^{i(kx+p(k)t)}\Big|^2\d x \d t\leq\frac{|G|}{4}\sum_{|k|>N, k\in \Z}|a_k|^2, \quad \forall \{a_k\}\in l^2,
\end{align}
for all $h\in \T^2, |h|<\delta_0$.

Indeed, thanks to the Strichartz estimate in Lemma \ref{lem-str}, we have
\begin{align}\label{equ-514-2}  
\|\sum_{|k|>N, k\in \Z}a_ke^{i(kx+p(k)t)}\|_{L^4(\T^2)}\leq C\left(\sum_{|k|>N}|a_k|^2 \right)^{1/2},
\end{align}
with $C>0$ depending only on (the degree of) $p$. For every subset  $F\subset \T^2$, by H\"older inequality and \eqref{equ-514-2}, 
\begin{align}\label{equ-514-3} 
\|\sum_{|k|>N, k\in \Z}a_ke^{i(kx+p(k)t)}\|_{L^2(F)}\leq |F|^{\frac{1}{4}}\|\sum_{|k|>N, k\in \Z}a_ke^{i(kx+p(k)t)}\|_{L^4(F)}\nonumber
\\
\leq 
|F|^{\frac{1}{4}}\|\sum_{|k|>N, k\in \Z}a_ke^{i(kx+p(k)t)}\|_{L^4(\T^2)}
\leq C|F|^{\frac{1}{4}}   \left(\sum_{|k|>N}|a_k|^2 \right)^{1/2}.
\end{align}
For every $h\in \T^2$, letting $F=G\backslash (G-h)$ in \eqref{equ-514-3}, we have
$$
\iint_{G\backslash (G-h)}\Big|\sum_{|k|>N, k\in \Z}a_ke^{i(kx+p(k)t)}\Big|^2\d x \d t\leq
C^2|G\backslash(G-h)|^{\frac{1}{2}}\sum_{|k|>N}|a_k|^2.
$$
But the measure $|G\backslash(G-h)|\to 0$ as $h\to 0$, then we conclude \eqref{equ-514-1} if $h$ is small enough.
\end{proof}

\subsubsection{Low-frequency analysis}\label{sec: low-fre-est}
To deal with the low frequency part, we need the following lemma, which is a variant version of \cite[p.10, Lemma 2.13]{BonamiUCP2006}. Note that the following result holds for $\T^{m}(m\geq1)$, but we shall only use it in $\T^2$ in this paper.
\begin{lemma}[Augmented observability]\label{lem-add-freq}
Let $\Lambda$ be a subset of $\Z^m$ and let $F$ be a measure subset of $\T^m$ with positive measure. Assume that for  there exists a constant $C(F)>0$ such that
\begin{align}\label{equ-lem-add-freq-1}
\sum_{k\in\Lambda}|a_k|^2\leq C(F)\int_{F\cap (F-h)}|\sum_{k\in \Lambda}a_ke^{ik\cdot x}|^2\d x, \quad \forall \{a_k\}\in l^2,
\end{align}
holds for all $|h|\leq \delta_0$ with some $\delta_0>0$.
Let $\lambda\in \Z^m \backslash \Lambda$. Then
there exists a constant $C'(F)>0$ such that
\begin{align}\label{equ-lem-add-freq-2}
\sum_{k\in\Lambda\cup\{\lambda\}}|a_k|^2\leq C'(F)\int_F|\sum_{k\in \Lambda\cup \{\lambda\}}a_ke^{ik\cdot x}|^2\d x, \quad \forall \{a_k\}\in l^2.
\end{align}
\end{lemma}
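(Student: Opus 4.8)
The plan is to adjoin the single frequency $\lambda$ by a difference trick that annihilates the new mode, thereby reducing matters to the hypothesis on $\Lambda$, and then to recover the coefficient $a_\lambda$ separately. Write $f(x)=\sum_{k\in\Lambda\cup\{\lambda\}}a_ke^{ik\cdot x}$ and $g(x)=\sum_{k\in\Lambda}a_ke^{ik\cdot x}$, so that $f=g+a_\lambda e^{i\lambda\cdot x}$. For $h\in\T^m$ I would introduce the translated difference
\[
g_h(x):=f(x)-e^{-i\lambda\cdot h}f(x+h)=\sum_{k\in\Lambda}a_k\bigl(1-e^{i(k-\lambda)\cdot h}\bigr)e^{ik\cdot x},
\]
the point being that the $\lambda$-term cancels, so $g_h$ has spectrum in $\Lambda$ and the hypothesis \eqref{equ-lem-add-freq-1} applies to $g_h$ on $F\cap(F-h)$ for every $|h|\le\delta_0$.

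First I would bound the right-hand side of \eqref{equ-lem-add-freq-1}: since $|g_h(x)|^2\le 2|f(x)|^2+2|f(x+h)|^2$, the inclusion $F\cap(F-h)\subset F$ together with the measure-preserving change of variables $y=x+h$ (under which $(F\cap(F-h))+h=F\cap(F+h)\subset F$) gives $\int_{F\cap(F-h)}|g_h|^2\le 4\int_F|f|^2$. Combined with \eqref{equ-lem-add-freq-1}, this yields, uniformly in $|h|\le\delta_0$,
\[
\sum_{k\in\Lambda}|a_k|^2\bigl|1-e^{i(k-\lambda)\cdot h}\bigr|^2\le 4C(F)\int_F|f|^2.
\]

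Next I would integrate this inequality in $h$ over the ball $B_{\delta_0}:=\{|h|\le\delta_0\}$; as the right-hand side is independent of $h$, it suffices to bound $\int_{B_{\delta_0}}|1-e^{in\cdot h}|^2\,dh$ from below, uniformly over all $n=k-\lambda\in\Z^m\setminus\{0\}$. This uniform positivity is the heart of the argument and the step I expect to be the main obstacle, and it is precisely where the integrality of the frequencies enters: writing $|1-e^{in\cdot h}|^2=4\sin^2(\tfrac{n\cdot h}{2})$, for $|n|\le\pi/\delta_0$ the pointwise bound $4\sin^2(\tfrac{n\cdot h}{2})\gtrsim(n\cdot h)^2$ integrated over $B_{\delta_0}$ together with $|n|\ge1$ gives a lower bound of order $\delta_0^{m+2}$, while for large $|n|$ the Riemann–Lebesgue/Bessel decay of $\widehat{\mathbf{1}_{B_{\delta_0}}}$ forces $\int_{B_{\delta_0}}\cos(n\cdot h)\,dh<\tfrac12|B_{\delta_0}|$. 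A continuity and compactness argument applied to the radial profile $\psi(s)=\int_{B_{\delta_0}}2(1-\cos(s\,h_1))\,dh$ (obtained by rotating $n$ to $|n|e_1$), which is continuous, positive on $(0,\infty)$, and tends to $2|B_{\delta_0}|$ at infinity, bridges the intermediate range and yields $c_0:=\inf_{n\in\Z^m\setminus\{0\}}\int_{B_{\delta_0}}|1-e^{in\cdot h}|^2\,dh>0$. Integrating the displayed inequality then gives $\sum_{k\in\Lambda}|a_k|^2\le \tfrac{4C(F)|B_{\delta_0}|}{c_0}\int_F|f|^2$.

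Finally I would recover $a_\lambda$. On $F$ we have $a_\lambda e^{i\lambda\cdot x}=f-g$, and since $|e^{i\lambda\cdot x}|=1$, $|a_\lambda|\,|F|^{1/2}=\|f-g\|_{L^2(F)}\le\|f\|_{L^2(F)}+\|g\|_{L^2(F)}$; bounding $\|g\|_{L^2(F)}^2\le\|g\|_{L^2(\T^m)}^2=(2\pi)^m\sum_{k\in\Lambda}|a_k|^2$ by the previous step shows $|a_\lambda|^2\lesssim\int_F|f|^2$. Adding the two estimates produces $\sum_{k\in\Lambda\cup\{\lambda\}}|a_k|^2\le C'(F)\int_F|f|^2$, which is \eqref{equ-lem-add-freq-2}. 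Apart from the uniform positivity of $c_0$, every other step reduces to the triangle inequality, a translation-invariance change of variables, and Parseval, so the delicate content is concentrated there.
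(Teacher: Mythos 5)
Your proof is correct, and it takes a genuinely different route from the paper's. Interestingly, both arguments hinge on the same cancellation device — a modulated translation difference $f(x)-e^{-i\lambda\cdot h}f(x+h)$ kills the $\lambda$-mode and produces a function with spectrum in $\Lambda$, to which the hypothesis \eqref{equ-lem-add-freq-1} applies on $F\cap(F-h)$ — but you deploy it quantitatively while the paper deploys it qualitatively. The paper argues by contradiction and compactness: it takes a normalized sequence violating \eqref{equ-lem-add-freq-2}, shows it is Cauchy in $L^2$ (using the $h=0$ case of the hypothesis), passes to a strong limit $f+c$ vanishing on $F$, and only then applies the translation trick to the limit function, concluding $g=f(\cdot+h)-f$ vanishes identically, hence $\widehat f\equiv 0$ and $c=0$, contradicting the normalization. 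Your proof instead applies the hypothesis directly to $g_h$ for every $|h|\le\delta_0$, integrates in $h$ over the ball $B_{\delta_0}$, and invokes the uniform lower bound $c_0=\inf_{n\in\Z^m\setminus\{0\}}\int_{B_{\delta_0}}|1-e^{in\cdot h}|^2\d h>0$ — which is exactly where the integrality of the frequencies ($|n|\ge 1$), continuity of the radial profile, and Riemann--Lebesgue decay enter, and your three-range verification of this bound is sound. The trade-off: the paper's soft argument is shorter step-by-step but produces no explicit constant, whereas yours yields an effective $C'(F)$ depending only on $C(F)$, $\delta_0$, $|F|$, $|B_{\delta_0}|$ and the dimension; since the lemma is used in a finite induction (adding one low frequency at a time in the proof of Proposition \ref{thm-1}), the qualitative version suffices for the paper, but your quantitative version would allow one to track how the observability constant degrades with the number of low-frequency modes added, which the compactness argument cannot do.
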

\begin{proof}
We include the proof here for the reader's convenience. Fix a positive measure set $F\subset \T^m$. Without loss of generality, we assume $\lambda=0$. We argue by contradiction. Suppose that \eqref{equ-lem-add-freq-2} fails, then there exist a sequence $\{f_n\}_{n\in\N}\subset L^2(\T^m)$ with $\mathrm{supp}\widehat{f_n}\subset \Lambda$ and $c_n\in \C$ such that
\begin{align}\label{equ-lem-add-freq-3}
\|f_n+c_n\|^2_{L^2(\T^m)}=1, \quad \int_F|f_n+c_n|^2\d x \to 0 \text{ as } n\to \infty.  
\end{align}
Here and below, we say $\mathrm{supp}\widehat{f}\subset\Lambda$ if $f=\sum_{k\in \Lambda}a_ke^{ik\cdot x}$ for some $\{a_k\}\in l^2$. Since $0\notin \Lambda$, by the orthognolity, we have $\|f_n\|_{L^2(\T^m)}\leq 1$ and $|c_n|\leq 1$ for all $n\in\N$. It follows that there exist subsequces, still denoted by $f_n,c_n$, such that $f_n\rightharpoonup f$ weakly in $L^2(\T^m)$ and $c_n\to c$ for some $f\in L^2(\T^m)$ with $\mathrm{supp}\widehat{f}\subset\Lambda$ and $c\in \C$. Thus $f_n+c_n\rightharpoonup f+c$ weakly in $L^2(\T^m)$. 

We now prove that $f_n$ converges to $f$ in $L^2(\T^m)$ strongly. It is natural to show that $\{f_n\}_{n\in\N}$ is a Cauchy sequence in $L^2(\T^m)$. By the Plancherel theorem, using the fact that $\mathrm{supp }\widehat{f_n}\subset \Lambda$, for any $n\in\N$, we have
\begin{equation*}
\|f_l-f_n\|^2_{L^2(\T^m)}=\sum_{k\in\Lambda}|\widehat{f_l}(k)-\widehat{f_n}(k)|^2.
\end{equation*}
Due to \eqref{equ-lem-add-freq-1}, in particular, taking $h=0$, we know that
\begin{align*}
\sum_{k\in\Lambda}|\widehat{f_l}(k)-\widehat{f_n}(k)|^2&\leq C(F)\int_F|\sum_{k\in\Lambda}(\widehat{f_l}(k)-\widehat{f_n}(k))e^{ik\cdot x}|^2\d x\\
&=C(F)\int_F|\sum_{k\in\Lambda}(\widehat{f_l}(k)-\widehat{f_n}(k))e^{ik\cdot x}+(c_l-c_n)-(c_l-c_n)|^2\d x\\
&=C(F)\int_F|\sum_{k\in\Lambda\cup\{0\}}\widehat{f_l}(k)e^{ik\cdot x}-\sum_{k\in\Lambda\cup\{0\}}\widehat{f_n}(k)e^{ik\cdot x}-(c_l-c_n)|^2\d x\\
&\leq 3C(F)\int_F|\sum_{k\in\Lambda\cup\{0\}}\widehat{f_l}(k)e^{ik\cdot x}|^2\d x+3C(F)\int_F|\sum_{k\in\Lambda\cup\{0\}}\widehat{f_n}(k)e^{ik\cdot x}|^2\d x\\
&+3C(F)|F||c_l-c_n|^2.
\end{align*}
Here we adopt a convention that $\widehat{f_n}(0)=c_n$. According to \eqref{equ-lem-add-freq-3}, we know that the first and second terms appearing on the right-hand side tend to $0$, as $l,n\to\infty$. Therefore, for any $\varepsilon>0$, there exists $N_f\in\N$ sufficiently large such that
\[
3C(F)\int_F|\sum_{k\in\Lambda\cup\{0\}}\widehat{f_n}(k)e^{ik\cdot x}|^2\d x<\frac{\varepsilon}{3},\forall n> N_f.
\]
Since $\{c_n\}_{n\in\N}$ is a Cauchy sequence in $\C$, we know that there exists $N_c\in\N$ such that 
\[
3C(F)|F||c_l-c_n|^2<\frac{\varepsilon}{3},\forall l,n>N_c.
\]
As a consequence, as $l,n>\max\{N_c,N_f\}$, we have
\[
\sum_{k\in\Lambda}|\widehat{f_l}(k)-\widehat{f_n}(k)|^2<\varepsilon,\mbox{ i.e., }\{f_n\}\mbox{ is a Cauchy sequence in }L^2(\T^m). 
\]
Hence, $f_n\to f$ strongly in $L^2(\T^m)$. Combining with \eqref{equ-lem-add-freq-3}, we have
\[
\|f+c\|^2_{L^2(\T^m)}=\lim_{n\to\infty}\|f_n+c_n\|^2_{L^2(\T^m)}=1.
\]
However, by \eqref{equ-lem-add-freq-3}, we know that $\mathbf{1}_F(f_k+c_k)\to 0$ strongly in $L^2(\T^m)$. By the uniqueness of the limit, we obtain
\begin{align}
\label{equ-lem-add-freq-4}
f+c=0 \quad \text{on  } F.
\end{align}
For every $h\in \T^m$, introduce a new function $g=f(\cdot+h)-f$ then 
\begin{equation*}
g(x)=\sum_{k\in\Lambda}\widehat{f}(k)(e^{ik\cdot(x+h)}-e^{ik\cdot x})=\sum_{k\in\Lambda}\widehat{f}(k)(e^{ik\cdot h}-1)e^{ik\cdot x},
\end{equation*}
which means that $\mathrm{supp\, }\widehat{g}\subset \Lambda$. In addition, \eqref{equ-lem-add-freq-4} implies
\begin{align}
\label{equ-lem-add-freq-5}   
g=0 \text{ on } F\cap (F-h) \text{ for all }h\in \T^m.
\end{align}
However, by the assumption \eqref{equ-lem-add-freq-1}, together with the Plancherel theorem, for $|h|<\delta_0$, we have
\begin{align}
\label{equ-lem-add-freq-6} 
\|g\|^2_{L^2(\T^m)}&=\sum_{k\in\Lambda}|\widehat{f}(k)(e^{ikh}-1)|^2\notag\\
&\leq C(F)\int_{F\cap (F-h)}|\sum_{k\in\Lambda}\widehat{f}(k)(e^{ikh}-1)e^{ik\cdot x}|^2\d x \notag\\
&=C(F)\int_{F\cap (F-h)}|g|^2\d x.
\end{align}
Combining \eqref{equ-lem-add-freq-5}-\eqref{equ-lem-add-freq-6}, we obtain
$g=0$ on $\T^m$ for all $|h|\leq \delta_0$. Thus, all Fourier coefficients of $g$ vanish, namely
$$
0=\widehat{g}(k)=(e^{ik\cdot h}-1)\widehat{f}(k), \quad \forall k\in \Lambda, |h|\leq \delta_0.
$$
Noting $0\notin \Lambda$, this implies that $\widehat{f}(k)=0$ for all $k\in \Lambda$, then $f=0$ on $\T^m$. This, together with \eqref{equ-lem-add-freq-4}, gives $f+c\equiv 0$, which leads to a contradiction with the fact $\|f+c\|_{L^2(\T^m)}=1$.  
\end{proof}
Now we are in a position to finish the proof of Theorem \ref{thm-1}.
\begin{proof}[Proof of Theorem \ref{thm-1}]
Comparing the high-frequency estimate \eqref{equ-G-10} and the full observability \eqref{equ-ob-intro}, we need to add the finite low-frequency part $\{(k,p(k)): |k|\leq N\}$. Let $\Lambda=\{(k,p(k)): |k|>N\}$, where $N$ is the same as in Lemma \ref{lem-high-freq}. Thanks to Lemma \ref{lem: perturbative}, there exists $\delta=\delta(G)>0$ such that for $\forall \{a_k\}\in l^2$ and $\forall h\in(-\delta,\delta)$, we have
\begin{equation*}
\sum_{(k,p(k))\in\Lambda}|a_k|^2\leq C(G)\iint_{G\cap (G-h)}\Big|\sum_{(k,p(k))\in\Lambda}a_ke^{i(kx+p(k)t)}\Big|^2\d x \d t. 
\end{equation*}
Given a single point $(k_0,p(k_0))$ with $|k_0|\leq N$, let $\Lambda_1=\Lambda\cup \{(k_0,p(k_0))\}$. Applying Lemma \ref{lem-add-freq}, we obtain
\begin{equation*}
\sum_{(k,p(k))\in\Lambda_1}|a_k|^2\leq C'(G)\iint_{G}\Big|\sum_{(k,p(k))\in\Lambda_1}a_ke^{i(kx+p(k)t)}\Big|^2\d x \d t. 
\end{equation*}
It is easy to see that \eqref{equ-514-1} still holds true, i.e.,there exists $\widetilde{C}(G)<\frac{1}{2C'(G)}$ and $\delta_1\in (0,\delta)$ such that
\begin{equation*}
\iint_{G\backslash (G-h)}\Big|\sum_{(k,p(k))\in\Lambda_1}a_ke^{i(kx+p(k)t)}\Big|^2\d x \d t\leq \widetilde{C}(G)\sum_{(k,p(k))\in\Lambda_1}|a_k|^2, \quad \forall \{a_k\}\in l^2 , \forall h\in (-\delta_1,\delta_1).
\end{equation*}
This implies that 
\begin{equation*}
\sum_{(k,p(k))\in\Lambda_1}|a_k|^2\leq 2C'(G)\iint_{G\cap (G-h)}\Big|\sum_{(k,p(k))\in\Lambda_1}a_ke^{i(kx+p(k)t)}\Big|^2\d x \d t, \quad \forall \{a_k\}\in l^2,\forall h\in (-\delta_1,\delta_1).
\end{equation*}
Since $\#\{(k,p(k)): |k|\leq N\}=2N+1$, we repeat this procedure $2N+1$ times, we obtain \eqref{equ-G-1}.
\end{proof}
\begin{remark}
In fact, the proof of Proposition \ref{thm-1} can be given by a direct application of Miheev's theorem. Lemma \ref{lem-str} shows that the lattice points $\{(k,p(k)):k\in \Z)\}$ is a $\Lambda(4)$ set, in the terminology of Rudin \cite{Rudin60}. Then according to the Miheev theorem (see \cite{HavinTheuncertain1994} for the one dimension case, and \cite{BonamiUCP2006} for higher dimensions), if $G$ is a measurable set in $\T^2$ with positive measure, we have
\begin{align}\label{equ-G-9}
\left\|\sum_{k\in \Z}a_ke^{i(kx+p(k)t)}\right\|_{L^2(\T^2)}\leq C(G)\left\|\sum_{k\in \Z}a_ke^{i(kx+p(k)t)}\right\|_{L^2(G)}.
\end{align}
But the left hand side of \eqref{equ-G-9} is a constant times $(\sum_{k\in \Z}|a_k|^2)^{1/2}$, thus we conclude \eqref{equ-G-1}. This proof uses the deep Miheev theorem in a black box way. 
\end{remark}
Compared to the preceding framework, applying Miheev’s theorem is more restrictive. As we will present in Section \ref{sec: linearized system}, the preceding framework offers greater flexibility and can be adapted to a broader range of settings.

\subsection{Proof of Theorem \ref{thm-spacetime-a} and consequences}\label{sec: conclusion}
In this subsection, we complete the proof of Theorem \ref{thm-spacetime-a}.
\begin{proof}[Proof of Theorem \ref{thm-spacetime-a}]
The first statement is a direct consequence of H\"older inequality and Lemma \ref{lem-Stri-LinftyL2}. Indeed,
$$
\int_0^T\int_\T |a||e^{itP(D)}u_0|^2\d x \d t\leq \|a\|_{L^1_x(\T;L^\infty_t[0,T])}\|e^{itP(D)}u_0\|^2_{L^\infty_x(\T;L^2_t[0,T])}\leq C\|u_0\|^2_{L^2(\T)}.
$$
To show \eqref{equ-55-15-intro}, for every $j\geq 1$, consider the set $\Omega_j:=\{(x,t)\in \T\times[0,T]: |a(x,t)|\geq \frac{1}{j} \}$.
Since $\|a\|_{L^1_x(\T;L^\infty_t[0,T])}>0$, there exists a $j_0>0$ such that the 2-dimensional Lebesgue measure $|\Omega_{j_0}|>0$. Thanks to Theorem \ref{thm-1}, we have $\|u_0\|^2_{L^2(\T)}\lesssim  \iint_{\Omega_{j_0}} |e^{itP(D)}u_0|^2\d x \d t, \forall u_0\in L^2(\T)$.
This, together with the definition of $\Omega_j$, gives
\begin{align*}
\|u_0\|^2_{L^2(\T)}\lesssim j_0  \iint_{\Omega_{j_0}} |a||e^{itP(D)}u_0|^2\d x \d t\lesssim j_0  \int_0^T\int_\T |a||e^{itP(D)}u_0|^2\d x \d t  
\end{align*}
for all $u_0\in L^2(\T)$ as required.
\end{proof}
The following corollary provides another type of observability. The following lemma gives an improvement of \eqref{equ-55-15-intro}, which will be used in the study of exponential decay in the last section.
\begin{lemma}\label{lem-ob-compact-pertur}
Let $T>0$ and let $\mathcal{A}$ be a precompact subset in $L^1_x(\T;L^\infty_t([0,T]))$ such that for some constant $a_0>0$,
\begin{align}\label{equ-55-16}
\|a\|_{L^1_x(\T;L^\infty_t([0,T]))}\geq a_0, \quad \forall a\in \mathcal{A}.    
\end{align}
Then there exists a constant $C>0$ such that
\begin{align}\label{equ-55-17}
 \|u_0\|^2_{L^2(\T)}\leq C\int_0^T\int_\T|a||e^{itP(D)}u_0|^2\d x \d t, \quad \forall a\in \mathcal{A}, u_0\in L^2(\T).  
\end{align}
\end{lemma}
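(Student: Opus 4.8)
The plan is to argue by contradiction and to reduce the desired \emph{uniform} estimate over the family $\mathcal{A}$ to the single-weight observability \eqref{equ-55-15-intro} that is already available from Theorem~\ref{thm-spacetime-a}. The point is that precompactness will supply one limiting weight for which observability is known, and a norm-based upper bound (uniform in the initial data) will let me transfer a vanishing observation from the sequence of weights to this single limit, thereby avoiding any weak-limit or unique-continuation argument on the data themselves. First I would negate \eqref{equ-55-17}: for every $n\in\N$ there would exist $a_n\in\mathcal{A}$ and, after normalization, $u_0^n\in L^2(\T)$ with $\|u_0^n\|_{L^2(\T)}=1$ such that, writing $v_n:=e^{itP(D)}u_0^n$,
\[
\int_0^T\int_\T|a_n|\,|v_n|^2\,\d x\,\d t<\tfrac1n\longrightarrow 0.
\]
Using the precompactness of $\mathcal{A}$ in $L^1_x(\T;L^\infty_t([0,T]))$, I would pass to a subsequence along which $a_n\to a$ in $L^1_x(\T;L^\infty_t([0,T]))$. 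By continuity of the norm together with the lower bound \eqref{equ-55-16}, this limit satisfies $\|a\|_{L^1_x(\T;L^\infty_t([0,T]))}\geq a_0>0$, so the single-weight observability \eqref{equ-55-15-intro} of Theorem~\ref{thm-spacetime-a} applies to this fixed $a$ with some constant $C'_a$.

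The decisive step is the transfer of the vanishing observation from $a_n$ to the fixed limit $a$. From the pointwise triangle inequality $|a|\le|a_n|+|a-a_n|$ I would write
\[
\int_0^T\int_\T|a|\,|v_n|^2\,\d x\,\d t\le\int_0^T\int_\T|a_n|\,|v_n|^2\,\d x\,\d t+\int_0^T\int_\T|a-a_n|\,|v_n|^2\,\d x\,\d t,
\]
where the first term tends to $0$ by construction. For the second term I would invoke the upper bound of Theorem~\ref{thm-spacetime-a} (equivalently the Strichartz estimate of Lemma~\ref{lem-Stri-LinftyL2}) applied to the weight $a-a_n$, which yields
\[
\int_0^T\int_\T|a-a_n|\,|v_n|^2\,\d x\,\d t\lesssim\|a-a_n\|_{L^1_x(\T;L^\infty_t([0,T]))}\,\|u_0^n\|^2_{L^2(\T)}=\|a-a_n\|_{L^1_x(\T;L^\infty_t([0,T]))}\longrightarrow 0,
\]
since $a_n\to a$ and $\|u_0^n\|_{L^2(\T)}=1$. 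Consequently $\int_0^T\int_\T|a|\,|v_n|^2\,\d x\,\d t\to 0$, and applying the fixed-weight observability gives
\[
1=\|u_0^n\|^2_{L^2(\T)}\le C'_a\int_0^T\int_\T|a|\,|v_n|^2\,\d x\,\d t\longrightarrow 0,
\]
the desired contradiction.

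I expect the only genuinely delicate point to be this transfer step: it is essential that the error bound be \emph{uniform in} $u_0^n$, and this is exactly what the norm-based upper estimate of Theorem~\ref{thm-spacetime-a} provides, with a constant independent of both the weight and the data. This uniformity is what allows the whole argument to bypass a strong-convergence or unique-continuation analysis of the sequence $u_0^n$, which would otherwise be the hard part in a classical compactness--uniqueness scheme. The precompactness hypothesis is used solely to extract a single limiting weight $a$, while the lower bound \eqref{equ-55-16} is used solely to guarantee that this limit is nontrivial so that Theorem~\ref{thm-spacetime-a} is applicable to it.
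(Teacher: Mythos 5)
Your proposal is correct and follows essentially the same route as the paper: a contradiction argument in which precompactness extracts a limiting weight $a$ with $\|a\|_{L^1_x(\T;L^\infty_t([0,T]))}\geq a_0$, the Strichartz bound of Lemma~\ref{lem-Stri-LinftyL2} (equivalently the upper estimate \eqref{equ-55-14}) transfers the vanishing observation from $a_n$ to $a$ uniformly in the normalized data, and the single-weight observability \eqref{equ-55-15-intro} then yields the contradiction. Your write-up even tidies a small notational slip in the paper (which indexes the weights and the data by different letters); no changes needed.
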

\begin{proof}
We argue by contradiction. Suppose that there exist sequences $u_{0n}\in L^2(\T)$ and $a_k\in \mathcal{A}$ such that
\begin{align}\label{equ-55-18}
\|u_{0n}\|_{L^2(\T)}=1, \quad \int_0^T\int_\T|a_k||e^{itP(D)}u_{0n}|^2\d x \d t\to 0, \quad \text{ as } n\to \infty.    
\end{align}
Since $a_k\in \mathcal{A}$ and $\mathcal{A}$ is precompact in $L^1_x(\T;L^\infty_t([0,T]))$, there exists a subsequence, still denoted by $a_k$, and $a\in L^1_x(\T;L^\infty_t[0,T])$ such that $a_k\to a$ in $L^1_x(\T;L^\infty_t([0,T]))$. This, together with the Strichartz estimate in Lemma \ref{lem-Stri-LinftyL2}, gives 
\begin{align}\label{equ-55-19}
\int_0^T\int_\T|a_k-a||e^{itP(D)}u_{0n}|^2\d x \d t&\leq \|a_k-a\|_{L^1_x(\T;L^\infty_t([0,T]))}\|e^{itP(D)}u_{0n}\|^2_{L^\infty_x(\T;L^2_t([0,T]))}\nonumber\\
&\lesssim  \|a_k-a\|_{L^1_x(\T;L^\infty_t([0,T]))}\|u_{0n}\|^2_{L^2(\T)} \to 0  
\end{align}
as $n\to \infty$. Combining \eqref{equ-55-18}-\eqref{equ-55-19}, we obtain
\begin{align}\label{equ-55-20}
 \int_0^T\int_\T|a||e^{itP(D)}u_{0n}|^2\d x \d t\to 0, \quad \text{ as } n\to \infty.     
\end{align}
Thanks to \eqref{equ-55-16}, we have $\|a_k\|_{L^1_x(\T;L^\infty_t([0,T]))}\geq a_0$. This implies $\|a\|_{L^1_x(\T;L^\infty_t([0,T]))}\geq a_0$. Thus by \eqref{equ-55-15-intro} and \eqref{equ-55-20}, we infer
$$
\|u_{0n}\|^2_{L^2(\T)}\leq C\int_0^T\int_\T |a||e^{itP(D)}u_{0n}|^2\d x \d t\to 0, \quad \text{ as } n\to \infty.     
$$
This leads to a contradiction with $\|u_{0n}\|_{L^2(\T)}=1$ for all $n\geq 1$.
\end{proof}

\section{Control of KdV equation in the rough setting}\label{sec: rough control of KdV}
This section is devoted to proving the main theorem \ref{thm: CKDV-N-intro}. We first analyze the linearized equation $\partial_tv+\partial_x^3v=\mathcal{L}(h)\mathbf{1}_{E_T\times F}$ and construct the control operator $\mathcal{K}$ (defined in \eqref{eq: defi-op-K}) in Subsection \ref{sec: linearized system}. Armed with the well-constructed $\mathcal{K}$, we finish the proof of Theorem \ref{thm: CKDV-N-intro} by a fixed point process in Bougain spaces in Subsection \ref{sec: nonlinear case}.
\subsection{Linearized system}\label{sec: linearized system}
 In general, if $u_0,u_1\in L^2_M(\T)$, let $\widetilde{u}_0=u_0-2M\pi\in L^2_0(\T)$ and $\widetilde{u}_1=u_1-2M\pi\in L^2_0(\T)$. It suffices to prove: $\exists h\in L^2([0,T]\times\T)$ such that the KdV equation
\begin{equation*}
\partial_tu+\partial_x^3u+u\partial_xu+2M\pi\partial_xu=\mathcal{L}(h)\mathbf{1}_{E_T\times F},
\end{equation*}
has a unique solution $u\in C([0,T];L^2(\T))$ satisfying that
\begin{equation*}
u(0,x)=\widetilde{u}_0(x)\in L^2_0(\T),u(T,x)=\widetilde{u}_1(x)\in L^2_0(\T).
\end{equation*}
This motivates us to consider the linearized system 
\begin{equation*}
\partial_tu+\partial_x^3u+2M\pi\partial_xu=\mathcal{L}(h)\mathbf{1}_{E_T\times F},
\end{equation*}

Since the extra $2M\pi\partial_x$ term does not effect the analysis, for the ease of the notation, for now on we only concentrate on the case $M= 0$:
\begin{equation}\label{eq: linear ckdv}
\partial_tv+\partial_x^3v=\mathcal{L}(h)\mathbf{1}_{E_T\times F},
\end{equation}
where the operator $\mathcal{L}$ is defined in \eqref{eq: defi-f} and $h\in L^2([0,T]\times\T)$. To establish the observability in Proposition \ref{prop: twist ob}, we present high-frequency estimates in Section \ref{sec: high-fre-est-kdv} and low-frequency analysis (especially augmented observability) in Section \ref{sec: low-fre-est-kdv}, equipped with the nice properties of $\mathcal{L}$ in Section \ref{sec: property of L}. We finish this part by constructing the control operator in Proposition \ref{prop: lckdv-result}.
\subsubsection{Basic properties for the operator $\mathcal{L}$}\label{sec: property of L}
In this sequel, we present some key properties of the operator $\mathcal{L}$, which we use later. We have the following lemmas.
\begin{lemma}\label{lem: 0-avg-op}
$\mathcal{L}:L^2(\T)\rightarrow L^2_0(\T)$ is a linear, bounded, self-adjoint operator. Moreover, for any $\psi(t)\in L^2((0,T))$, $\mathcal{L}\psi(t)\mathbf{1}_F(x)=\psi(t)\mathbf{1}_F(x)\mathcal{L}$. For any $h\in L^2([0,T]\times\T)$, we have $\|\mathcal{L}(h)\|_{L^2(E_T\times F)}=\|\mathcal{L}(h)\|_{L^2(E_T\times \T)}$.
\end{lemma}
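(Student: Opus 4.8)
My plan is to unpack the definition \eqref{eq: defi-f} and check the four assertions one at a time; each reduces to elementary bookkeeping once the operator is written compactly. Keeping $t$ as a parameter and writing $\Avg{h(t,\cdot)}{F}=\frac{1}{|F|}\int_F h(t,y)\,\d y$ for the spatial mean over $F$, the definition reads $\mathcal{L}(h)(t,x)=\frac{1}{|F|}\mathbf{1}_F(x)\bigl(h(t,x)-\Avg{h(t,\cdot)}{F}\bigr)$, so that $\mathcal{L}$ acts only in the spatial variable. The single algebraic fact behind everything is the idempotency $\mathbf{1}_F\mathbf{1}_F=\mathbf{1}_F$ together with $\int_F\mathbf{1}_F h=\int_F h$.

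First I would dispose of linearity and boundedness. Linearity is immediate, as $\mathcal{L}$ is a composition of multiplication by $\mathbf{1}_F$ and the linear subtraction of a spatial mean. For boundedness I would compute, for a.e.\ $t$, $\|\mathcal{L}(h)(t,\cdot)\|_{L^2(\T)}^2=\frac{1}{|F|^2}\int_F|h(t,x)-\Avg{h(t,\cdot)}{F}|^2\,\d x$ and use that the mean minimizes the $L^2$ deviation to bound this by $\frac{1}{|F|^2}\int_F|h(t,x)|^2\,\d x$, which gives $\|\mathcal{L}\|\leq|F|^{-1}$. That the range lies in $L^2_0(\T)$ follows by integrating $\mathcal{L}(h)$ over $\T$ in $x$: the defining subtraction is arranged precisely so that $\int_\T\mathcal{L}(h)(t,x)\,\d x=\frac{1}{|F|}\bigl(\int_F h-|F|\Avg{h}{F}\bigr)=0$, i.e.\ $\Avg{\mathcal{L}(h)}{\T}=0$.

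Next I would prove self-adjointness by a direct pairing. Computing $\langle\mathcal{L}\varphi,\phi\rangle_{L^2(\T)}$ gives the expression $\frac{1}{|F|}\int_F\varphi\overline{\phi}\,\d x-\Avg{\varphi}{F}\,\overline{\Avg{\phi}{F}}$, which is Hermitian-symmetric under the exchange $\varphi\leftrightarrow\phi$; hence $\langle\mathcal{L}\varphi,\phi\rangle=\langle\varphi,\mathcal{L}\phi\rangle$ and $\mathcal{L}^*=\mathcal{L}$.

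The commutation identity and the final norm equality both follow from the support of $\mathcal{L}(h)$ and the idempotency above. Since $\mathcal{L}$ acts only in $x$, multiplication by $\psi(t)$ commutes with it automatically, so it suffices to treat the factor $\mathbf{1}_F(x)$; I would verify the two identities $\mathcal{L}(\mathbf{1}_F h)=\mathcal{L}(h)$ (the outer $\mathbf{1}_F$ absorbs the inner one) and $\mathbf{1}_F\,\mathcal{L}(h)=\mathcal{L}(h)$ (as $\mathcal{L}(h)$ already carries the factor $\mathbf{1}_F$), which together yield $\mathcal{L}\,\mathbf{1}_F=\mathbf{1}_F\,\mathcal{L}$ and hence $\mathcal{L}\,(\psi\mathbf{1}_F)=(\psi\mathbf{1}_F)\,\mathcal{L}$. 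Finally, because $\mathcal{L}(h)(t,x)$ vanishes for $x\notin F$, the $x$-integral over $\T$ collapses to the integral over $F$, so $\|\mathcal{L}(h)\|_{L^2(E_T\times\T)}=\|\mathcal{L}(h)\|_{L^2(E_T\times F)}$. I do not expect a genuine obstacle: the statement is entirely elementary, and the only point needing care is tracking the two factors $\frac{1}{|F|}$ and the cancellations produced by $\mathbf{1}_F\mathbf{1}_F=\mathbf{1}_F$ in the commutation step, which I anticipate will be the most error-prone computation.
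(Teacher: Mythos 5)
Your proof is correct, and it is exactly the ``direct computation'' that the paper invokes but chooses to omit: the paper's only remark on this lemma is that it follows by elementary calculation, and your write-up supplies those details (boundedness via the mean-minimizing property with the explicit bound $\|\mathcal{L}\|\le|F|^{-1}$, the zero-average range, Hermitian symmetry of the pairing, and the two identities $\mathcal{L}(\mathbf{1}_F h)=\mathcal{L}(h)=\mathbf{1}_F\,\mathcal{L}(h)$ driving both the commutation and the norm equality). No gaps; the argument can stand verbatim as the omitted proof.
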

The proof of Lemma \ref{lem: 0-avg-op} follows by direct computation and we choose to omit its details here. Next lemma concerns the matrix representation of $\mathcal{L}$ under the orthogonal basis $\{e^{ikx}\}_{k\in \Z}$.
\begin{lemma}
For any $h\in L^2_0(\T)$, $\mathcal{L}(h)$ can be represented by
\begin{equation}\label{eq: L(k,l)-defi-original}
\mathcal{L}(h)=\sum_{k,l\in\Z}L(k,l)\widehat{h}(k)e^{ilx},
\end{equation}
where $\widehat{h}(k)=\frac{1}{2\pi}\int_{\T}h(x)e^{-ikx}\d x$ and $L(k,l):=\frac{1}{2\pi}\int_{\T}\mathcal{L}(e^{ikx})e^{-ilx}\d x$. Moreover, we have the following properties
\begin{gather}
L(k,l)=\widehat{g}(l-k)-2\pi\widehat{g}(-k)\widehat{g}(l),\label{eq: coeff-op-L}\\
2\pi\sum_{l\in\Z}|L(k,l)|^2=\|\mathcal{L}(e^{ikx})\|^2_{L^2(\T)}=\frac{1}{|F|}(1-4\pi^2|\widehat{g}(k)|^2),\label{eq: g-k-leq1}\\
2\pi\sum_{l\in\Z}L(k,l)\overline{L(m,l)}=\poscals{\mathcal{L}(e^{ikx})}{\mathcal{L}(e^{imx})}_{L^2(\T)}=\frac{2\pi}{|F|}\left(\widehat{g}(m-k)-2\pi\overline{\widehat{g}(k)}\widehat{g}(m)\right).\label{eq: inner-coeff-km}
\end{gather}
\end{lemma}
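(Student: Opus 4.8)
The plan is to reduce all four assertions to the single closed-form identity
\[
\mathcal{L}(e^{ikx})=g(x)\bigl(e^{ikx}-2\pi\widehat{g}(-k)\bigr),\qquad g=\tfrac{1}{|F|}\mathbf{1}_F,
\]
from which everything follows by direct computation. To obtain it, I would apply the definition \eqref{eq: defi-f} to the single exponential $e^{ikx}$: the prefactor $\frac{1}{|F|}\mathbf{1}_F$ is exactly $g$, while the subtracted average equals $\frac{1}{|F|}\int_\T e^{iky}\mathbf{1}_F(y)\d y=\int_\T e^{iky}g(y)\d y=2\pi\widehat{g}(-k)$, using the paper's Fourier convention $\widehat{g}(k)=\frac{1}{2\pi}\int_\T g(x)e^{-ikx}\d x$. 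The matrix representation \eqref{eq: L(k,l)-defi-original} is then immediate: expanding $h=\sum_k\widehat{h}(k)e^{ikx}$ in $L^2$ and using the boundedness and linearity of $\mathcal{L}$ (Lemma \ref{lem: 0-avg-op}) to pass $\mathcal{L}$ through the sum gives $\mathcal{L}(h)=\sum_k\widehat{h}(k)\mathcal{L}(e^{ikx})=\sum_{k,l}L(k,l)\widehat{h}(k)e^{ilx}$, where $L(k,l)$ is by definition the $l$-th Fourier coefficient of $\mathcal{L}(e^{ikx})$.

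For \eqref{eq: coeff-op-L} I would read off that $l$-th coefficient directly from the closed form: the term $\frac{1}{2\pi}\int_\T g(x)e^{i(k-l)x}\d x=\widehat{g}(l-k)$ comes from the first factor, and $-2\pi\widehat{g}(-k)\cdot\frac{1}{2\pi}\int_\T g(x)e^{-ilx}\d x=-2\pi\widehat{g}(-k)\widehat{g}(l)$ from the second, yielding $L(k,l)=\widehat{g}(l-k)-2\pi\widehat{g}(-k)\widehat{g}(l)$.

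The two spectral identities \eqref{eq: g-k-leq1} and \eqref{eq: inner-coeff-km} share the same structure; in fact \eqref{eq: g-k-leq1} is precisely the diagonal case $m=k$ of \eqref{eq: inner-coeff-km} (recall $\widehat{g}(0)=\frac{1}{2\pi}$ since $\int_\T g=1$), so I would prove only the inner-product version. The first equality, $\poscals{\mathcal{L}(e^{ikx})}{\mathcal{L}(e^{imx})}_{L^2(\T)}=2\pi\sum_l L(k,l)\overline{L(m,l)}$, is Parseval's identity, valid because $\{e^{ilx}\}$ is orthogonal with $\|e^{ilx}\|_{L^2}^2=2\pi$. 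For the explicit value I would compute the $L^2$ inner product from the closed form, where two simplifications are decisive: $g^2=\frac{1}{|F|}g$ (since $g$ is a multiple of an indicator) and $g$ is real, so $\widehat{g}(-k)=\overline{\widehat{g}(k)}$. Writing $c_k=2\pi\widehat{g}(-k)$ and expanding $\int_\T g\,(e^{ikx}-c_k)(e^{-imx}-\overline{c_m})\d x$ term by term, the pure-exponential term gives $2\pi\widehat{g}(m-k)$, and using $\int_\T g\,e^{\pm imx}\d x=2\pi\widehat{g}(\mp m)$ together with $\int_\T g=1$, the remaining three terms collapse to $-c_k\overline{c_m}$, leaving $2\pi\widehat{g}(m-k)-c_k\overline{c_m}$; real-valuedness turns $c_k\overline{c_m}$ into $4\pi^2\overline{\widehat{g}(k)}\widehat{g}(m)$, and the overall factor $\frac{1}{|F|}$ from $g^2$ produces exactly \eqref{eq: inner-coeff-km}.

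There is no genuine analytic obstacle here: the entire lemma is a bookkeeping computation. The only points demanding care are the consistent use of the paper's Fourier normalization, so that all factors of $2\pi$ land correctly, and the systematic exploitation of the reality of $g$ to collapse conjugate pairs — this cancellation is what makes the middle cross terms vanish and produces the clean right-hand sides.
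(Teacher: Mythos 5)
Your proposal is correct and follows essentially the same route as the paper: both expand $h$ by linearity into the basis $\{e^{ikx}\}$, read off $L(k,l)$ by direct integration against $e^{-ilx}$, invoke Plancherel/Parseval for the sums over $l$, and compute the inner product $\poscals{\mathcal{L}(e^{ikx})}{\mathcal{L}(e^{imx})}_{L^2(\T)}$ directly using the idempotency of the indicator (your $g^2=\tfrac{1}{|F|}g$) and the realness of $g$, with the norm identity obtained as the diagonal case $k=m$. Your packaging of the computation through the closed form $\mathcal{L}(e^{ikx})=g(x)\bigl(e^{ikx}-2\pi\widehat{g}(-k)\bigr)$, and your explicit appeal to boundedness of $\mathcal{L}$ to pass it through the infinite sum, are only cosmetic refinements of the paper's argument.
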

\begin{proof}
For any $h\in L^2_0(\T)$, we expand $h$ into the Fourier series $h(x)=\sum_{k\in\Z}\widehat{h}(k)e^{ikx}$, where $\widehat{h}(k)=\frac{1}{2\pi}\int_{\T}h(x)e^{-ikx}\d x$. In particular, since $h\in L^2_0(\T)$, we have $h_0\equiv0$.  Then by the linearity of the operator $\mathcal{L}$, we know
\begin{equation}
\mathcal{L}(h)=\sum_{k\in\Z}\widehat{h}(k)\mathcal{L}(e^{ikx})=\sum_{k,l\in\Z}L(k,l)\widehat{h}(k)e^{ilx},
\end{equation}
where $L(k,l):=\frac{1}{2\pi}\int_{\T}\mathcal{L}(e^{ikx})e^{-ilx}\d x$. More precisely, by definition,
\begin{align*}
\int_{\T}\mathcal{L}(e^{ikx})e^{-ilx}\d x&=\frac{1}{|F|}\int_{\T}\mathbf{1}_{F}(x)\left(e^{ikx}-\frac{1}{|F|}\int_{\T}\mathbf{1}_F(y)e^{iky}\d y\right)e^{-ilx}\d x\\
&=\int_{\T}g(x)e^{i(k-l)x}\d x-\int_{\T}g(y)e^{iky}\d y\int_{\T}g(x)e^{-ilx}\d x.
\end{align*}
Recall that $g(x):=\frac{1}{|F|}\mathbf{1}_F(x),\forall x \in\T$. Then, we conclude \eqref{eq: coeff-op-L} as
\begin{equation*}
L(k,l)=\widehat{g}(l-k)-2\pi\widehat{g}(-k)\widehat{g}(l).
\end{equation*}
We consider the inner product $\frac{1}{2\pi}\poscals{\mathcal{L}(e^{ikx})}{e^{ilx}}_{L^2(\T)}$ next. Indeed, applying the Plancherel Theorem, we derive the following two identities:
\begin{equation*}
2\pi\sum_{l\in\Z}|L(k,l)|^2=\|\mathcal{L}(e^{ikx})\|^2_{L^2(\T)},\;\;\;
2\pi\sum_{l\in\Z}L(k,l)\overline{L(m,l)}=\poscals{\mathcal{L}(e^{ikx})}{\mathcal{L}(e^{imx})}_{L^2(\T)}.
\end{equation*}
In addition, we have more explicit formulas for these two identities above. Indeed, direct computation yields that
\begin{align*}
&\poscals{\mathcal{L}(e^{ikx})}{\mathcal{L}(e^{imx})}_{L^2(\T)}\\
=&\frac{1}{|F|^2}\int_{\T}\mathbf{1}_{F}(x)\left(e^{ikx}-\frac{1}{|F|}\int_{\T}\mathbf{1}_F(y)e^{iky}\d y\right)\overline{\left(e^{imx}-\frac{1}{|F|}\int_{\T}\mathbf{1}_F(y)e^{imy}\d y\right)}\d x\\
=&\frac{1}{|F|^2}\int_{\T}\left(\mathbf{1}_{F}(x)e^{i(k-m)x}-2\pi\overline{\widehat{g}(k)}\mathbf{1}_{F}(x)e^{-imx}-2\pi\widehat{g}(m)\mathbf{1}_{F}(x)e^{ikx}+4\pi^2\widehat{g}(m)\overline{\widehat{g}(k)}\right)\d x\\
=&\frac{2\pi}{|F|}\left(\widehat{g}(m-k)-2\pi\overline{\widehat{g}(k)}\widehat{g}(m)\right).
\end{align*}
Here we use several times the fact that $\widehat{g}(-k)=\overline{\widehat{g}(k)}$. In particular, if $k=m$, we have
\begin{equation*} 
\|\mathcal{L}(e^{ikx})\|_{L^2(\T)}^2=\frac{1}{|F|}\left(2\pi\widehat{g}(0)-4\pi^2|\widehat{g}(k)|^2\right)
=\frac{1}{|F|}(1-4\pi^2|\widehat{g}(k)|^2).   
\end{equation*}
Thus the proof is complete.
\end{proof}
The next lemma establishes the coercive estimate of $\|\mathcal{L}(e^{ikx})\|_{L^2(\T)}$.
\begin{lemma}\label{lem: coercive-L_k}
There exists a constant $\delta=\delta(F)>0$ such that 
\begin{equation}\label{eq: coercive-ineq}
\|\mathcal{L}(e^{ikx})\|_{L^2(\T)}^2>\delta>0,\forall k\in\Z,k\neq0.
\end{equation}
\end{lemma}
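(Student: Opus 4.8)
The plan is to read off the lower bound directly from the explicit identity \eqref{eq: g-k-leq1}, namely $\|\mathcal{L}(e^{ikx})\|_{L^2(\T)}^2=\frac{1}{|F|}\bigl(1-4\pi^2|\widehat{g}(k)|^2\bigr)$, so that the claim reduces to producing a uniform strict upper bound $4\pi^2|\widehat{g}(k)|^2\leq 1-c$ for some $c=c(F)>0$ and all $k\neq 0$. Since $g=\frac{1}{|F|}\mathbf{1}_F$, one computes $2\pi\widehat{g}(k)=\frac{1}{|F|}\int_F e^{-ikx}\d x$, whence $4\pi^2|\widehat{g}(k)|^2=|F|^{-2}\bigl|\int_F e^{-ikx}\d x\bigr|^2\leq 1$ by the triangle inequality, with equality forcing $e^{-ikx}$ to be a.e.\ constant on $F$.

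First I would record the pointwise strict inequality. For each fixed $k\neq 0$, the level set $\{x\in\T: e^{-ikx}=c\}$ is finite (it has $|k|$ points) for every unimodular $c$, hence of measure zero; since $|F|>0$, the function $e^{-ikx}$ cannot be a.e.\ constant on $F$, and therefore the triangle inequality is strict: $4\pi^2|\widehat{g}(k)|^2<1$, equivalently $\|\mathcal{L}(e^{ikx})\|_{L^2(\T)}^2>0$. This yields positivity for each individual frequency, but not yet a gap that is uniform in $k$.

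To upgrade to uniformity I would split $\Z\setminus\{0\}$ into a high- and a low-frequency regime. Because $g\in L^1(\T)$, the Riemann--Lebesgue lemma gives $\widehat{g}(k)\to 0$ as $|k|\to\infty$, so there exists $K=K(F)$ with $4\pi^2|\widehat{g}(k)|^2\leq\frac12$ for all $|k|>K$; in this range $\|\mathcal{L}(e^{ikx})\|_{L^2(\T)}^2\geq\frac{1}{2|F|}$. For the finitely many remaining frequencies $0<|k|\leq K$, the strict inequality from the previous step gives a positive value for each, so that $\delta_0:=\min_{0<|k|\leq K}\|\mathcal{L}(e^{ikx})\|_{L^2(\T)}^2>0$ is a positive minimum over a finite set. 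Setting $\delta:=\min\{\delta_0,\tfrac{1}{2|F|}\}$ then establishes \eqref{eq: coercive-ineq}.

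The only genuine subtlety, and hence the crux of the argument, is the uniformity over infinitely many frequencies: positivity for each single $k$ is immediate, but a naive infimum over all $k\neq 0$ could in principle vanish. The Riemann--Lebesgue decay is precisely what prevents any degeneration at high frequencies, reducing the quantitative content to the finite minimum handled above; no deeper input is required.
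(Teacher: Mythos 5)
Your proposal is correct and follows essentially the same route as the paper: the identity \eqref{eq: g-k-leq1}, strict failure of equality in the triangle inequality for each fixed $k\neq 0$ (the paper phrases this as $e^{-ikx}\mathbf{1}_F$ not being a constant multiple of $\mathbf{1}_F$, which your level-set argument makes more explicit), and the Riemann--Lebesgue lemma to reduce uniformity to a finite minimum over low frequencies. The only cosmetic point is that with $\delta:=\min\{\delta_0,\tfrac{1}{2|F|}\}$ the inequality at the minimizing low frequency may be an equality rather than strict, which is fixed by taking, say, $\tfrac{1}{2}\min\{\delta_0,\tfrac{1}{2|F|}\}$ instead.
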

\begin{proof}
We first claim that $\|\mathcal{L}(e^{ikx})\|_{L^2(\T)}^2>0$, for any $k\in\Z,k\neq0$. Indeed, according to \eqref{eq: g-k-leq1}, we have $\|\mathcal{L}(e^{ikx})\|_{L^2(\T)}^2=\frac{1}{|F|}(1-4\pi^2|\widehat{g}(k)|^2)$. Using the explicit form of $\widehat{g}(k)$, we compute
\begin{equation*}
4\pi^2|\widehat{g}(k)|^2=\left|\int_{\T}\frac{1}{|F|}\mathbf{1}_F(x)e^{-ikx}\d x\right|^2\leq \left(\int_{\T}\left|\frac{1}{|F|}\mathbf{1}_F(x)e^{-ikx}\right|\d x\right)^2=1.    
\end{equation*}
Since $g=\frac{1}{|F|}\mathbf{1}_F$ is real-valued and positive, $\frac{1}{|F|}\mathbf{1}_F(x)e^{-ikx}$ cannot be a constant multiple of $\mathbf{1}_F(x)$ on $\T$. Therefore, the equality cannot hold, and we obtain the inequality 
\begin{equation*}
\|\mathcal{L}(e^{ikx})\|_{L^2(\T)}^2=\frac{1}{|F|}(1-4\pi^2|\widehat{g}(k)|^2)>0.
\end{equation*}
In addition, due to the Riemann--Lebesgue Lemma, we know that $|\widehat{g}(k)|\to0$ as $|k|\to\infty$, which implies that $\|\mathcal{L}(e^{ikx})\|_{L^2(\T)}^2\to\frac{1}{|F|}$ as $|k|\to\infty$. Then there exists a universal constant $\delta=\delta(F)>0$ such that $\|\mathcal{L}(e^{ikx})\|_{L^2(\T)}^2>\delta$ for $\forall k\in\Z,k\neq0$.
\end{proof}

We introduce the adjoint system associated with \eqref{eq: linear ckdv} as follows. Let $w$ be the solution to:
\begin{equation}\label{eq: adjoint system}
(\partial_t+\partial_x^3)w=0,\;w(0,x)=w_0(x).
\end{equation}
Let $S(t)$ be the linear unitary semi-group generated by $-\partial_x^3$. Then the solution $w$ to \eqref{eq: adjoint system} can be denoted as $w(t)=S(t)w_0$. We aim to prove the following ``twisted" observability inequality.
\begin{proposition}\label{prop: twist ob}
There exists a constant $C=C(E_T\times F)>0$ such that for any $\varphi\in L^2_0(\T)$, we have the following observability inequality:
\begin{equation}\label{eq: twist-ob}
\|\varphi\|^2_{L^2(\T)}\leq C\iint_{E_T\times F}|\mathcal{L}(S(t)\varphi)|^2\d x\d t.
\end{equation}
\end{proposition}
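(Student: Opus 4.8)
The plan is to follow the three-step scheme of Section~\ref{sec: Observability from space-time measurable sets}, adapted to the ``smeared'' modes produced by $\mathcal{L}$. First I would remove the spatial set $F$: by Lemma~\ref{lem: 0-avg-op} one has $\|\mathcal{L}(h)\|_{L^2(E_T\times F)}=\|\mathcal{L}(h)\|_{L^2(E_T\times\T)}$, so it suffices to prove \eqref{eq: twist-ob} with $F$ replaced by $\T$. After reducing the time variable to the torus as in Section~\ref{sec: general ob}, expanding $\varphi=\sum_k\widehat{\varphi}(k)e^{ikx}$ and using \eqref{eq: L(k,l)-defi-original} gives $\mathcal{L}(S(t)\varphi)=\sum_{k}\widehat{\varphi}(k)\,\phi_k$, where $\phi_k(t,x):=\mathcal{L}(e^{ikx})e^{ik^3t}=\sum_{l\in\Z}L(k,l)e^{i(lx+k^3t)}$. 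Since $k\mapsto k^3$ is injective, the $\{\phi_k\}$ have disjoint Fourier support in the time frequency and are therefore orthogonal in $L^2(\T^2)$, with $\|\phi_k\|_{L^2(\T^2)}^2=2\pi\|\mathcal{L}(e^{ikx})\|_{L^2(\T)}^2$; note also $\phi_0=\mathcal{L}(1)=0$, consistent with $\varphi\in L^2_0(\T)$.

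For the high-frequency estimate I would compute, via Plancherel in $x$ on $E_T\times\T$, that $\iint_{E_T\times\T}|\sum_{|k|>N}\widehat{\varphi}(k)\phi_k|^2$ equals a diagonal term $|E_T|\sum_{|k|>N}\|\mathcal{L}(e^{ikx})\|_{L^2(\T)}^2|\widehat{\varphi}(k)|^2$ plus an off-diagonal term involving $A_{k,k'}\,\widehat{\varphi}(k)\overline{\widehat{\varphi}(k')}\,\widehat{\mathbf{1}_{E_T}}(k^3-k'^3)$, where $A_{k,k'}=\langle\mathcal{L}(e^{ikx}),\mathcal{L}(e^{ik'x})\rangle$. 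The coercivity Lemma~\ref{lem: coercive-L_k} bounds the diagonal below by $|E_T|\,\delta\sum_{|k|>N}|\widehat{\varphi}(k)|^2$. For the off-diagonal I would apply a Schur/Cauchy--Schwarz bound: by \eqref{eq: g-k-leq1}--\eqref{eq: inner-coeff-km} and self-adjointness of $\mathcal{L}$ one has $\sum_{k'}|A_{k,k'}|^2\lesssim 1$ uniformly in $k$, while for fixed $k$ the integers $k^3-k'^3$ are distinct and satisfy $|k^3-k'^3|\geq M(N)\to\infty$, so $\sum_{k'}|\widehat{\mathbf{1}_{E_T}}(k^3-k'^3)|^2$ is dominated by a tail of the convergent series $\sum_m|\widehat{\mathbf{1}_{E_T}}(m)|^2$ and tends to $0$. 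Choosing $N$ large makes the off-diagonal part smaller than half the diagonal, giving the analogue of Lemma~\ref{lem-high-freq}.

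Next I would establish the perturbative stability (the analogue of Lemma~\ref{lem: perturbative}): the high-frequency estimate persists with $E_T$ replaced by $E_T\cap(E_T-h)$ for $|h|$ small. This rests on an $L^4(\T^2)$ Strichartz bound $\|\mathcal{L}(S(t)\varphi)\|_{L^4(\T^2)}\lesssim\|\varphi\|_{L^2(\T)}$, which I would deduce from the KdV $L^4$ estimate (Lemma~\ref{lem-str} with $p(k)=k^3$) after writing $\mathcal{L}(S(t)\varphi)=g\,(S(t)\varphi-\langle S(t)\varphi\rangle_F)$ and bounding the average term by H\"older; a H\"older estimate on the small set $(E_T\setminus(E_T-h))\times\T$ then gives the claim. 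Finally I would prove the augmented observability for the modes $\phi_k$: given the translated high-frequency estimate on an index set $\Lambda$ and $k_0\notin\Lambda$ with $0<|k_0|\leq N$, a point $k_0$ can be adjoined. This is the $\phi_k$-version of Lemma~\ref{lem-add-freq}, run by the same weak-compactness argument; the key structural input is that time-translation acts diagonally, $\phi_k(\cdot+h,\cdot)=e^{ik^3h}\phi_k$, so that $g:=f(\cdot+h)-e^{ik_0^3h}f$ still has Fourier support in $\Lambda$ and vanishes on $(E_T\cap(E_T-h))\times\T$; injectivity of $k\mapsto k^3$ then forces $\widehat{f}(k)=0$ on $\Lambda$, and $\|\phi_{k_0}\|_{L^2(E_T\times\T)}^2=|E_T|\,\|\mathcal{L}(e^{ik_0x})\|_{L^2(\T)}^2>0$ rules out the remaining mode. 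Iterating over the finitely many $k_0$ with $0<|k_0|\leq N$ yields \eqref{eq: twist-ob}.

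The main obstacle is precisely the feature highlighted in Figure~\ref{fig:frequ}: because $\mathcal{L}$ spreads each temporal mode $e^{ik^3t}$ across all spatial frequencies $l$ through $L(k,l)$, the quantity $\mathcal{L}(S(t)\varphi)$ is no longer a lacunary exponential sum with free $\ell^2$ coefficients, so Lemma~\ref{lem-add-freq} cannot be invoked verbatim. The scheme closes only because the coercivity estimate of Lemma~\ref{lem: coercive-L_k} supplies the uniform-in-$k$ lower bound $\|\mathcal{L}(e^{ikx})\|_{L^2(\T)}^2>\delta$ controlling the diagonal, and because the time-translation structure $\phi_k(\cdot+h,\cdot)=e^{ik^3h}\phi_k$ together with injectivity of the cube map allows the translation/compactness argument to survive the frequency spreading.
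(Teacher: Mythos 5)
Your proposal is correct, and its architecture is exactly the paper's: reduction from $E_T\times F$ to $E_T\times\T$ via Lemma~\ref{lem: 0-avg-op}, a high-frequency estimate resting on the coercivity of Lemma~\ref{lem: coercive-L_k}, stability of that estimate under small time translations, an augmented observability run by weak compactness plus the translation/phase trick, and a finite iteration over the modes $0<|k_0|\leq N$. Two sub-steps are, however, executed with genuinely different tools. (i) For the off-diagonal part of the high-frequency estimate you run a Schur test based on $\sum_{k'}\bigl|\langle\mathcal{L}(e^{ikx}),\mathcal{L}(e^{ik'x})\rangle\bigr|^2\lesssim1$ (valid, by \eqref{eq: inner-coeff-km} and $g\in L^2(\T)$), whereas Proposition~\ref{prop: hf-est-kdv} uses the entrywise bound $|\langle\mathcal{L}(e^{ikx}),\mathcal{L}(e^{imx})\rangle|\leq 1/|F|$ together with the counting bound $\Theta\leq2$; both hinge on the separation $|k^3-m^3|>N_0^2$ and the Plancherel tail of $\widehat{\mathbf{1}_{E_T}}$, so this difference is cosmetic. (ii) The substantive divergence is the translation-stability step: you derive an $L^4(\T^2)$ Strichartz bound for the twisted flow by factoring $\mathcal{L}(S(t)\varphi)=g\,(S(t)\varphi-\langle S(t)\varphi\rangle_F)$, invoking Lemma~\ref{lem-str} with $p(k)=k^3$ for the free evolution and a crude bound on the average, then conclude by H\"older on $(E_T\setminus(E_T-h))\times\T$. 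The paper's corresponding lemma (the one yielding \eqref{eq: stability-hf-est-kdv}) never proves any $L^4$ bound for $\mathcal{L}(S(t)\varphi)$; it estimates the contribution of $E_T\setminus(E_T-h)$ by Cauchy--Schwarz and Plancherel alone, using $\sum_l|L(k,l)|^2\leq1/|F|$ to produce the same small factor $|E_T\setminus(E_T-h)|^{1/2}$. Your factoring observation buys something real: since the $L^4$ bound holds for $\mathcal{L}(S(t)\psi)$ for \emph{every} $\psi\in L^2_0(\T)$, the identical H\"older argument re-establishes stability after each low mode is adjoined, making explicit the uniformity needed to close the iteration (a point the paper leaves implicit); conversely, the paper's Plancherel route shows the scheme requires no $\Lambda(4)$/Strichartz input once the frequencies are smeared off the cubic curve, which is more in the spirit of its general method. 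Your augmented step coincides with Lemma~\ref{lem-add-freq-kdv} up to the cosmetic change that the paper first divides out the phase $e^{ik_0^3t}$ and works mode-by-mode in $l$, while you work globally with $g=f(\cdot+h,\cdot)-e^{ik_0^3h}f$; both conclude via injectivity of $k\mapsto k^3$ and the lower bound $\|\phi_{k_0}\|^2_{L^2(E_T\times\T)}=|E_T|\,\|\mathcal{L}(e^{ik_0x})\|^2_{L^2(\T)}\geq|E_T|\delta>0$ to eliminate the residual constant.
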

Our proof of Proposition \ref{prop: twist ob} is based on the high-frequency/low-frequency scheme that we used in Section \ref{sec: Observability from space-time measurable sets}. Let $\varphi(x)=\sum_{k\neq0}\widehat{\varphi}(k)e^{ikx}$. Then the solution $w(t):=S(t)\varphi$ satisfies \eqref{eq: adjoint system} with $w(0,x)=\varphi(x)$, and has the Fourier expansion $w(t,x)=\sum_{k\neq0}\widehat{\varphi}(k)e^{ik^3t}e^{ikx}$. Therefore, we deduce that $\mathcal{L}(w)=\sum_{k\neq0,l\in\Z}e^{ik^3t+ilx}L(k,l)\widehat{\varphi}(k)=\mathcal{L}_H(w)+\mathcal{L}_L(w)$, where
\begin{gather}
 \mathcal{L}_H(w):= \sum_{(k,l)\in Q^c}e^{i(k^3,  l)(t, x)} L(k, l)\widehat{\varphi}(k)\label{eq: hf-part-Lw}
\end{gather}
denotes the high-frequency part, $Q:=[-N_0,N_0]\setminus\{0\}\times\Z$. The remainder term, $\mathcal{L}_L(w)$, 
denotes the low-frequency part.
In the sequel, we deal with the high-frequency part and low-frequency part, respectively.
\subsubsection{High frequency estimates}\label{sec: high-fre-est-kdv}
\begin{proposition}\label{prop: hf-est-kdv}
Let $\varphi(x)\in L^2_0(\T)$ with $\varphi(x)=\sum_{k\neq0}\widehat{\varphi}(k)e^{ikx}$. Then, there exist a positive integer  $N_0$ and a constant $C_{H}=C_H(E_T,\delta)>0$ such that 
\begin{equation}\label{eq: high-frequency-est-kdv}
\sum_{|k|>N_0} |\widehat{\varphi}(k)|^2
 \leq C_H \int_{\T}\int_{E_T}|\sum_{(k,l)\in Q^c}e^{i(k^3,  l)(t, x)} L(k, l)\widehat{\varphi}(k)|^2\d t\d x,
\end{equation}
where $Q:=[-N_0,N_0]\setminus\{0\}\times\Z$.
\end{proposition}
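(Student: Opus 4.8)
The plan is to expand the square on the right of \eqref{eq: high-frequency-est-kdv}, isolate the diagonal as the main term, and show the off-diagonal is negligible once $N_0$ is large. First I would set $a_k=\widehat\varphi(k)$ and $\Phi_k(x):=\mathcal{L}(e^{ikx})=\sum_{l\in\Z}L(k,l)e^{ilx}$, so that the observed quantity is $\sum_{(k,l)\in Q^c}e^{i(k^3t+lx)}L(k,l)\widehat\varphi(k)=\sum_{|k|>N_0}a_k e^{ik^3t}\Phi_k(x)$. Integrating first in $x$ over the full torus (the right-hand side of \eqref{eq: high-frequency-est-kdv} integrates $x$ over all of $\T$) and using Plancherel reduces everything to the Gram entries $\poscals{\Phi_{k_1}}{\Phi_{k_2}}_{L^2(\T)}$ of \eqref{eq: inner-coeff-km}. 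Concretely I would obtain
\[
\int_\T\int_{E_T}\Big|\sum_{|k|>N_0}a_ke^{ik^3t}\Phi_k\Big|^2\,\d t\,\d x=\sum_{k_1,k_2}a_{k_1}\overline{a_{k_2}}\Big(\int_{E_T}e^{i(k_1^3-k_2^3)t}\,\d t\Big)\poscals{\Phi_{k_1}}{\Phi_{k_2}}_{L^2(\T)}.
\]
The diagonal $k_1=k_2$ equals $|E_T|\sum_{|k|>N_0}|a_k|^2\|\Phi_k\|_{L^2(\T)}^2$, which Lemma \ref{lem: coercive-L_k} bounds below by $|E_T|\,\delta\sum_{|k|>N_0}|a_k|^2$, the desired main term.

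The heart of the matter is the off-diagonal sum, and the key idea is to split the Gram entry \eqref{eq: inner-coeff-km} into its two structurally different pieces,
\[
\poscals{\Phi_{k_1}}{\Phi_{k_2}}_{L^2(\T)}=\tfrac{2\pi}{|F|}\widehat g(k_2-k_1)-\tfrac{4\pi^2}{|F|}\overline{\widehat g(k_1)}\,\widehat g(k_2),
\]
to be treated by two different mechanisms. Writing $\int_{E_T}e^{i(k_1^3-k_2^3)t}\d t=2\pi\widehat{\mathbf 1_{E_T}}(k_2^3-k_1^3)$, the contribution of the convolution piece is $A:=\sum_{k_1\neq k_2}a_{k_1}\overline{a_{k_2}}\,\widehat{\mathbf 1_{E_T}}(k_2^3-k_1^3)\,\widehat g(k_2-k_1)$. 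Since $\widehat{\mathbf 1_{E_T}}(k_2^3-k_1^3)\widehat g(k_2-k_1)=\widehat H(\lambda_{k_2}-\lambda_{k_1})$ with $H(t,x):=\mathbf 1_{E_T}(t)g(x)\in L^2(\T^2)$ and $\lambda_k:=(k^3,k)$, this is exactly the off-diagonal expression already handled in Lemma \ref{lem-high-freq}. I would therefore repeat its Cauchy--Schwarz argument verbatim: the multiplicity bound $\Theta\leq d-1$ from Lemma \ref{lem-str}, together with the largeness of $|\lambda_{k_2}-\lambda_{k_1}|$ for $|k_i|>N_0$, gives $|A|\leq\big(\Theta\sum_{|\alpha|>N_0'}|\widehat H(\alpha)|^2\big)^{1/2}\sum_{|k|>N_0}|a_k|^2$, and the tail $\sum_{|\alpha|>N_0'}|\widehat H(\alpha)|^2\to0$ since $H\in L^2(\T^2)$.

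The remaining rank-one piece is the main obstacle, since the time frequencies $\{k^3\}$ are \emph{not} a $\Lambda(4)$ set and a naive estimate of $\sum_{k_1\neq k_2}|\widehat{\mathbf 1_{E_T}}(k_2^3-k_1^3)|^2$ would carry the unbounded multiplicity of representations of an integer as a difference of two cubes. The resolution is to use the factorized structure: setting $p_k:=a_k\overline{\widehat g(k)}$, the rank-one contribution is $B:=\sum_{k_1\neq k_2}p_{k_1}\overline{p_{k_2}}\widehat{\mathbf 1_{E_T}}(k_2^3-k_1^3)$, and restoring the diagonal shows $B$ differs from $\tfrac1{2\pi}\int_{E_T}|\sum_k p_ke^{ik^3t}|^2\d t$ only by $\tfrac{|E_T|}{2\pi}\sum|p_k|^2$; hence the trivial bound $\int_{E_T}\leq\int_\T$ and Plancherel yield $|B|\lesssim\|\{p_k\}\|_{\ell^2}^2$. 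Now the Riemann--Lebesgue decay of $\widehat g$ (used already in Lemma \ref{lem: coercive-L_k}) supplies the gain, $\|\{p_k\}\|_{\ell^2}^2=\sum_{|k|>N_0}|a_k|^2|\widehat g(k)|^2\leq\big(\sup_{|k|>N_0}|\widehat g(k)|^2\big)\sum_{|k|>N_0}|a_k|^2=o(1)\sum_{|k|>N_0}|a_k|^2$. Combining the two estimates, the whole off-diagonal sum is $o(1)\sum_{|k|>N_0}|a_k|^2$; choosing $N_0$ so large that it stays below $\tfrac12|E_T|\delta\sum_{|k|>N_0}|a_k|^2$ absorbs it into the diagonal and yields \eqref{eq: high-frequency-est-kdv} with $C_H\sim(|E_T|\delta)^{-1}$.
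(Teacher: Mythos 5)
Your proposal is correct, and on the decisive off-diagonal estimate it takes a genuinely different---and in fact more careful---route than the paper. The paper expands the same Gram-matrix expression, bounds each entry $\poscals{\mathcal{L}(e^{ikx})}{\mathcal{L}(e^{imx})}_{L^2(\T)}$ in absolute value by $1/|F|$, and then applies Cauchy--Schwarz to $\sum_{k\neq m}\widehat{\mathbf{1}_{E_T}}(m^3-k^3)\widehat{\varphi}(k)\overline{\widehat{\varphi}(m)}$, invoking a multiplicity bound $\Theta\leq 2$ for the equation $l^3-k^3=\alpha$ ``according to the proof of Lemma \ref{lem-str}''. But Lemma \ref{lem-str} bounds the multiplicity of the \emph{vector} map $(k,l)\mapsto(l-k,\,l^3-k^3)\in\Z^2$, whereas the paper's Cauchy--Schwarz step needs the multiplicity of the \emph{scalar} map $(k,l)\mapsto l^3-k^3$ (the paper's own definition of $\Theta$ there, with $\alpha\in\Z^2\setminus\{0\}$ but a scalar equation, betrays the conflation). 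The scalar multiplicity is \emph{not} uniformly bounded: by the classical Hardy--Wright/taxicab construction, an integer can have arbitrarily many representations as a difference of two cubes, and since only finitely many of these can involve an entry of size $\leq N_0$, the restriction $|k|,|m|>N_0$ does not rescue a uniform bound. So the paper's displayed inequality before \eqref{eq: k-l-small} is unjustified as written. Your decomposition of the Gram entry via \eqref{eq: inner-coeff-km} is exactly what circumvents this: in the convolution piece, $\widehat{g}(k_2-k_1)$ recombines with $\widehat{\mathbf{1}_{E_T}}(k_2^3-k_1^3)$ into the genuinely two-dimensional Fourier coefficient $\widehat{H}(\lambda_{k_2}-\lambda_{k_1})$ of $H=\mathbf{1}_{E_T}(t)g(x)\in L^2(\T^2)$, where the legitimate $2$D multiplicity bound of Lemma \ref{lem-str} and the $\ell^2(\Z^2)$ tail of $\widehat{H}$ apply; and the rank-one piece is killed by positivity plus Plancherel (valid since $k\mapsto k^3$ is injective, so $\int_\T|\sum_k p_ke^{ik^3t}|^2\d t=2\pi\sum_k|p_k|^2$) together with Riemann--Lebesgue decay of $\widehat{g}$. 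The diagonal lower bound via Lemma \ref{lem: coercive-L_k} and the final absorption argument coincide with the paper's. In short, your argument not only proves the proposition but repairs what appears to be a genuine gap in the paper's own proof.

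One caveat on your motivation: the assertion that $\{k^3\}$ is ``not a $\Lambda(4)$ set'' is stronger than what you need and is not obviously true---bounded representation multiplicity is sufficient but not necessary for $\Lambda(4)$, and for cubes the relevant fourth-moment counts are controlled by Hooley-type bounds. The fact your proof actually uses, namely the unboundedness of the scalar difference-of-two-cubes multiplicity, is correct and is all that is required; none of your displayed estimates depends on the $\Lambda(4)$ claim.
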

\begin{proof}
We first simplify the right-hand side of \eqref{eq: high-frequency-est-kdv} 
\begin{align*}
\|\mathcal{L}_H(w)\|^2_{L^2(E_T\times\T)}&=\sum_{(k,l)\in Q^c,(m,l')\in Q^c}\int_{\T}\int_{E_T}e^{i(k^3-m^3)t}e^{i(l-l')x}L(k,l)\overline{L(m,l')}\widehat{\varphi}(k)\overline{\widehat{\varphi}(m)}\d t\d x\\
&=2\pi\sum_{|k|>N_0,|m|>N_0,l\in\Z}\int_{E_T}e^{i(k^3-m^3)t}L(k,l)\overline{L(m,l)}\widehat{\varphi}(k)\overline{\widehat{\varphi}(m)}\d t\\
&=I_0+I_H
\end{align*}
where $I_0:=2\pi|E_T|\sum_{|k|>N_0,l\in\Z}|L(k,l)|^2|\widehat{\varphi}(k)|^2$ and 
\[
I_{H}:=2\pi\sum_{k\neq m,|k|>N_0,|m|>N_0,l\in\Z}\int_{E_T}e^{i(k^3-m^3)t}L(k,l)\overline{L(m,l)}\widehat{\varphi}(k)\overline{\widehat{\varphi}(m)}\d t.
\]
Recall that there are uniform estimates (see \eqref{eq: g-k-leq1} and Lemma \ref{lem: coercive-L_k}):
\begin{gather*}
  \delta \leq \|\mathcal{L}(e^{ikx})\|_{L^2(\T)}^2\leq \frac{1}{|F|}, \forall k\neq0, \mbox{ or equivalently, }
    \delta  \leq  \sum_l |L(k, l)|^2 \leq \frac{1}{|F|}, \forall k\neq0.
\end{gather*}
Then, we derive that 
\begin{align*}
I_0=2\pi|E_T|\sum_{|k|>N_0} \sum_{l\in\Z}|L(k, l) \hat \varphi(k)|^2 
 &\geq 2\pi|E_T|\delta \sum_{|k|>N_0} |\widehat{\varphi}(k)|^2 .
\end{align*}

Now let us bound $|I_H|$ by $\sum_{|k|>N_0} |\widehat{\varphi}(k)|^2$. Without loss of generality, we assume that $E_T\subset\T$. We rewrite $I_H$ in the following form:
\begin{align*}
I_{H}=2\pi\sum_{k\neq m,|k|>N_0,|m|>N_0,l\in\Z}\widehat{\mathbf{1}_{E_T}}(m^3-k^3)\left( L(k,l)\overline{L(m,l)}\right)\widehat{\varphi}(k)\overline{\widehat{\varphi}(m)}.
\end{align*}
Using the Plancherel theorem, $\sum_{\alpha\in\Z}|\widehat{\mathbf{1}_{E_T}}(\alpha)|^2=|E_T|<\infty$. Thus, there exists $N_0\in\N^*$ sufficiently large such that 
\begin{equation*}
\left(\sum_{|\alpha|>N_0,\alpha\in\Z}|\widehat{\mathbf{1}_{E_T}}(\alpha)|^2\right)^{\frac{1}{2}}\leq \frac{|E_T||F|\delta}{20}.
\end{equation*}
Since $\mathcal{L}$ is self-adjoint, we know that $L(k,l)=\overline{L(l,k)}$, for any $(k,l)\in\Z^2$. Hence,
\[
\sum_{k}|L(k,l)|^2=\sum_{k}|\overline{L(k,l)}|^2=\sum_{k}|L(l,k)|^2.
\]
For $|k|>N_0$ and $|m|>N_0$, by \eqref{eq: inner-coeff-km} and \eqref{eq: g-k-leq1}, 
\begin{equation*}
|2\pi \sum_{l\in\Z}L(k,l)\overline{L(m,l)}|\leq \left|\poscals{\mathcal{L}(e^{ikx})}{\mathcal{L}(e^{imx})}_{L^2(\T)}\right|\leq \frac{1}{|F|}.
\end{equation*}
Similarly as in Lemma \ref{lem-high-freq}, by the Cauchy--Schwarz inequality, we obtain
\begin{align*}
&\left|\sum_{k\neq m,|k|>N_0,|m|>N_0}\widehat{\mathbf{1}_{E_T}}(m^3-k^3)\widehat{\varphi}(k)\overline{\widehat{\varphi}(m)}\poscals{\mathcal{L}(e^{ikx})}{\mathcal{L}(e^{imx})}_{L^2(\T)}\right|\\
\leq&
\frac{1}{|F|}\left(\sum_{k\neq m,|k|,|m|>N_0}|\widehat{\varphi}(k)\overline{\widehat{\varphi}(m)}|^2\right)^{\frac{1}{2}}\left(\Theta\sum_{|\alpha|>N_0,\alpha\in\Z}|\widehat{\mathbf{1}_{E_T}}(\alpha)|^2\right)^{\frac{1}{2}},
\end{align*}
where $\Theta$ is similar to \eqref{equ-G-5}, defined by $\Theta=\sup_{\alpha\in \Z^2\backslash \{0\}}\# \Big\{(k,l)\in \Z^2: l^3-k^3=\alpha\Big\}$. Since
\[
|l^3-k^3|=|l-k||l^2+kl+k^2|\geq \frac{1}{2}|l-k|(k^2+l^2),
\]
combining with $|k|>N_0,|m|>N_0,k\neq m$, we know that 
\begin{equation*}
|m^3-k^3|>\frac{1}{2}|m-k|(k^2+m^2)>N_0^2\geq N_0.
\end{equation*}
According to the proof of Lemma \ref{lem-str}, $\Theta\leq 2$.
Then, we deduce that
\begin{equation}\label{eq: k-l-small}
\left|2\pi\sum_{k\neq l,|k|>N_0,|m|>N_0}\sum_{l\in\Z}L(k,l)\overline{L(m,l)}\widehat{\varphi}(k)\overline{\widehat{\varphi}(m)}\widehat{\mathbf{1}_{E_T}}(m^3-k^3)\right|\leq \frac{|E_T|\delta}{10}\sum_{|k|>N_0}|\widehat{\varphi}(k)|^2.
\end{equation}
Therefore, we derive that
\begin{equation*}
\|\mathcal{L}_H(w)\|^2_{L^2(E_T\times\T)}\geq 2\pi|E_T|\delta \sum_{|k|>N_0} |\widehat{\varphi}(k)|^2-\frac{|E_T|\delta}{10}\sum_{|k|>N_0}|\widehat{\varphi}(k)|^2>\pi|E_T|\delta \sum_{|k|>N_0} |\widehat{\varphi}(k)|^2, 
\end{equation*}
which implies that $C_H(E_T,\delta)=\frac{1}{\pi|E_T|\delta}>0$
\[
\sum_{|k|>N_0} |\widehat{\varphi}(k)|^2
 \leq C_H(E_T,\delta)\int_{\T}\int_{E_T}|\sum_{(k,l)\in Q^c}e^{i(k^3,  l)(t, x)} L(k, l)\widehat{\varphi}(k)|^2\d t\d x.
\]
This completes the proof.
\end{proof}

\subsubsection{Uniform high-frequency estimates under translations}
Recall $E_T-h:=\{t-h: t\in E_T\}$.
\begin{lemma}
Let $N_0$ and $Q$ be the same as in Proposition \ref{prop: hf-est-kdv}. Then there exists a constant $\varepsilon_0$ depending on $E_T,F$ such that
\begin{equation}\label{eq: stability-hf-est-kdv}
\sum_{|k|>N_0} |\widehat{\varphi}(k)|^2
 \leq 2C_H \int_{\T}\int_{E_T\cap(E_T-h)}|\sum_{(k,l)\in Q^c}e^{i(k^3,  l)(t, x)} L(k, l)\widehat{\varphi}(k)|^2\d t\d x,
\end{equation}
for all $|h|<\varepsilon_0$.
\end{lemma}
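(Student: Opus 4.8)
The plan is to follow the perturbative scheme of Lemma \ref{lem: perturbative}, adapted to the facts that the translation now acts only in time and that the observed quantity is $\mathcal{L}_H(w)$ (defined in \eqref{eq: hf-part-Lw}) rather than a free dispersive evolution. Write $\mathcal{F}_h:=E_T\setminus(E_T-h)$, so that $E_T=(E_T\cap(E_T-h))\cup\mathcal{F}_h$. Subtracting the contribution of $\mathcal{F}_h$ from the high-frequency estimate \eqref{eq: high-frequency-est-kdv}, the bound \eqref{eq: stability-hf-est-kdv} with constant $2C_H$ follows at once once we establish the smallness estimate
\[
C_H\int_\T\int_{\mathcal{F}_h}\Big|\mathcal{L}_H(w)\Big|^2\,\d t\,\d x\leq\tfrac12\sum_{|k|>N_0}|\widehat{\varphi}(k)|^2,\qquad |h|<\varepsilon_0.
\]
Since translation is continuous on $L^1$, $|\mathcal{F}_h|=|E_T\setminus(E_T-h)|\to0$ as $h\to0$; hence it suffices to dominate the left-hand integral by $C(E_T,F)\,|\mathcal{F}_h|^{\theta}\sum_{|k|>N_0}|\widehat{\varphi}(k)|^2$ for some $\theta>0$, and then to choose $\varepsilon_0$ accordingly.

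For the decomposition, by \eqref{eq: coeff-op-L} one has $\mathcal{L}(e^{ikx})=g(x)\big(e^{ikx}-2\pi\widehat{g}(-k)\big)$ with $g=\frac{1}{|F|}\mathbf{1}_F$, whence
\[
\mathcal{L}_H(w)(t,x)=g(x)\big(A(t,x)-B(t)\big),\quad A:=\sum_{|k|>N_0}\widehat{\varphi}(k)e^{i(k^3t+kx)},\quad B:=2\pi\sum_{|k|>N_0}\widehat{\varphi}(k)\widehat{g}(-k)e^{ik^3t}.
\]
Because $g^2\leq|F|^{-2}$, because $B$ is independent of $x$, and because $A$ carries no zero spatial mode (so $\int_\T A\,\d x=0$ and the cross term drops), for each fixed $t$ we get
\[
\int_\T|\mathcal{L}_H(w)|^2\,\d x\leq\frac{1}{|F|^2}\int_\T|A-B|^2\,\d x=\frac{1}{|F|^2}\Big(\int_\T|A|^2\,\d x+2\pi|B(t)|^2\Big).
\]

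To estimate the two pieces, note the frequencies $\{(k,k^3):|k|>N_0\}$ of $A$ form a $\Lambda(4)$ set, so Lemma \ref{lem-str} (with $d=3$) gives $\|A\|_{L^4(\T^2)}\lesssim(\sum_{|k|>N_0}|\widehat{\varphi}(k)|^2)^{1/2}$, and Hölder yields $\int_{\mathcal{F}_h}\int_\T|A|^2\lesssim|\mathcal{F}_h|^{1/2}\sum_{|k|>N_0}|\widehat{\varphi}(k)|^2$. For $B$, a Cauchy--Schwarz bound together with $\widehat{g}\in\ell^2$ (indeed $\sum_k|\widehat{g}(k)|^2=\frac{1}{2\pi|F|}$) gives the crude estimate $\|B\|_{L^\infty_t}\lesssim|F|^{-1/2}(\sum_{|k|>N_0}|\widehat{\varphi}(k)|^2)^{1/2}$, so that $\int_{\mathcal{F}_h}|B|^2\lesssim|\mathcal{F}_h|\,|F|^{-1}\sum_{|k|>N_0}|\widehat{\varphi}(k)|^2$. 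Adding the two contributions bounds the left-hand integral by $C(F,d)\,(|\mathcal{F}_h|^{1/2}+|\mathcal{F}_h|)\sum_{|k|>N_0}|\widehat{\varphi}(k)|^2$, which is $\leq\frac{1}{2C_H}\sum_{|k|>N_0}|\widehat{\varphi}(k)|^2$ once $|h|$ is small enough.

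The main obstacle is precisely the term $B(t)$, the $x$-average produced by the mass-conservation operator $\mathcal{L}$: unlike the pure dispersive case of Lemma \ref{lem: perturbative}, $\mathcal{L}_H(w)$ is not supported on the $\Lambda(4)$ curve $\{(k,k^3)\}$ but spreads over all spatial frequencies, and $B$ lives on the cube set $\{k^3\}$, which need not be $\Lambda(4)$, so the $L^4(\T^2)$ Strichartz estimate of Lemma \ref{lem-str} is unavailable for it. The remedy is that $B$ appears weighted by the Fourier coefficients $\widehat{g}(-k)$ of the square-integrable control profile $g$, so that the missing $L^4$ control can be traded for the $\ell^2$-summability of $\widehat{g}$ and a plain $L^\infty_t$ estimate; this is the only place where the argument departs from the general dispersive scheme of Section \ref{sec: Observability from space-time measurable sets}.
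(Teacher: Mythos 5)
Your proof is correct, but it takes a genuinely different route from the paper's. The paper stays entirely on the Fourier side: it expands the integral over $(E_T\setminus(E_T-h))\times\T$ into the double sum $\sum_{k,m}\widehat{\mathbf{1}_{E_T\setminus(E_T-h)}}(m^3-k^3)\bigl(\sum_l L(k,l)\overline{L(m,l)}\bigr)\widehat{\varphi}(k)\overline{\widehat{\varphi}(m)}$, applies Cauchy--Schwarz twice (over the pairs $(k,m)$ and over $l$), invokes the uniform coefficient bound $\sum_l|L(k,l)|^2\leq\frac{1}{2\pi|F|}$ from \eqref{eq: g-k-leq1}, and then uses Plancherel for $\mathbf{1}_{E_T\setminus(E_T-h)}$ to produce the smallness factor $|E_T\setminus(E_T-h)|^{1/2}$; Lemma \ref{lem-str} is never invoked there. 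You instead factor $\mathcal{L}_H(w)=g\,(A-B)$ in physical space via \eqref{eq: coeff-op-L}, kill the cross term by the orthogonality $\int_\T A\,\d x=0$, and then treat the two pieces by different tools: the $L^4(\T^2)$ Strichartz estimate of Lemma \ref{lem-str} plus H\"older for the free evolution $A$ (exactly the template of Lemma \ref{lem: perturbative}), and an elementary Cauchy--Schwarz/$\ell^2(\widehat g)$ bound in $L^\infty_t$ for the average term $B$. What your route buys is conceptual transparency: it reuses the already-proven general dispersive scheme verbatim for $A$ and isolates cleanly the genuinely new object created by the mass constraint (the $x$-average $B$), showing it can be handled by a trivial bound; it also avoids the double Cauchy--Schwarz over $(k,m)$ pairs and the implicit counting bound $\Theta\leq 2$. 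What the paper's route buys is uniformity of machinery: it recycles the exact computation and coefficient estimates of Proposition \ref{prop: hf-est-kdv}, so no structural identity for $\mathcal{L}$ beyond \eqref{eq: g-k-leq1} is needed, and the argument generalizes to operators whose matrix coefficients satisfy only such row-sum bounds rather than an explicit factorization. Both arguments give $\varepsilon_0$ depending only on $E_T$ and $F$ (through $C_H$, $\delta(F)$, and the $L^1$-continuity of translation applied to $\mathbf{1}_{E_T}$), so your statement of the dependence is consistent with the paper's.
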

\begin{proof}
Due to the fact that $E_T=\left(E_T\cap(E_T-h)\right)\bigcup\left(E_T\setminus(E_T-h)\right)$, it suffices to demonstrate that 
\begin{equation}\label{eq: stab-est-2}
2C_H \int_{\T}\int_{E_T\setminus(E_T-h)}|\sum_{(k,l)\in Q^c}e^{i(k^3,  l)(t, x)} L(k, l)\widehat{\varphi}(k)|^2\d t\d x\leq \sum_{|k|>N_0} |\widehat{\varphi}(k)|^2,
\end{equation}
holds for all $|h|<\varepsilon_0$. Indeed, applying Proposition \ref{prop: hf-est-kdv}, we have
\begin{align*}
2\sum_{|k|>N_0} |\widehat{\varphi}(k)|^2
 &\leq2C_H \int_{\T}\int_{E_T}|\sum_{(k,l)\in Q^c}e^{i(k^3,  l)(t, x)} L(k, l)\widehat{\varphi}(k)|^2\d t\d x\\
 &\leq \sum_{|k|>N_0} |\widehat{\varphi}(k)|^2+2C_H \int_{\T}\int_{E_T\cap(E_T-h)}|\sum_{(k,l)\in Q^c}e^{i(k^3,  l)(t, x)} L(k, l)\widehat{\varphi}(k)|^2\d t\d x.
\end{align*}
Then we obtain the desired inequality \eqref{eq: stability-hf-est-kdv}. Now we turn to prove \eqref{eq: stab-est-2}. We denote
\begin{align*}
J_1=&\left|\int_{\T}\int_{E_T\setminus(E_T-h)}|\sum_{(k,l)\in Q^c}e^{i(k^3,  l)(t, x)} L(k, l)\widehat{\varphi}(k)|^2\d t\d x\right|\\
=&2\pi\left|\sum_{(k,l)\in Q^c,(m,l)\in Q^c}\int_{E_T\setminus(E_T-h)}e^{i(k^3-m^3)t}L(k,l)\overline{L(m,l)}\widehat{\varphi}(k)\overline{\widehat{\varphi}(m)}\d t\right|.
\end{align*}
Similarly as in Lemma \ref{lem-high-freq}, by the Cauchy--Schwarz inequality, we obtain 
\begin{align*}
|J_1|&\leq4\pi\left(\sum_{|k|,|m|>N_0}|\sum_{l\in\Z}L(k,l)\overline{L(m,l)}\widehat{\varphi}(k)\overline{\widehat{\varphi}(m)}|^2\right)^{\frac{1}{2}}\left(\sum_{|\alpha|>N_0,\alpha\in\Z}|\widehat{\mathbf{1}_{E_T\setminus(E_T-h)}}(\alpha)|^2\right)^{\frac{1}{2}}\\
&\leq 4\pi\left(\sum_{|k|,|m|>N_0}|\widehat{\varphi}(k)\overline{\widehat{\varphi}(m)}|^2\sum_{l\in\Z}|L(k,l)|^2\sum_{l\in\Z}|L(m,l)|^2\right)^{\frac{1}{2}}\left(\sum_{|\alpha|>N_0,\alpha\in\Z}|\widehat{\mathbf{1}_{E_T\setminus(E_T-h)}}(\alpha)|^2\right)^{\frac{1}{2}}.
\end{align*}
Thanks to Plancherel's theorem, we have 
\begin{equation*}
\left(\sum_{|\alpha|>N_0,\alpha\in\Z}|\widehat{\mathbf{1}_{E_T\setminus(E_T-h)}}(\alpha)|^2\right)^{\frac{1}{2}}\leq \left(\sum_{\alpha\in\Z}|\widehat{\mathbf{1}_{E_T\setminus(E_T-h)}}(\alpha)|^2\right)^{\frac{1}{2}}=\|\mathbf{1}_{E_T\setminus(E_T-h)}\|_{L^2}=|E_T\setminus(E_T-h)|^{\frac{1}{2}}.
\end{equation*}
Consequently, we obtain
\begin{equation*}
|J_1|\leq 4\pi|E_T\setminus(E_T-h)|^{\frac{1}{2}}\sum_{|k|>N_0,l\in\Z}|L(k,l)\widehat{\varphi}(k)|^2\leq \frac{2}{|F|}|E_T\setminus(E_T-h)|^{\frac{1}{2}}\sum_{|k|>N_0} |\widehat{\varphi}(k)|^2.
\end{equation*}
Armed with the preceding estimate for $J_1$, we derive that
\begin{equation*}
2C_H\int_{\T}\int_{E_T\setminus(E_T-h)}|\sum_{(k,l)\in Q^c}e^{i(k^3,  l)(t, x)} L(k, l)\widehat{\varphi}(k)|^2\d t\d x\leq \frac{4C_H}{|F|}|E_T\setminus(E_T-h)|^{\frac{1}{2}}\sum_{|k|>N_0} |\widehat{\varphi}(k)|^2.
\end{equation*}
Since the Lebesgue measure $|E_T\setminus(E_T-h)|\to0$ as $|h|\to0$, there exists a constant $\varepsilon_0=\varepsilon_0(E_T,F,C_H)>0$ such that $\frac{4C_H}{|F|}|E_T\setminus(E_T-h)|^{\frac{1}{2}}<1$. As a consequence, we prove the estimate \eqref{eq: stab-est-2}.
\end{proof}

\subsubsection{Analysis of low-frequency part}\label{sec: low-fre-est-kdv}
In the sequel, we focus on the low-frequency part of \eqref{eq: twist-ob}. It suffices to prove the following lemma on the augmented observability.
\begin{lemma}[Augmented observability]\label{lem-add-freq-kdv}
Let $Q$ be the same as in Proposition \ref{prop: hf-est-kdv}. Assume that the following inequality 
\begin{align*}
\sum_{|k|>N_0} |\widehat{\varphi}(k)|^2
 \leq 2C_H \int_{\T}\int_{E_T\cap(E_T-h)}|\sum_{(k,l)\in Q^c}e^{i(k^3,  l)(t, x)} L(k, l)\widehat{\varphi}(k)|^2\d t\d x,
\end{align*}
holds for all $\varphi\in L^2_0(\T)$ and all $|h|\leq \varepsilon_0$ with some $\varepsilon_0>0$.
Let $\lambda\in \{(k_0,l):0<|k_0|\leq N_0,l\in\Z\}\subset \Z^2 \backslash Q$. Then
there exists a constant $C'_H>0$ such that for all $\varphi\in L^2_0(\T)$, we have
\begin{align}\label{equ-lem-add-freq-kdv}
\sum_{|k|>N_0,\mbox{ or }k=k_0 } |\widehat{\varphi}(k)|^2
 \leq C'_H \int_{\T}\int_{E_T}|\sum_{(k,l)\in Q^c\cup\{\lambda\}}e^{i(k^3,  l)(t, x)} L(k, l)\widehat{\varphi}(k)|^2\d t\d x.
\end{align}
\end{lemma}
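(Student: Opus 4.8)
The plan is to argue by contradiction, following the scheme of Lemma \ref{lem-add-freq} but correcting for the fact that the added low-frequency mode is no longer constant in time. Throughout, write $\phi_k(t,x):=e^{ik^3t}\mathcal L(e^{ikx})$, so that the right-hand side of \eqref{equ-lem-add-freq-kdv} is $\iint_{E_T\times\T}|\sum_{|k|>N_0}\widehat{\varphi}(k)\phi_k+\widehat{\varphi}(k_0)\phi_{k_0}|^2$; recall that each $\phi_k(t,\cdot)$ is supported in $F$, so by Lemma \ref{lem: 0-avg-op} integrating in space over $\T$ or over $F$ is the same. Since both sides of \eqref{equ-lem-add-freq-kdv} only see the coefficients $\widehat{\varphi}(k)$ with $|k|>N_0$ or $k=k_0$, I may work with $\varphi$ supported on $\{|k|>N_0\}\cup\{k_0\}$. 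Assuming \eqref{equ-lem-add-freq-kdv} fails, I would produce $\varphi_n\in L^2_0(\T)$ so supported, normalised by $\sum_{|k|>N_0}|\widehat{\varphi_n}(k)|^2+|\widehat{\varphi_n}(k_0)|^2=1$ while $\iint_{E_T\times\T}|\Psi_n|^2\to0$, where $\Psi_n$ is the observed quantity. Writing $a^{(n)}_k=\widehat{\varphi_n}(k)$ for $|k|>N_0$ and $c_n=\widehat{\varphi_n}(k_0)$, boundedness lets me extract $a^{(n)}_k\rightharpoonup a_k$ weakly in $\ell^2$ and $c_n\to c$.

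First I would upgrade this to strong convergence of the high-frequency coefficients. Applying the hypothesis at $h=0$ (where $E_T\cap(E_T-0)=E_T$) to the high-frequency part of $\varphi_m-\varphi_n$ gives $\sum_{|k|>N_0}|a^{(m)}_k-a^{(n)}_k|^2\le 2C_H\iint_{E_T\times\T}|\sum_{|k|>N_0}(a^{(m)}_k-a^{(n)}_k)\phi_k|^2$, and the integrand there equals $(\Psi_m-\Psi_n)-(c_m-c_n)\phi_{k_0}$. Since $\iint|\Psi_m-\Psi_n|^2\to0$, since $(c_n)$ is Cauchy, and since $\iint_{E_T\times\T}|\phi_{k_0}|^2=|E_T|\,\|\mathcal L(e^{ik_0x})\|_{L^2}^2<\infty$, the right-hand side tends to $0$; hence $(a^{(n)}_k)$ is Cauchy in $\ell^2$. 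Using the two-sided high-frequency bound from the proof of Proposition \ref{prop: hf-est-kdv} (the $I_0+I_H$ computation yields $\|\sum_{|k|>N_0}a_k\phi_k\|_{L^2(E_T\times\T)}^2\lesssim\sum|a_k|^2$), I conclude $\Psi_n\to\Psi:=\sum_{|k|>N_0}a_k\phi_k+c\,\phi_{k_0}$ strongly in $L^2(E_T\times\T)$, so that $\Psi=0$ a.e. on $E_T\times\T$.

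The decisive step, and the main obstacle, is the translation argument. Compared with Lemma \ref{lem-add-freq} the difficulty is that $\phi_{k_0}$ carries the time oscillation $e^{ik_0^3t}$, so the plain difference $\Psi(\cdot+h,\cdot)-\Psi$ no longer annihilates the low-frequency mode. Instead I would use the phase-corrected difference $g_h(t,x):=\Psi(t+h,x)-e^{ik_0^3h}\Psi(t,x)$; the $k_0$-term cancels exactly, leaving the purely high-frequency function $g_h=\sum_{|k|>N_0}a_k\big(e^{ik^3h}-e^{ik_0^3h}\big)\phi_k$, which equals $\mathcal L_H(S(t)\psi)$ for $\psi\in L^2_0(\T)$ with $\widehat{\psi}(k)=a_k(e^{ik^3h}-e^{ik_0^3h})$. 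Because $\Psi=0$ on $E_T\times\T$, one has $g_h=0$ on $\big(E_T\cap(E_T-h)\big)\times\T$, so for $|h|\le\varepsilon_0$ the assumed observability on translated sets forces $\sum_{|k|>N_0}|a_k(e^{ik^3h}-e^{ik_0^3h})|^2=0$. Since $|k|>N_0\ge|k_0|$ and cubing is injective on $\Z$, we have $k^3\ne k_0^3$, so $h\mapsto e^{i(k^3-k_0^3)h}-1$ cannot vanish for all $|h|\le\varepsilon_0$; hence $a_k=0$ for every $|k|>N_0$. Then $\Psi=c\,\phi_{k_0}$ vanishes on $E_T\times\T$, and taking the $L^2_x$-norm at any $t\in E_T$ (so $|e^{ik_0^3t}|=1$) together with the coercivity $\|\mathcal L(e^{ik_0x})\|_{L^2}^2\ge\delta>0$ of Lemma \ref{lem: coercive-L_k} (valid since $k_0\ne0$) yields $c=0$. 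This contradicts $\sum|a_k|^2+|c|^2=1$, proving \eqref{equ-lem-add-freq-kdv}.
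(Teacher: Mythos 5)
Your proposal is correct and follows essentially the same route as the paper's proof: contradiction with the same normalization, upgrading weak to strong convergence via the $h=0$ case of the hypothesis, identifying the vanishing limit $\Psi$ on $E_T\times\T$, a translation argument that annihilates the high frequencies, and the coercivity of Lemma \ref{lem: coercive-L_k} to force $c=0$. Your phase-corrected difference $\Psi(t+h,x)-e^{ik_0^3h}\Psi(t,x)$ realizes exactly the same cancellation the paper obtains by first multiplying each spatial Fourier mode by $e^{-ik_0^3t}$ (so the $k_0$-term becomes time-independent) and then taking the plain difference $f_l(\cdot+h)-f_l$, so the two arguments differ only cosmetically.
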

\begin{proof}
The proof is quite similar to Lemma \ref{lem-add-freq}. We argue by contradiction. Suppose that \eqref{equ-lem-add-freq-kdv} fails, then there exist a sequence $\{\varphi_n\}_{n\in\N^*}\subset L^2_0(\T)$ with $\mathrm{supp\,} \widehat{\varphi_n} \subset\{k\in\Z:|k|>N_0\}$ such that
\begin{gather}
|\widehat{\varphi_n}(k_0)|^2+\sum_{|k|>N_0} |\widehat{\varphi_n}(k)|^2=1, \label{eq: norm-1-kdv}\\
\int_{\T}\int_{E_T}|\sum_{(k,l)\in Q^c\cup\{\lambda\}}e^{i(k^3,  l)(t, x)} L(k, l)\widehat{\varphi_n}(k)|^2\d t\d x \to 0 \text{ as } n\to \infty.  \label{eq: 0-limit}
\end{gather}
It follows that there exist subsequences, still denoted by $\varphi_n$, such that $\varphi_n\rightharpoonup \varphi$ weakly in $L^2_0(\T)$ and $\widehat{\varphi_n}(k_0)\to c$ for some $\varphi\in L^2_0(\T)$ with $\mathrm{ supp\, } \widehat{\varphi}\subset\{k\in\Z:|k|>N_0\}$ and $c\in \C$. Thus $\varphi_n+\widehat{\varphi_n}(k_0)\rightharpoonup \varphi+c$ weakly in $L^2_0(\T)$. 

 In fact, we claim that $\{\varphi_n\}_{n\in\N}$ is a Cauchy sequence. Using the Plancherel theorem, together with $\mathrm{supp\,} \widehat{\varphi_n} \subset\{k\in\Z:|k|>N_0\}$ , we have
\[
\|\varphi_m-\varphi_n\|^2_{L^2_0(\T)}=\sum_{|k|>N_0}|\widehat{\varphi_m}(k)-\widehat{\varphi_n}(k)|^2.
\]
Applying the high-frequency estimate \eqref{eq: high-frequency-est-kdv}, we derive that
\begin{align*}
\sum_{|k|>N_0}|\widehat{\varphi_m}(k)-\widehat{\varphi_n}(k)|^2&\leq C_H \int_{\T}\int_{E_T}|\sum_{(k,l)\in Q^c}e^{i(k^3,  l)(t, x)} L(k, l)\left(\widehat{\varphi_m}(k)-\widehat{\varphi_n}(k)\right)|^2\d t\d x\\
&\leq 3C_H\int_{\T}\int_{E_T}|\sum_{(k,l)\in Q^c\cup\{\lambda\}}e^{i(k^3,  l)(t, x)} L(k, l)\widehat{\varphi_n}(k)|^2\d t\d x\\
&+3C_H\int_{\T}\int_{E_T}|\sum_{(k,l)\in Q^c\cup\{\lambda\}}e^{i(k^3,  l)(t, x)} L(k, l)\widehat{\varphi_m}(k)|^2\d t\d x\\
&+3C_H\int_{\T}\int_{E_T}|\sum_{l\in\Z}e^{i(k_0^3,  l)(t, x)} L(k_0, l)\left(\widehat{\varphi_m}(k_0)-\widehat{\varphi_n}(k_0)\right)|^2\d t\d x.
\end{align*}
Since $\{\widehat{\varphi_n}(k_0)\}_{n\in\N}$ is a Cauchy sequence, combining with \eqref{eq: 0-limit}, we know that
$\{\varphi_n\}_{n\in\N}$ is a Cauchy sequence in $L^2_0(\T)$. This implies that $\varphi_n\to\varphi$ strongly in $L^2_0(\T)$, and $\varphi_n+\widehat{\varphi_n}(k_0)\to \varphi+c$ strongly in $L^2_0(\T)$. Hence,
\begin{equation}\label{eq: norm-1-limit}
\|\varphi+ce^{ik_0x}\|^2_{L^2_0(\T)}=1,\mbox{ or equivalently, }|c|^2+\sum_{|k|>N_0}|\widehat{\varphi}(k)|^2=1.
\end{equation}
However, by \eqref{eq: 0-limit}, we know that 
\[
\mathbf{1}_{E_T}(t)\sum_{(k,l)\in Q^c\cup\{\lambda\}}e^{i(k^3,  l)(t, x)} L(k, l)\widehat{\varphi_n}(k)\to 0, \mbox{ strongly in } L^2(\T^2).
\]
By the uniqueness of the limit, we obtain
\begin{equation*}
\sum_{|k|>N_0,l\in\Z}e^{i(k^3,  l)(t, x)} L(k, l)\widehat{\varphi}(k)+\sum_{l\in\Z}ce^{i(k_0^3,  l)(t, x)} L(k_0, l)=0,\text{ on  } E_T\times\T.
\end{equation*}
Hence, we obtain for all $l\in\Z$
\begin{equation}\label{eq: 0-E_T}
\sum_{|k|>N_0}e^{i(k^3-k_0^3)t} L(k, l)\widehat{\varphi}(k)+cL(k_0, l)=0\quad  \text{ on  } E_T.
\end{equation}
Let $f_l(t)=\sum_{|k|>N_0}e^{i(k^3-k_0^3)t} L(k, l)\widehat{\varphi}(k)$ and $g_l=f_l(\cdot+h)-f_l$. Then
\begin{equation*}
g_l(t)=\sum_{|k|>N_0}e^{i(k^3-k_0^3)t} L(k, l)(e^{i(k^3-k_0^3)h}-1)\widehat{\varphi}(k),\mbox{ and }g_l=0\mbox{ on }E_T\cap(E_T-h).
\end{equation*}
Therefore, due to \eqref{eq: stability-hf-est-kdv}, for all $|h|\leq \varepsilon_0$, we have
\begin{align*}
&\sum_{|k|>N_0} |(e^{i(k^3-k_0^3)h}-1)\widehat{\varphi}(k)|^2\\
& \leq 2C_H \int_{\T}\int_{E_T\cap(E_T-h)}|\sum_{(k,l)\in Q^c}e^{i(k^3,  l)(t, x)} L(k, l)(e^{i(k^3-k_0^3)h}-1)\widehat{\varphi}(k)|^2\d t\d x\\
& =4\pi C_H \sum_{l\in\Z}\int_{E_T\cap(E_T-h)}|g_l(t)|^2\d t\equiv0.
\end{align*}
Thus, we have
$$
0=(e^{i(k^3-k_0^3)h}-1)\widehat{\varphi}(k), \quad \forall |k|>N_0, |h|\leq \varepsilon_0.
$$
Noting $|k_0|\leq N_0$, this implies that $e^{i(k^3-k_0^3)h}-1\neq0$ for all $|k|>N_0$, which ensures that $\widehat{\varphi}(k)=0,\forall |k|>N_0$. According to \eqref{eq: 0-E_T}, we now have
\[
cL(k_0, l)=0 \text{ on  } E_T,\forall l\in\Z.
\]
Since $\sum_{l\in\Z}|L(k_0, l)|^2>\delta>0$, then there exists $l_0\in\Z$ such that $L(k_0, l_0)\neq0$, which implies that $c\equiv0$. Together with $\widehat{\varphi}(k)=0,\forall |k|>N_0$, we have
\[
\varphi+ce^{ik_0x}\equiv0
\]
 which leads to a contradiction with the fact $\|\varphi+ce^{ik_0x}\|_{L_0^2(\T)}=1$.  
\end{proof}

\subsubsection{Construction of the control operator}\label{sec: construction of the control operator}
\begin{lemma}[Duality relation]\label{lem: duality relation}
Let $w$ be the solution to the adjoint system \eqref{eq: adjoint system}. Then, we have the following duality identity:
\begin{equation}\label{eq: dual identity}
\poscals{\mathcal{L}^2(h)\mathbf{1}_{E_T\times F}}{S(t)w_0}_{L^2([0,T]\times\T)}=\poscals{v(T,\cdot)}{S(T)w_0}_{L^2(\T)}-\poscals{v(0,\cdot)}{w_0}_{L^2(\T)},
\end{equation}
where $v$ satisfies the equation $\partial_tv+\partial_x^3v=\mathcal{L}^2(h)\mathbf{1}_{E_T\times F}$.
\end{lemma}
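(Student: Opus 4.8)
The plan is to prove \eqref{eq: dual identity} by the standard duality pairing between the state equation and its adjoint \eqref{eq: adjoint system}, which here reduces to an integration by parts in $t$ combined with the skew-adjointness of $\partial_x^3$ on $\T$. I would carry this out through Duhamel's formula, so as to avoid differentiating objects that a priori lie only in $C([0,T];L^2(\T))$.

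First I record the two structural facts I will use. Writing $S(t)=e^{-t\partial_x^3}$, the operator $\partial_x^3$ is skew-adjoint on $L^2(\T)$: for smooth periodic $\phi,\psi$, three integrations by parts (all boundary terms vanishing on the torus) give $\langle \partial_x^3\phi,\psi\rangle_{L^2(\T)}=-\langle \phi,\partial_x^3\psi\rangle_{L^2(\T)}$. Consequently $S(t)$ is a unitary group, so $S(t)^*=S(-t)=S(t)^{-1}$ and the group law $S(t)S(s)=S(t+s)$ holds. Second, setting $f:=\mathcal{L}^2(h)\mathbf{1}_{E_T\times F}$, the boundedness of $\mathcal{L}$ on $L^2(\T)$ from Lemma \ref{lem: 0-avg-op}, together with $h\in L^2([0,T]\times\T)$, gives $f\in L^2([0,T]\times\T)\subset L^1([0,T];L^2(\T))$, so the mild (Duhamel) representation of $v$ is valid.

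Next I use Duhamel: $v(T)=S(T)v(0)+\int_0^T S(T-s)f(s)\,\d s$. Pairing against $S(T)w_0$ in $L^2(\T)$, unitarity handles the homogeneous term, $\langle S(T)v(0),S(T)w_0\rangle_{L^2(\T)}=\langle v(0),w_0\rangle_{L^2(\T)}$. For the Duhamel term I move the propagator onto the test function: by $S(T-s)^*=S(s-T)$ and the group law, $\langle S(T-s)f(s),S(T)w_0\rangle_{L^2(\T)}=\langle f(s),S(s-T)S(T)w_0\rangle_{L^2(\T)}=\langle f(s),S(s)w_0\rangle_{L^2(\T)}$. Integrating in $s$ over $[0,T]$ and rearranging yields exactly
\[
\langle f,S(t)w_0\rangle_{L^2([0,T]\times\T)}=\langle v(T),S(T)w_0\rangle_{L^2(\T)}-\langle v(0),w_0\rangle_{L^2(\T)},
\]
which is \eqref{eq: dual identity} since $w(t)=S(t)w_0$ solves \eqref{eq: adjoint system}.

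Conceptually this is nothing but the energy identity $\frac{\d}{\d t}\langle v(t),S(t)w_0\rangle_{L^2(\T)}=\langle f(t),S(t)w_0\rangle_{L^2(\T)}$, the two $\partial_x^3$-contributions cancelling by the skew-adjointness above. The single point one must be careful with is that $v$ and $w$ live merely in $C([0,T];L^2(\T))$, so the pointwise-in-$t$ differentiation is not literally licensed; routing everything through Duhamel's formula and the unitarity of $S(t)$ sidesteps this entirely and needs no regularization. Alternatively one could expand in Fourier series, where $S(t)$ acts diagonally as $\widehat{w}(t,k)=e^{ik^3t}\widehat{w_0}(k)$, verify the identity mode by mode, and sum using $f\in L^2$. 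I expect no real obstacle beyond this bookkeeping.
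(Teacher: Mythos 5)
Your proof is correct: the paper states this lemma without proof (it is the standard duality identity underlying HUM), and your Duhamel-based argument — pairing $v(T)=S(T)v(0)+\int_0^T S(T-s)f(s)\,\d s$ against $S(T)w_0$ and using the unitarity of $S(t)$ (skew-adjointness of $\partial_x^3$) together with the group law $S(s-T)S(T)=S(s)$ — is exactly the expected justification, with the integrability of $f=\mathcal{L}^2(h)\mathbf{1}_{E_T\times F}$ correctly supplied by Lemma \ref{lem: 0-avg-op}. Your choice to route the computation through the mild formulation rather than differentiating $t\mapsto\langle v(t),S(t)w_0\rangle_{L^2(\T)}$ pointwise is the right precaution, since $v$ and $w$ only lie in $C([0,T];L^2(\T))$.
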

For any $\widetilde{h}\in L^2([0,T]\times\T)$, we set $\widetilde{v}$ be the unique solution to 
\begin{equation*}
\partial_t\widetilde{v}+\partial_x^3\widetilde{v}=\mathcal{L}(\widetilde{h})\mathbf{1}_{E_T\times F},\;\;\widetilde{v}|_{t=T}=0.
\end{equation*}
Then, we are able to define the range operator $\mathcal{R}: L^2([0,T]\times\T)\rightarrow L^2_0(\T)$ by $\mathcal{R}(\widetilde{h})=\widetilde{v}|_{t=0}$. Using this operator $\mathcal{R}$, we define the HUM operator $\Phi:L^2_0(\T)\rightarrow L^2_0(\T)$ via
\begin{equation}
\Phi(\varphi):=-\mathcal{R}\circ\mathcal{L}\circ S(t)(\varphi).
\end{equation}
This HUM operator $\Phi$ has the following property:
\begin{proposition}\label{prop: iso-L^2}
$\Phi$ is an isomorphism on $L^2_0(\T)$. Moreover, there exists a constant $C=C(E_T\times F)>0$ such that $\|\Phi\|_{\mathcal{B}(L^2_0(\T))}+\|\Phi^{-1}\|_{\mathcal{B}(L^2_0(\T))}\leq C$.
\end{proposition}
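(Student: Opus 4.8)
The plan is to realize $\Phi$ as the bounded, self-adjoint, coercive operator associated with the Hermitian sesquilinear form
\[
a(\varphi,\psi):=\iint_{E_T\times F}\mathcal{L}(S(t)\varphi)\,\overline{\mathcal{L}(S(t)\psi)}\,\d x\d t,\qquad \varphi,\psi\in L^2_0(\T),
\]
and then to conclude by the Lax--Milgram theorem. The whole argument rests on identifying $\poscals{\Phi(\varphi)}{\psi}_{L^2(\T)}$ with $a(\varphi,\psi)$, after which coercivity comes from the observability of Proposition \ref{prop: twist ob} and boundedness from the energy estimate.

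First I would establish the key identity $\poscals{\Phi(\varphi)}{\psi}_{L^2(\T)}=a(\varphi,\psi)$. To this end, fix $\varphi,\psi\in L^2_0(\T)$ and apply the duality relation of Lemma \ref{lem: duality relation} with $w_0=\psi$ and $h=S(t)\varphi$; the corresponding $v$ solves $\partial_tv+\partial_x^3v=\mathcal{L}^2(S(t)\varphi)\mathbf{1}_{E_T\times F}$, and choosing the solution with $v|_{t=T}=0$ makes it precisely the one defining $\mathcal{R}(\mathcal{L}(S(t)\varphi))=v|_{t=0}=-\Phi(\varphi)$. Since $v(T,\cdot)=0$, the right-hand side of \eqref{eq: dual identity} collapses to $-\poscals{v(0,\cdot)}{\psi}_{L^2(\T)}=\poscals{\Phi(\varphi)}{\psi}_{L^2(\T)}$, while the left-hand side equals
\[
\int_{E_T}\poscals{\mathcal{L}^2(S(t)\varphi)}{S(t)\psi}_{L^2(\T)}\,\d t=\int_{E_T}\poscals{\mathcal{L}(S(t)\varphi)}{\mathcal{L}(S(t)\psi)}_{L^2(\T)}\,\d t=a(\varphi,\psi),
\]
where I use the self-adjointness of $\mathcal{L}$ (Lemma \ref{lem: 0-avg-op}) and the fact that the range of $\mathcal{L}$ is supported in $F$, so that the cut-off $\mathbf{1}_{E_T\times F}$ only restricts the $t$-integration and $\poscals{\mathcal{L}(S(t)\varphi)}{\mathcal{L}(S(t)\psi)}_{L^2(\T)}=\poscals{\mathcal{L}(S(t)\varphi)}{\mathcal{L}(S(t)\psi)}_{L^2(F)}$. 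In particular $a$ is Hermitian, hence $\Phi$ is self-adjoint and $a(\varphi,\varphi)=\iint_{E_T\times F}|\mathcal{L}(S(t)\varphi)|^2\,\d x\d t\ge 0$.

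With this identity in hand, the two quantitative bounds are short. Coercivity is exactly the twisted observability of Proposition \ref{prop: twist ob}:
\[
\poscals{\Phi(\varphi)}{\varphi}_{L^2(\T)}=\iint_{E_T\times F}|\mathcal{L}(S(t)\varphi)|^2\,\d x\d t\ge \tfrac{1}{C}\|\varphi\|^2_{L^2(\T)}.
\]
For boundedness I would use the unitarity of $S(t)$ and the boundedness of $\mathcal{L}$ on $L^2(\T)$: since $\|\mathcal{L}(S(t)\varphi)\|_{L^2(\T)}\le\|\mathcal{L}\|\,\|\varphi\|_{L^2(\T)}$ for each $t$, integrating over $E_T\subset[0,T]$ gives $a(\varphi,\varphi)\le T\|\mathcal{L}\|^2\|\varphi\|^2_{L^2(\T)}$, and Cauchy--Schwarz on $a$ yields $|\poscals{\Phi(\varphi)}{\psi}_{L^2(\T)}|\le T\|\mathcal{L}\|^2\|\varphi\|_{L^2(\T)}\|\psi\|_{L^2(\T)}$, i.e. $\|\Phi\|_{\mathcal{B}(L^2_0(\T))}\le T\|\mathcal{L}\|^2$. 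Then $a$ being bounded, Hermitian and coercive, Lax--Milgram gives that $\Phi$ is an isomorphism of $L^2_0(\T)$; combining coercivity with Cauchy--Schwarz gives $\|\Phi(\varphi)\|_{L^2(\T)}\ge\frac{1}{C}\|\varphi\|_{L^2(\T)}$, hence $\|\Phi^{-1}\|_{\mathcal{B}(L^2_0(\T))}\le C$, and together with the bound on $\|\Phi\|$ this is the stated estimate.

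I expect the main obstacle to be the first step, namely the careful derivation of $\poscals{\Phi(\varphi)}{\psi}_{L^2(\T)}=a(\varphi,\psi)$. One must correctly match the source term $\mathcal{L}^2(S(t)\varphi)\mathbf{1}_{E_T\times F}$ appearing in Lemma \ref{lem: duality relation} with the equation defining $\mathcal{R}$, keep track of the sign and of the final condition $v(T)=0$, and exploit both the self-adjointness of $\mathcal{L}$ and the identity $\|\mathcal{L}(h)\|_{L^2(E_T\times F)}=\|\mathcal{L}(h)\|_{L^2(E_T\times\T)}$ from Lemma \ref{lem: 0-avg-op} to absorb the space-time cut-off into the $t$-integration. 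Once the identity is secured, the coercivity, boundedness and the Lax--Milgram conclusion are routine.
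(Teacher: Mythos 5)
Your proposal is correct and follows essentially the same route as the paper: both identify $\poscals{\Phi(\varphi)}{\varphi}_{L^2(\T)}$ with $\iint_{E_T\times F}|\mathcal{L}(S(t)\varphi)|^2\d x\d t$ via the duality relation of Lemma \ref{lem: duality relation}, invoke Proposition \ref{prop: twist ob} for coercivity, and conclude by Lax--Milgram. Your version is, if anything, slightly more explicit than the paper's (spelling out the boundedness of the form via unitarity of $S(t)$ and the self-adjointness/support argument that absorbs the cut-off $\mathbf{1}_{E_T\times F}$), but these are details the paper's proof uses implicitly, not a different method.
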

\begin{proof}
Let us define a continuous form $\alpha$ on $L^2_0(\T)\times L^2_0(\T)$ by $\alpha(\varphi_1,\varphi_2)=\poscals{\Phi(\varphi_1)}{\varphi_2}_{L^2(\T)}$. Then we check that $\alpha$ is coercive. For $\varphi\in L^2_0(\T)$, using the definition of $\Phi$
\begin{align*}
\alpha(\varphi,\varphi)=\poscals{\Phi(\varphi)}{\varphi}_{L^2(\T)}=-\poscals{\mathcal{R}\circ\mathcal{L}\circ S(t)(\varphi)}{\varphi}_{L^2(\T)}.
\end{align*}
Thanks to the duality identity \eqref{eq: dual identity}, we know that
\begin{equation*}
\poscals{\mathcal{L}(h)\mathbf{1}_{E_T\times F}}{S(t)\varphi}_{L^2([0,T]\times\T)}=-\poscals{v(0,\cdot)-S(-T)v(T,\cdot)}{\varphi}_{L^2(\T)},
\end{equation*}
where $v$ satisfies the equation $\partial_tv+\partial_x^3v=\mathcal{L}^2(h)\mathbf{1}_{E_T\times F}$. Let $\widetilde{v}$ be the unique solution to 
\begin{equation*}
\partial_t\widetilde{v}+\partial_x^3\widetilde{v}=\mathcal{L}^2(h)\mathbf{1}_{E_T\times F},\;\;\widetilde{v}(0,x)=v(0,\cdot)-S(-T)v(T,\cdot).
\end{equation*}
Then, we derive that $\widetilde{v}(T,\cdot)\equiv0$. Indeed, using Duhamel's formula, we can obtain $\widetilde{v}(T,\cdot)=0$ easily. Therefore, by the definition of $\mathcal{R}$, we obtain $\mathcal{R}(\mathcal{L}(h))=v(0,\cdot)-S(-T)v(T,\cdot)$, which implies that the duality identity is in the following form:
\begin{equation}\label{eq: duality-R}
\poscals{\mathcal{L}^2(h)\mathbf{1}_{E_T\times F}}{S(t)\varphi}_{L^2([0,T]\times\T)}=-\poscals{\mathcal{R}(\mathcal{L}(h))}{\varphi}_{L^2(\T)}.
\end{equation}
Due to \eqref{eq: duality-R}, we obtain an equivalent form of $\alpha(\varphi,\varphi)$:
\begin{align*}
\alpha(\varphi,\varphi)=-\poscals{\mathcal{R}\circ\mathcal{L}\circ S(t)(\varphi)}{\varphi}_{L^2(\T)}=\poscals{\mathcal{L}^2(S(t)\varphi)\mathbf{1}_{E_T\times F}}{S(t)\varphi}_{L^2([0,T]\times\T)}.
\end{align*}
Applying Proposition \ref{prop: twist ob}, and the observability inequality 
\begin{equation*}
\|w_0\|^2_{L^2(\T)}\leq C(E_T\times F)\iint_{E_T\times F}|\mathcal{L}(w)|^2\d x\d t,
\end{equation*}
we know that $\alpha$ is coercive, $\alpha(\varphi,\varphi)\geq \frac{1}{C(E_T\times F)}\|\varphi\|^2_{L^2(\T)}$. By the Lax-Milgram theorem, $\Phi$ is an isomorphism on $L^2_0(\T)$ and $\exists C=C(E_T,F)>0$ such that $\|\Phi\|_{\mathcal{B}(L^2_0(\T))}+\|\Phi^{-1}\|_{\mathcal{B}(L^2_0(\T))}\leq C$.
\end{proof}
Based on the HUM operator, we define the control operator $\mathcal{K}: L^2_0(\T)\times L^2_0(\T)\rightarrow L^2([0,T]; L^2_0(\T))$ via
\begin{equation}\label{eq: defi-op-K}
\mathcal{K}(\varphi_1,\varphi_2):=-\mathcal{L}\circ S(t)\circ\Phi^{-1}(\varphi_1-S(-T)\varphi_2).
\end{equation}
Then we have the following exact controllability result:
\begin{proposition}\label{prop: lckdv-result}
Let $T>0$. Then for any $v_0,v_1\in L^2_0(\T)$, there exists a control $h=\mathcal{K}(v_0,v_1)\in L^2([0,T]\times\T)$ such that the linearized KdV equation \eqref{eq: linear ckdv} has a unique solution $v\in C([0,T];L^2(\T))$ satisfying that $v(0,x)=v_0(x),v(T,x)=v_1(x)$. Moreover, there exists a constant $C_{\mathcal{K}}=C_{\mathcal{K}}(g,T,E_T,F)>0$ such that
\begin{equation}\label{eq: est-K-op}
\|\mathcal{K}(v_0,v_1)\|_{L^2([0,T]\times\T)}\leq C_{\mathcal{K}}\left(\|v_0\|_{L^2(\T)}+\|v_1\|_{L^2(\T)}\right).
\end{equation}
\end{proposition}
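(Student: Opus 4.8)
The plan is to assemble the standard Hilbert Uniqueness Method from the ingredients already in place: the isomorphism $\Phi$ of Proposition \ref{prop: iso-L^2}, the duality identity of Lemma \ref{lem: duality relation}, and the mapping properties of $\mathcal{L}$ from Lemma \ref{lem: 0-avg-op}. Given $v_0,v_1\in L^2_0(\T)$, I would first set $\psi:=\Phi^{-1}(v_0-S(-T)v_1)\in L^2_0(\T)$, which is well defined since $\Phi$ is an isomorphism; by definition \eqref{eq: defi-op-K} the control is then $h:=\mathcal{K}(v_0,v_1)=-\mathcal{L}(S(t)\psi)$. Because $\mathcal{L}\colon L^2(\T)\to L^2_0(\T)$ is bounded and $S(t)$ is unitary on $L^2(\T)$, the map $t\mapsto\mathcal{L}(S(t)\psi)$ lies in $L^2([0,T]\times\T)$, so $\mathcal{L}(h)\mathbf{1}_{E_T\times F}\in L^2([0,T]\times\T)\subset L^1([0,T];L^2(\T))$. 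Hence the forward problem $\partial_tv+\partial_x^3v=\mathcal{L}(h)\mathbf{1}_{E_T\times F}$ with $v(0)=v_0$ has a unique solution $v\in C([0,T];L^2(\T))$ via Duhamel's formula; since $\mathcal{L}$ takes values in $L^2_0(\T)$ and $S(t)$ preserves the zero-mean subspace, one has $v(t)\in L^2_0(\T)$ for all $t$.

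The core step is to verify the terminal condition $v(T)=v_1$. I would note that $\mathcal{L}(h)=-\mathcal{L}^2(S(t)\psi)$, so the source is exactly of the form treated in Lemma \ref{lem: duality relation} with $h$ there replaced by $-S(t)\psi$. Testing that identity against an arbitrary $w_0\in L^2_0(\T)$ gives
$$
\poscals{-\mathcal{L}^2(S(t)\psi)\mathbf{1}_{E_T\times F}}{S(t)w_0}_{L^2([0,T]\times\T)}=\poscals{v(T)}{S(T)w_0}_{L^2(\T)}-\poscals{v_0}{w_0}_{L^2(\T)}.
$$
For the left-hand side I use the reduced duality relation \eqref{eq: duality-R} together with $\Phi(\psi)=-\mathcal{R}(\mathcal{L}(S(t)\psi))$, which yields $\poscals{\mathcal{L}^2(S(t)\psi)\mathbf{1}_{E_T\times F}}{S(t)w_0}_{L^2([0,T]\times\T)}=\poscals{\Phi(\psi)}{w_0}_{L^2(\T)}=\poscals{v_0-S(-T)v_1}{w_0}_{L^2(\T)}$. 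Substituting and cancelling the $\poscals{v_0}{w_0}_{L^2(\T)}$ terms leaves $\poscals{S(-T)v_1}{w_0}_{L^2(\T)}=\poscals{v(T)}{S(T)w_0}_{L^2(\T)}$. Since $S$ is unitary, $S(-T)^*=S(T)$, so the left side equals $\poscals{v_1}{S(T)w_0}_{L^2(\T)}$; as $w_0$ ranges over $L^2_0(\T)$ and $S(T)$ is a bijection of $L^2_0(\T)$, the element $S(T)w_0$ exhausts $L^2_0(\T)$, whence $v_1-v(T)$ is orthogonal to all of $L^2_0(\T)$ and therefore $v(T)=v_1$.

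The quantitative bound then follows by chaining norm estimates: using the boundedness of $\mathcal{L}$, the unitarity of $S(t)$ (which gives $\|S(\cdot)\psi\|_{L^2([0,T]\times\T)}=T^{1/2}\|\psi\|_{L^2(\T)}$), and the control $\|\Phi^{-1}\|_{\mathcal{B}(L^2_0(\T))}\le C$ from Proposition \ref{prop: iso-L^2}, one obtains
$$
\|\mathcal{K}(v_0,v_1)\|_{L^2([0,T]\times\T)}=\|\mathcal{L}(S(\cdot)\psi)\|_{L^2([0,T]\times\T)}\le \|\mathcal{L}\|_{\mathcal{B}}\,T^{1/2}\,\|\Phi^{-1}\|_{\mathcal{B}(L^2_0(\T))}\,\big(\|v_0\|_{L^2(\T)}+\|v_1\|_{L^2(\T)}\big),
$$
which is \eqref{eq: est-K-op}. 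Since the analytic heart of the argument, namely the observability \eqref{eq: twist-ob} and the resulting coercivity of $\Phi$, is already established, I do not expect a genuine obstacle at this stage: the only delicate points are bookkeeping, namely keeping the signs and the direction of the semigroup consistent across the definitions of $\mathcal{R}$, $\Phi$ and $\mathcal{K}$, and confirming that $v(T)$ remains in $L^2_0(\T)$ so that the test family $\{S(T)w_0\}_{w_0\in L^2_0(\T)}$ genuinely fills the space against which $v(T)-v_1$ is tested.
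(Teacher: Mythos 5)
Your proof is correct and follows exactly the route the paper intends: Proposition \ref{prop: lckdv-result} is stated there as an immediate consequence of the HUM construction, i.e.\ the definition \eqref{eq: defi-op-K} of $\mathcal{K}$, the isomorphism property of $\Phi$ from Proposition \ref{prop: iso-L^2}, and the duality identities \eqref{eq: dual identity} and \eqref{eq: duality-R}. Your write-up simply makes explicit the bookkeeping the paper leaves implicit (the verification $v(T)=v_1$ by testing against $S(T)w_0$ with $w_0\in L^2_0(\T)$, the preservation of the zero-mean subspace, and the norm chain giving $C_{\mathcal{K}}=\|\mathcal{L}\|\,T^{1/2}\|\Phi^{-1}\|$), and all of these steps check out.
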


\subsection{Nonlinear case}\label{sec: nonlinear case}
We begin by preparing some elements for the proof of Theorem \ref{thm: CKDV-N-intro}. In the pioneering work \cite{Bourgain-93-kdv}, Bourgain introduced the Fourier restriction norms 
\begin{align*}
N_{s,b}(h):=\left(\sum_{k\in\Z}(1+|k|)^{2s}\int_{\R}|\widehat{h}(\tau,k)|^2(1+|\tau-k^3|)^{2b}\d \tau\right)^{\frac{1}{2}},\\
\widetilde{N}_{s,b}(h):=\left(\sum_{k\in\Z}(1+|k|)^{2s}\left(\int_{\R}|\widehat{h}(\tau,k)|(1+|\tau-k^3|)^{b}\d \tau\right)^2\right)^{\frac{1}{2}}.
\end{align*}
where $h\in L^2(\R\times\T)$ and $\widehat{h}$ denotes the Fourier transform of $h$ with respect to both $t$ and $x$ variables and $s,b\in\R$. Based on these two norms, we introduce the classical Bourgain spaces.
\begin{definition}[Bourgain spaces]\label{defi: bourgain space}
Given $s,b\in\R$, for a function $h\in L^2(\R\times\T)$, the Bourgain spaces associated to the KdV equation on $\T$ are defined by
\begin{gather*}
X^{s,b}:=\{h\in L^2(\R;H^s(\T)): N_{s,b}(h)<\infty\},\qquad
Y^{s,b}:=\{h\in L^2(\R;H^s(\T)): \widetilde{N}_{s,b}(h)<\infty\},\\
Z^{s,b}:=\{h\in X^{s,b}\cap Y^{s,b-\frac{1}{2}}: N_{s,b}(h)+\widetilde{N}_{s,b-\frac{1}{2}}(h)<\infty\}.  
\end{gather*}
It is clear that $X^{s,b},Y^{s,b},Z^{s,b}$ are all Hilbert spaces, endowed with the norms $\|\cdot\|_{X^{s,b}}=N_{s,b}(\cdot)$, $\|\cdot\|_{Y^{s,b}}=\widetilde{N}_{s,b}(\cdot)$, and $\|\cdot\|_{Z^{s,b}}=N_{s,b}(\cdot)+\widetilde{N}_{s,b}(\cdot)$, respectively. Let $X_T^{s,b}$ be the restriction space\footnote{Similarly for other two types of Bourgain spaces.} of $X^{s,b}$ equipped with the norm $\|h\|_{X_T^{s,b}}:=\inf\{\|\widetilde{h}\|_{X^{s,b}}: \widetilde{h}\in X^{s,b} \mbox{ with }\widetilde{h}=h \mbox{ on }[0,T]\times\T\}$.
\end{definition}
We include the basic properties of Bourgain spaces in Appendix \ref{sec: Basic properties of Bourgain spaces} for completeness. Recall that $S(t)=e^{-t\partial_x^3}$ is the unitary linear semi-group generated by $-\partial_x^3$.

Now we are in a position to prove Theorem \ref{thm: CKDV-N-intro}. 
\begin{proof}[Proof of Theorem \ref{thm: CKDV-N-intro}]
 Then, by Duhamel's formula, we rewrite the mild solution to \eqref{eq: CKdV-tori-intro} by
\begin{equation}\label{eq: duhamel}
u(t)=S(t)u_0+\int_0^tS(t-\tau)(\mathcal{L}(h)\mathbf{1}_{E_T\times F})(\tau)\d \tau-\int_0^tS(t-\tau)(u\partial_x u)(\tau)\d \tau.
\end{equation}
Let $v_1(x;u):=\int_0^TS(T-\tau)(u\partial_x u)(\tau)\d \tau$. By the estimate \eqref{eq: stri-est-b+} in Lemma \ref{lem: bourgain-basic-est} and Lemma \ref{lem: bourgain bilinear-est}, we deduce that
\begin{equation}\label{eq: v-1-L^2}
\|v_1\|_{L^2(\T)}\leq C\|u\partial_xu\|_{Z_T^{0,-\frac{1}{2}}}\leq CT^{\theta}\|u\|^2_{Z_T^{0,\frac{1}{2}}}.    
\end{equation}
According to Proposition \ref{prop: lckdv-result}, we set $v\in C([0,T];L^2(\T))$ to be the unique solution to
\begin{equation}\label{eq: linear part-nkdv}
\left\{
\begin{array}{l}
    \partial_tv+\partial_x^3v=\mathcal{L}(\mathcal{K}(u_0,u_1+v_1))\mathbf{1}_{E_T\times F}, \\
    v(0,x)=u_0(x),
\end{array}
\right.
\end{equation}
satisfying that $v(T,x)=u_1(x)+v_1(x)$, that is, in an equivalent integral equation form:
\begin{equation}\label{eq: v-int-form}
v(t)=S(t)u_0+\int_0^tS(t-\tau)(\mathcal{L}(\mathcal{K}(u_0,u_1+v_1))\mathbf{1}_{E_T\times F})(\tau)\d \tau, \quad v(T)=u_1+v_1.
\end{equation}
We plug \eqref{eq: v-int-form} into \eqref{eq: duhamel}. Then, $u(t)=v(t)-\int_0^tS(t-\tau)(u\partial_x u)(\tau)\d \tau$ satisfies the equation 
\begin{equation*}
\partial_tu+\partial_x^3u+u\partial_xu=\mathcal{L}(\mathcal{K}(u_0,u_1+v_1))\mathbf{1}_{E_T\times F}, \;    u(0,x)=u_0(x),
\end{equation*}
and $u(T)=u_1$, which implies that the map $\Psi: Z^{0,\frac{1}{2}}_T\to Z^{0,\frac{1}{2}}_T$ defined via
\begin{equation}\label{eq: defi-psi-op}
\Psi(u):=S(t)u_0+\int_0^tS(t-\tau)(\mathcal{L}(\mathcal{K}(u_0,u_1+v_1))\mathbf{1}_{E_T\times F})(\tau)\d \tau-\int_0^tS(t-\tau)(u\partial_x u)(\tau)\d \tau,
\end{equation}
has a fixed point. It suffices to show that $\Psi$ is a contraction map in a bounded ball in $Z^{0,\frac{1}{2}}_T$. We split the norm $\|\Psi(u)\|_{Z^{0,\frac{1}{2}}_T}$ into the following three parts:
\begin{align*}
\|\Psi(u)\|_{Z^{0,\frac{1}{2}}_T}&\leq \underbrace{\|S(t)u_0\|_{Z^{0,\frac{1}{2}}_T}}_{J_1}+\underbrace{\|\int_0^tS(t-\tau)(\mathcal{L}(\mathcal{K}(u_0,u_1+v_1))\mathbf{1}_{E_T\times F})(\tau)\d \tau\|_{Z^{0,\frac{1}{2}}_T}}_{J_2}\\
&+\underbrace{\|\int_0^tS(t-\tau)(u\partial_x u)(\tau)\d \tau\|_{Z^{0,\frac{1}{2}}_T}}_{J_3}
\end{align*}
By \eqref{eq: bound by Hs} and \eqref{eq: stri-est-Z} in Lemma \ref{lem: bourgain-basic-est}, we know that 
\begin{equation*}
J_1\leq C\|u_0\|_{L^2(\T)},\;\;J_3\leq C\|u\partial_xu\|_{Z^{0,-\frac{1}{2}}_T}.
\end{equation*}
Applying Lemma \ref{lem: bourgain bilinear-est}, we know that $\|u\partial_xu\|_{Z^{0,-\frac{1}{2}}_T}\leq CT^{\theta}\|u\|^2_{X_T^{0,\frac{1}{2}}}$, which yields that
\begin{equation*}
J_1\leq C\|u_0\|_{L^2(\T)},\;\;J_3\leq CT^{\theta}\|u\|^2_{Z^{0,\frac{1}{2}}}.
\end{equation*}
Now we focus on $J_2$. Applying Lemma \ref{lem: 0-avg-op} and Proposition \ref{prop: lckdv-result}, we know that
\[
\|\mathcal{L}(\mathcal{K}(u_0,u_1+v_1))\mathbf{1}_{E_T\times F}\|_{L^2([0,T]\times\T)}\leq C\left(\|u_0\|_{L^2(\T)}+\|u_1\|_{L^2(\T)}+\|v_1\|_{L^2(\T)}\right).
\]
Due to \eqref{eq: v-1-L^2}, we know that $\|v_1\|_{L^2(\T)}\leq CT^{\theta}\|u\|^2_{Z_T^{0,\frac{1}{2}}}$. Combining with \eqref{eq: stri-est-Z} in Lemma \ref{lem: bourgain-basic-est} again, we obtain
\begin{align*}
J_2&\leq C\|\mathcal{L}(\mathcal{K}(u_0,u_1+v_1))\mathbf{1}_{E_T\times F}\|_{Z_T^{0,-\frac{1}{2}}}\\
&\leq C\|\mathcal{L}(\mathcal{K}(u_0,u_1+v_1))\mathbf{1}_{E_T\times F}\|_{L^2([0,T]\times\T)}\\
&\leq C(\|u_0\|_{L^2(\T)}+\|u_1\|_{L^2(\T)}+T^{\theta}\|u\|^2_{Z_T^{0,\frac{1}{2}}}).
\end{align*}
In summary, there exists a constant $C_{\Psi}>0$, independent of $u_0$, such that
\[
\|\Psi(u)\|_{Z^{0,\frac{1}{2}}_T}\leq C_{\Psi}(\|u_0\|_{L^2(\T)}+\|u_1\|_{L^2(\T)}+\|u\|^2_{Z^{0,\frac{1}{2}}_T}).
\]
For $R>0$, let $B_R$ be the ball centered at zero with radius $R$, defined by
\[
B_R:=\{u\in Z_T^{0,\frac{1}{2}};\|u\|_{Z_T^{0,\frac{1}{2}}}<R\}.
\]
For any $u,\widetilde{u}\in B_R$, due to the definition of $\Psi$ in \eqref{eq: defi-psi-op}, we have
\begin{gather*}
\Psi(u)-\Psi(\widetilde{u})=\int_0^tS(t-\tau)\left(\mathcal{L}(\mathcal{K}(u_0,u_1+v_1)-\mathcal{K}(u_0,u_1+\widetilde{v}_1))\mathbf{1}_{E_T\times F}\right)(\tau)\d \tau\\
-\int_0^tS(t-\tau)(u\partial_x u-\widetilde{u}\partial_x\widetilde{u})(\tau)\d \tau
\end{gather*}
Then we compute the norm $\|\Psi(u)-\Psi(\widetilde{u})\|_{Z_T^{0,\frac{1}{2}}}$ as
\begin{align*}
\|\Psi(u)-\Psi(\widetilde{u})\|_{Z_T^{0,\frac{1}{2}}}&\leq C\|\mathcal{K}(u_0,u_1+v_1)-\mathcal{K}(u_0,u_1+\widetilde{v}_1)\|_{Z_T^{0,\frac{1}{2}}}\\
&\quad +C\|\int_0^tS(t-\tau)\frac{1}{2}\partial_x\left((u-\widetilde{u})(u+\widetilde{u})\right)\d\tau\|_{Z_T^{0,\frac{1}{2}}}\\
&\leq C\|v_1-\widetilde{v}_1\|_{Z_T^{0,\frac{1}{2}}}+C\|u-\widetilde{u}\|_{Z_T^{0,\frac{1}{2}}}\|u+\widetilde{u}\|_{Z_T^{0,\frac{1}{2}}}.
\end{align*}
Thanks to the formula $v_1(x;u)=\int_0^TS(T-\tau)(u\partial_x u)(\tau)\d \tau$, there exists a constant $C'_{\Psi}>0$, independent of $u_0$, such that
\begin{gather*}
\|\Psi(u)-\Psi(\widetilde{u})\|_{Z_T^{0,\frac{1}{2}}}
\leq C'_{\Psi}\|u-\widetilde{u}\|_{Z_T^{0,\frac{1}{2}}}\|u+\widetilde{u}\|_{Z_T^{0,\frac{1}{2}}}.
\end{gather*}
If we choose $R$ and $\delta$ such that $C_{\Psi}(\delta R+R^2)<R$ and $C'_{\Psi}R<\frac{1}{2}$ with $\|u_0\|_{L^2(\T)}+\|u_0\|_{L^2(\T)}<\delta$, then for any $u,\widetilde{u}\in B_R$, we have
\[
\|\Psi(u)\|_{Z_T^{0,\frac{1}{2}}}<R,\mbox{ and }\|\Psi(u)-\Psi(\widetilde{u})\|_{Z_T^{0,\frac{1}{2}}}\leq \frac{1}{2}\|u-\widetilde{u}\|_{Z_T^{0,\frac{1}{2}}}.
\]
Therefore, $\Psi$ is a contraction map and then the proof is complete.
\end{proof}

\vspace{4mm}

\section{Applications to exponential stabilization}\label{sec: damping}

In this final section, we provide an application of the observability inequalities obtained in Section \ref{sec: Observability from space-time measurable sets}. In fact, we shall establish a uniform exponential decay of solutions to a localized damping version of \eqref{equ-1}, at $L^2(\T)$-level norm as time goes to infinity. The problem is as follows. 
Let $a\in L^1_x(\T;L^\infty_t(0,\infty))$ and consider the damped dispersive equation on torus $\T$
\begin{align}\label{equ-decay}
 i\partial_tu+P(D)u+ia(t,x)u=0, (t,x)\in \R^+\times\T, \quad u|_{t=0}=u_0\in L^2(\T).   
\end{align}
Assume that $a(t,x)\geq 0$ for all $(t,x)\in \R^+\times\T$. Let $u$ be the solution of \eqref{equ-decay}. The standard energy estimate gives that for every $T>0$
\begin{align*}
\|u(0,\cdot)\|^2_{L^2(\T)}=\|u(T,\cdot)\|^2_{L^2(\T)}+2\int_0^T\int_\T a(t,x)|u(t,x)|^2\d x \d t.  
\end{align*}
The sign condition on $a(t,x)$ gives a damping effect, since it implies that 
\begin{align*}
    \|u(T,\cdot)\|_{L^2(\T)}\leq \|u_0\|_{L^2(\T)}.
\end{align*}
It is interesting to ask that whether an exponential decay like
\begin{align}\label{equ-decay-616-0}
    \|u(T,\cdot)\|_{L^2(\T)}\leq  e^{-\gamma t}\|u_0\|_{L^2(\T)}, \quad \forall u_0\in L^2(\T)
\end{align}
holds for some constant $\gamma>0$. It is proved in \cite{Russell-Zhang-96,PVZuazua2002,LRZ-2010} that \eqref{equ-decay-616-0} holds for the linear KdV equation (namely $p(k)=k^3$)  with a time-independent, localized damping term in the sense that
\begin{align}\label{equ-decay-616-1}
a(t,x) \equiv a(x)\geq a_0>0, \quad x\in \omega, 
\end{align}
where $\omega\subset \T$ is a nonempty open set. Similar results are obtained in \cite{BurqZworski2019} for the linear Schr\"odinger  (namely $p(k)=k^2$) when \eqref{equ-decay-616-1} holds with a measurable set $E$ of positive measure\footnote{Though their original result is stated in 2d case, the proof also works in 1d clearly.}.
However, little is known when $a$ depends on both time and space variables and $a$ has a positive lower bound only on a space-time positive measure set. Our aim here is to provide some results in this direction.

To state our result, we start with the following assumption ${\bf (A)}$ for a function $a\in L^1_x(\T;L^\infty_t(0,\infty))$.

\vspace{1mm}
\begin{itemize}
    \item[{\bf (A)}] there exists $T>0$ and $\alpha_0>0$ such that the set of restricitons of $a$ on $[(n-1)T,nT]\times \T$,
\begin{align}\label{equ-decay-Aset}
\mathcal{A}:=\{a_n\in L^1_x(\T;L^\infty_t[0,T]):a_n(t,x)= a(t,x)|_{\{(t,x)\in[(n-1)T,T]\times \T},n\geq1\},    
\end{align}
is precompact in $L^1_x(\T;L^\infty_t[0,T])$ and $\inf_{n\in\N}\|a_n\|_{L^1_x(\T;L^\infty_t[0,T])}\geq \alpha_0$.
\end{itemize}
Functions satisfying the assumption ${\bf (A)}$ exhibit ``controlled temporal behavior" when restricted to consecutive time blocks \([(n-1)T, nT] \times \mathbb{T}\). The precompactness of \( \mathcal{A} \) ensures ``temporal regularity" across blocks, while the lower bound \( \inf_{n\in\N}\|a_n\|_{L^1_x(\T;L^\infty_t[0,T])}\geq \alpha_0 > 0 \) prevents degeneracy (e.g., vanishing or decay to zero). The time-block length \( T \) is fixed and independent of \( n \).  
The choice of \( T \) depends on the function \( a \) (i.e., different \( a \) may require different \( T \)). We provide some concrete examples as follows.
\begin{itemize}
    \item [(1)] Time periodic case. Since $a|_{[(n-1)T, nT]}=a|_{[0,T]}$, \( \mathcal{A}\) is a singleton. It suffices to require that $\|a\|_{L^1_x(\T;L^\infty_t[0,T])}>0$.
    \item[(2)] Modulated Wave (non-periodic case). For fixed \( g \in C(\mathbb{T}) \) and $T>0$, let \( a(t,x) = \sin\left(2\pi t / T + \xi_n\right) g(x) \), where \( \{\xi_n\} \) is an arbitrary sequence in \([0, 2\pi)\). The set \(\mathcal{A}= \{ a_n=a|_{[(n-1)T, nT]} \}_{n\in\N^*} \) is bounded and equicontinuous in \( L^1_x(\T;L^\infty_t[0,T]) \). By Arzelà-Ascoli, it is precompact. Moreover, by definition,
       \[
       \|a_n\|_{L^1_x(\T;L^\infty_t[0,T])}\geq \int_{\mathbb{T}} |g(x)|  \d x \cdot c > 0,
    \]
    where \( c = \inf_{\xi\in[0,2\pi)} \sup_{t\in[0,T]} |\sin(2\pi t / T + \xi)| > 0 \).
Thus \( a \) satisfies the assumption ${\bf (A)}$ despite lacking strict periodicity. The phases \( \xi_n \) can wander arbitrarily, but the functional form ensures uniformity across blocks.  
\end{itemize}

\begin{theorem}[Exponential decay]\label{thm-decay}
Assume that $0\leq a\in L^1_x(\T;L^\infty_t(0,\infty))$ satisfies the assumption ${\bf (A)}$. Then there exist  constants $C,\gamma>0$ such that every solutions of \eqref{equ-decay} satisfies the exponential decay
$$
\|u(t,\cdot)\|_{L^2(\T)}\leq Ce^{-\gamma t}\|u_0\|^2_{L^2(\T)}, \quad \forall t\geq 0.
$$
\end{theorem}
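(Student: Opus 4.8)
The plan is to upgrade the free-flow observability of Lemma~\ref{lem-ob-compact-pertur} to an observability for the damped flow over a single time block, and then to iterate. Since $P(D)$ has a real symbol and is self-adjoint, \eqref{equ-decay} reads $\partial_t u = iP(D)u - a(t,x)u$, and the stated energy identity gives, on the $n$-th block $[(n-1)T,nT]$,
\begin{equation*}
\|u((n-1)T)\|_{L^2(\T)}^2 - \|u(nT)\|_{L^2(\T)}^2 = 2\int_{(n-1)T}^{nT}\!\!\int_\T a|u|^2\,\d x\,\d t =: 2D_n .
\end{equation*}
Thus it suffices to establish a \emph{uniform one-block observability} for the damped solution. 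Translating to $[0,T]$ via $v(s)=u((n-1)T+s)$, $v_0=u((n-1)T)$, $a_n(s,x)=a((n-1)T+s,x)$, the target is a constant $\tilde C$, independent of $n$, with
\begin{equation*}
\|v_0\|_{L^2(\T)}^2 \leq \tilde C \int_0^T\!\!\int_\T a_n|v|^2\,\d x\,\d s, \qquad \forall\, a_n\in\mathcal A,\ v_0\in L^2(\T).
\end{equation*}

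To obtain this I would compare $v$ with the free evolution $w(s)=e^{isP(D)}v_0$. Under assumption~{\bf (A)} the family $\mathcal A$ is precompact and bounded below in norm, so Lemma~\ref{lem-ob-compact-pertur} applies and gives the uniform free observability $\|v_0\|^2 \leq C\int_0^T\!\int_\T a_n|w|^2$. Setting $\rho=w-v$, Duhamel's formula yields $\rho(s)=\int_0^s e^{i(s-\sigma)P(D)}(a_n v)(\sigma)\,\d\sigma$, whence $\int a_n|w|^2 \leq 2D_n + 2\int a_n|\rho|^2$, and it remains to bound $\int a_n|\rho|^2$ by a \emph{constant} multiple of $D_n$. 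The crucial observation is that the damping is not small (it is bounded below by $\alpha_0$), so a naive bootstrap of $\|v\|$ cannot close; instead one expresses the remainder directly through the dissipation. Using the smoothing estimate of Lemma~\ref{lem-Stri-LinftyL2} and its dual one has $\|\rho\|_{L^\infty_x(\T;L^2_t[0,T])}^2 \lesssim \|a_n v\|_{L^1_x L^2_t}^2$, and then the elementary pointwise bound $a_n^2|v|^2 \leq \|a_n(\cdot,x)\|_{L^\infty_t}\,a_n|v|^2$ followed by Cauchy--Schwarz gives $\|a_n v\|_{L^1_x L^2_t}^2 \leq \|a_n\|_{L^1_x L^\infty_t} D_n$. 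With $M:=\sup_{\mathcal A}\|\cdot\|_{L^1_x L^\infty_t}<\infty$ (precompactness) this produces $\int a_n|\rho|^2 \leq \|a_n\|_{L^1_x L^\infty_t}\|\rho\|_{L^\infty_x L^2_t}^2 \lesssim M^2 D_n$, so that $\|v_0\|^2 \leq C(2+C_0 M^2)D_n =: \tilde C\,D_n$, the desired one-block observability.

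Feeding this back into the energy identity gives $\|u(nT)\|^2 \leq \kappa\,\|u((n-1)T)\|^2$ with $\kappa := 1-2/\tilde C$, uniformly in $n$ since $a_n\in\mathcal A$ for every $n$; moreover $\kappa\in[0,1)$ automatically, because the energy identity forces $D_n\leq\tfrac12\|v_0\|^2$ and hence $\tilde C\geq 2$. Iterating yields $\|u(nT)\|^2 \leq \kappa^n\|u_0\|^2$, and for intermediate $t\in[(n-1)T,nT]$ monotonicity of the energy gives $\|u(t)\|^2 \leq \kappa^{n-1}\|u_0\|^2$. Taking $\gamma=-\tfrac{1}{2T}\log\kappa>0$ and $C=\kappa^{-1/2}$ then gives $\|u(t,\cdot)\|_{L^2(\T)}\leq Ce^{-\gamma t}\|u_0\|_{L^2(\T)}$ (the degenerate case $\kappa=0$ being immediate).

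The main technical obstacle is the remainder estimate $\|\rho\|_{L^\infty_x L^2_t}\lesssim \|a_n v\|_{L^1_x L^2_t}$: this is the retarded (truncated Duhamel) version of the dual of Lemma~\ref{lem-Stri-LinftyL2}, and since the time exponents coincide ($L^2_t$ on both sides) the Christ--Kiselev lemma does not apply at the endpoint, so the truncated integral must be handled directly (e.g.\ via the explicit Fourier representation on $\T$ or a $TT^*$ argument on the finite interval $[0,T]$). The roughness of $a_n$ in $x$ (only $L^1_x$) is exactly what forces the $L^\infty_x L^2_t\,/\,L^1_x L^2_t$ duality rather than a plain $L^2_x$ energy estimate; an alternative route via weak compactness and a contradiction argument would meet the same difficulty. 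Everything else --- the energy identity, the invocation of Lemma~\ref{lem-ob-compact-pertur}, and the geometric iteration --- is routine.
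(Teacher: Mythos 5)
Your proposal follows, in substance, the same three-step strategy as the paper: the block-wise energy identity, a uniform one-block observability for the damped flow obtained by comparing with the free flow via Duhamel, and a geometric iteration. The one structural difference is that you bound the remainder $\rho$ directly by the dissipation $D_n$ (via the factorization $a_n=a_n^{1/2}\cdot a_n^{1/2}$ and Cauchy--Schwarz) and so obtain the damped observability with an explicit constant $\tilde C=C(2+C_0M^2)$, whereas the paper (Lemma \ref{lem-decay-ob}) runs the very same chain of inequalities --- Duhamel, the retarded Strichartz bound, the same factorization --- inside a compactness/contradiction argument. Your unrolled version is slightly cleaner and makes the constant visible, though there is no genuine gain in effectivity, since both arguments rest on Lemma \ref{lem-ob-compact-pertur}, which is itself proved by contradiction. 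Your iteration step, including the observation that $\tilde C\geq 2$ forces $\kappa=1-2/\tilde C\in[0,1)$, is correct.

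The one substantive hole is precisely the estimate you flag as the main obstacle: the retarded endpoint bound $\|\int_0^t e^{i(t-s)P(D)}f\,\d s\|_{L^\infty_x(\T;L^2_t[0,T])}\lesssim \|f\|_{L^1_x(\T;L^2_t[0,T])}$ is asserted, not proved. You are right that the Christ--Kiselev lemma is unavailable because both time exponents equal $2$, and right that the truncated integral must be handled directly; but this is where the real work of the paper's proof lies. The paper establishes it (Lemma \ref{lem-decay-Stri-non}) by conjugating with $e^{-t}$, taking the Fourier transform in time, and invoking a uniform resolvent estimate $\|(P(D)-z)^{-1}f\|_{L^\infty(\T)}\lesssim\|f\|_{L^1(\T)}$ for $|\mathrm{Im}\, z|\geq 1$ (Lemma \ref{lem-u-resolve}), whose proof in turn requires a nontrivial lattice-point count, $\#\{k:|p(k)-\tau|\leq\lambda\}\lesssim \lambda\tau^{1/d-1}$, combined with a dyadic summation; this does not follow from the homogeneous estimate of Lemma \ref{lem-Stri-LinftyL2} and its dual alone. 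So your proof is correct in outline and complete modulo this lemma; to finish it you would need to supply the resolvent argument (or an equivalent direct Fourier/$TT^*$ treatment on $[0,T]$), which is exactly the content of Step 1 of the paper's proof.
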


\begin{figure}[htp]
    \centering
    \includegraphics[width=0.8\linewidth]{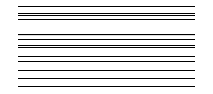}
    \caption{In previous references, the damping is posed on $(t,x)\in G=[0,\infty)\times E$, where $E\subset \T$ is open or measurable with positive measure, see the dark part of the figure.} 
    \label{fig:classicaldamping}
\end{figure}

\begin{figure}[htp]
    \centering
    \includegraphics[width=1\linewidth]{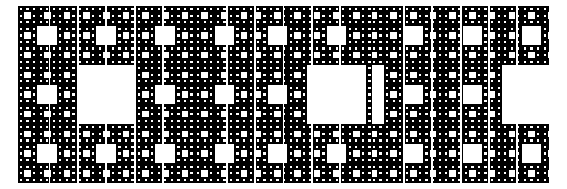}
    \caption{In our work, the damping region  $G$ is allowed to be the dark part of the figure, where   the horizontal direction representing the time axis and the vertical direction representing the spatial axis. Clearly, $G$ does not contains a subset of product structure as that in Fig. \ref{fig:classicaldamping}. }
    \label{fig:ourdamping}
\end{figure}

A particular interesting case is that $a(t,x)=a_0 1_{G}$ for some $a_0>0$ and $G$ is a subset set of $[0,\infty)\times \T$. In other words, the damping mechanism is posed on the set $G$. In Theorem \ref{thm-decay} we can take
\begin{align}\label{equ-618-0}
G=\bigcup_{n\geq 0} G_n, \quad G_n\subset [nT,(n+1)T)\times \T \mbox{ given by } 1_{G_n}(t,x)=1_{G_0}(\lfloor t+\xi_n \rfloor, x),    
\end{align}
where $\{\xi_n\}\subset [0,T]$ is a sequence, $\lfloor t+\xi_n \rfloor=t+\xi_n\mod T$, $G_0$ is a subset of $[0,T]\times \T$ with positive measure, see Fig. \ref{fig:ourdamping}. Indeed, one can check that $a(t,x)$ satisfies the assumption ${\bf (A)}$.

The rest of this subsection is devoted to the proof of Theorem \ref{thm-decay}. 
The proof consists of the following three steps.
\begin{itemize}
\item [\bf Step 1.] Nonhomogeneous Strichartz estimates. We establish a uniform resolvent estimate in Lemma \ref{lem-u-resolve}. Based on that, we prove $\|\int_0^te^{i(t-s)P(D)}f\d s\|_{L^\infty_x(\T;L^2_t[0,T])}\lesssim\|f\|_{L^1_x(\T;L^2_t[0,T])}$.
\item[\bf Step 2.] Uniform observability inequalities. Based on the Strichartz estimates in step 1 and observability inequalities proved in Section \ref{sec: Observability from space-time measurable sets}, we show that
\begin{align}\label{equ-618-1}
 \|u_0\|^2_{L^2(\T)}\lesssim\int_0^T\int_\T a_n(t,x)|u(t,x)|^2\d x \d t  
\end{align}
holds uniformly for all solution of \eqref{equ-decay} and all $a_n\in \mathcal{A}$, defined by \eqref{equ-decay-Aset}.

\item[\bf Step 3.] Exponential decay. Thanks to \eqref{equ-618-1}, we can show the contraction property of the $L^2$-norm, namely $\exists \alpha\in(0,1)$ such that $\|u(nT)\|_{L^2(\T)}\leq \alpha \|u((n-1)T)\|_{L^2(\T)}$ for $n\geq1$. After iteration, we obtain the desired exponential decay.
\end{itemize}
We start with a uniform resolvent estimate.
\begin{lemma}[Uniform resolvent estimate]\label{lem-u-resolve}
There exists a constant $C>0$ such that for all $z\in \C, | \text{\rm Im } z|\geq 1$ and $f\in L^1(\T)$,
\begin{align}
 \|(P(D)-z)^{-1}f\|_{L^\infty(\T)}\leq C\|f\|_{L^1(\T)}.   
\end{align}
\end{lemma}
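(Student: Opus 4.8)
The plan is to realize the resolvent as a convolution operator on $\T$ and to reduce the claim to a uniform bound on the $L^\infty$ norm of its kernel. Writing $f=\sum_k\widehat f(k)e^{ikx}$, the operator acts by $\widehat{(P(D)-z)^{-1}f}(k)=(p(k)-z)^{-1}\widehat f(k)$, which is well defined since $p(k)\in\R$ and $|\operatorname{Im}z|\ge 1$ force $p(k)-z\neq 0$. Hence $(P(D)-z)^{-1}f=f*K_z$ with
\begin{equation*}
K_z(x)=\frac{1}{2\pi}\sum_{k\in\Z}\frac{e^{ikx}}{p(k)-z},
\end{equation*}
and by Young's inequality $\|f*K_z\|_{L^\infty(\T)}\le\|K_z\|_{L^\infty(\T)}\|f\|_{L^1(\T)}$. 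So it suffices to prove $\sup_{|\operatorname{Im}z|\ge1}\|K_z\|_{L^\infty(\T)}<\infty$. I would then bound the kernel pointwise by
\begin{equation*}
|K_z(x)|\le\frac{1}{2\pi}\sum_{k\in\Z}\frac{1}{|p(k)-z|}\le\frac{1}{2\pi}\sum_{k\in\Z}\frac{1}{\sqrt{(p(k)-a)^2+1}},
\end{equation*}
where $a=\operatorname{Re}z$ and I used $|\operatorname{Im}z|^2\ge1$. Thus the whole statement reduces to showing that $S(a):=\sum_{k\in\Z}\big((p(k)-a)^2+1\big)^{-1/2}$ is bounded uniformly in $a\in\R$.

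The arithmetic heart of the argument is a sublevel-set counting estimate: for every $a\in\R$ and every $R\ge1$,
\begin{equation*}
N(a,R):=\#\{k\in\Z:|p(k)-a|\le R\}\le C_d\,R^{1/d}+C_d.
\end{equation*}
Here I would invoke Pólya's inequality, which bounds the Lebesgue measure of $\{t\in\R:|q(t)|\le R\}$ by $C_d\,R^{1/d}$ for any monic real polynomial $q$ of degree $d$; applying it to $q=p-a$ and observing that this sublevel set is a union of at most $d$ intervals (since $q'$ has at most $d-1$ zeros, $q$ is monotone on at most $d$ pieces), the number of integers it contains is at most its measure plus $d$.

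With the counting estimate in hand, a dyadic decomposition of $S(a)$ finishes the proof: splitting into the terms with $|p(k)-a|\le1$ and the shells $2^j<|p(k)-a|\le 2^{j+1}$ for $j\ge0$, and using $\big((p(k)-a)^2+1\big)^{-1/2}\le 2^{-j}$ on the $j$-th shell, I get
\begin{equation*}
S(a)\le N(a,1)+\sum_{j\ge0}2^{-j}N(a,2^{j+1})\lesssim_d 1+\sum_{j\ge0}2^{-j}\big(2^{(j+1)/d}+1\big)\lesssim_d 1+\sum_{j\ge0}2^{-j(1-1/d)}.
\end{equation*}
The geometric series converges precisely because $d\ge2$ makes the exponent $1-1/d\ge\frac12$ strictly positive; this is exactly where the standing hypothesis $d\ge2$ enters (for $d=1$ the sum diverges). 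The main obstacle is therefore establishing the sharp power $R^{1/d}$ in the counting estimate uniformly in $a$ and in the lower-order coefficients of $p$, and this is supplied cleanly by Pólya's theorem, everything else being routine. Combining the three steps yields $\|K_z\|_{L^\infty(\T)}\le\frac{1}{2\pi}\sup_{a\in\R}S(a)\le C_d$, and hence the claimed uniform resolvent bound.
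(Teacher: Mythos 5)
Your proof is correct, but it follows a genuinely different route from the paper's. The paper argues directly on the Fourier series with a case analysis: for $|\operatorname{Re}z|\lesssim 1$ the multiplier decays like $(1+|k|)^{-d}$ and the Sobolev embedding $H^1(\T)\hookrightarrow L^\infty(\T)$ suffices; for $|\operatorname{Re}z|=\tau\gg1$ it splits frequencies into a non-resonant region $|p(k)-\tau|>\frac14|p(k)|$ (handled again by Sobolev embedding) and a resonant region where necessarily $k\sim\tau^{1/d}$, and there it counts lattice points in $\{|p(k)-\tau|\le\lambda\}$ via the mean value theorem ($|p'(\xi)|\sim\tau^{(d-1)/d}$), obtaining $\#S_\lambda\lesssim\lambda\tau^{\frac1d-1}$ before summing over dyadic shells. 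You instead collapse everything into one uniform statement, $\sup_{a\in\R}\sum_{k\in\Z}\bigl((p(k)-a)^2+1\bigr)^{-1/2}\le C_d$, which controls the sum of the absolute values of the Fourier coefficients of the resolvent kernel (hence its $L^\infty$ norm, via Young), and you get the lattice-point count $N(a,R)\lesssim_d R^{1/d}+1$ from P\'olya's sublevel-set inequality for monic polynomials together with the observation that the sublevel set is a union of at most $d$ intervals. What your approach buys is uniformity and brevity: no case analysis in $\operatorname{Re}z$, no frequency localization, no Sobolev embedding, and in fact a slightly stronger conclusion (an absolutely summable Fourier-coefficient bound for the kernel, uniform in $z$). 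The price is the appeal to P\'olya's theorem as an external classical input, whereas the paper's argument is self-contained; also your count $R^{1/d}$ is cruder than the paper's localized count $\lambda\tau^{\frac1d-1}$ in the resonant regime, though both suffice since in either proof the dyadic series converges exactly because the exponent $1-\frac1d$ is positive, i.e.\ because $d\ge2$ --- the hypothesis enters at the same point in both arguments.
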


\begin{proof}
Fix $f\in L^1(\T)$ and write $f(x)=\sum_{k\in \Z}c_ke^{ikx}$. If $z=\tau-is$, then $(P(D)-z)^{-1}f=\sum_{k\in \Z}(p(k)-\tau+i s)^{-1}c_ke^{ikx}$. Thus it suffices to prove that there exists a constant $C>0$ such that \begin{align}\label{equ-resolv-1}
\|\sum_{k\in \Z}(p(k)-\tau+i s)^{-1}c_ke^{ikx}\|_{L^\infty(\T)}\leq C\|f\|_{L^1(\T)}   
\end{align}
holds for all $\tau\in \R$ and $s\in \R$ with $|s|\geq 1$.

We first reduce  \eqref{equ-resolv-1} to the case $|\tau|\gg 1$. In fact, if $|\tau|\lesssim 1$, then 
$$
|(p(k)-\tau+i s)^{-1}|\lesssim (|p(k)-\tau|+|s|)^{-1}\lesssim (1+|k|)^{-d}
$$
which, together with the Sobolev embedding $H^1(\T)\hookrightarrow L^\infty(\T)$, shows that
\begin{align*}
&\|\sum_{k\in \Z}(p(k)-\tau+i s)^{-1}c_ke^{ikx}\|_{L^\infty(\T)}\lesssim\|\sum_{k\in \Z}(p(k)-\tau+i s)^{-1}c_ke^{ikx}\|_{H^1(\T)}\\
&\lesssim \left(\sum_{k\in \Z} \Big|(1+|k|)(p(k)-\tau+i s)^{-1}c_k\Big|^2 \right)^{1/2}\\
&\lesssim \left(\sum_{k\in \Z}(1+|k|)^{-2(d-1)}\right)^{1/2}\sup_{k\in \Z}|c_k| \lesssim \|f\|_{L^1(\T)}.
\end{align*}
Thus \eqref{equ-resolv-1} holds in this case.

Now assume that $|\tau|\gg 1$. Since $p$ is a monic polynomial with degree $d$, its dominated term is $k^d$ as $k\to \infty$. Thus there exists a large $K_0>0$ such that
\begin{align}\label{equ-resolv-2}
    \frac{1}{2}|k|^d\leq |p(k)|\leq \frac{3}{2}|k|^d  \quad \text{ for all } |k|\geq K_0.
\end{align}
Now we split the sum into high frequency part $|k|\geq K_0$ and low frequency part $|k|<K_0$,
\begin{multline*}
\|\sum_{k\in \Z}(p(k)-\tau+i s)^{-1}c_ke^{ikx}\|_{L^\infty(\T)}\\
\leq \|\sum_{|k|<K_0}(p(k)-\tau+i s)^{-1}c_ke^{ikx}\|_{L^\infty(\T)}+\|\sum_{|k|\geq K_0}(p(k)-\tau+i s)^{-1}c_ke^{ikx}\|_{L^\infty(\T)}.  
\end{multline*}
The low frequency part is easy, similarly to the argument in case $|\tau|\lesssim 1$, we have
$$
\|\sum_{|k|<K_0}(p(k)-\tau+i s)^{-1}c_ke^{ikx}\|_{L^\infty(\T)}\lesssim\left(\sum_{|k|< K_0} |(1+|k|)c_k|^2\right)^{1/2}\lesssim \sup_{|k|<K_0}|c_k|\lesssim \|f\|_{L^1(\T)}. 
$$
Thus, it remains to prove that  
\begin{align}\label{equ-resolv-3}
\|\sum_{|k|\geq K_0}(p(k)-\tau+i s)^{-1}c_ke^{ikx}\|_{L^\infty(\T)}\lesssim \|f\|_{L^1(\T)}, \quad |\tau|\gg 1, |s|\geq 1.    
\end{align}

In the sequel, we only prove \eqref{equ-resolv-3} when $\tau>0$, the proof in case $\tau<0$ is similar. We split further the sum in \eqref{equ-resolv-3} into two parts,
\begin{align*}
\sum_{|k|\geq K_0}(p(k)-\tau+i s)^{-1}c_ke^{ikx}=\sum_{|k|\geq K_0, |p(k)-\tau|>\frac{1}{4}|p(k)|}+\sum_{|k|\geq K_0, |p(k)-\tau|\leq \frac{1}{4}|p(k)|}.     
\end{align*}
For the first part, using \eqref{equ-resolv-2} and the Sobolev embedding $H^1(\T)\hookrightarrow L^\infty(\T)$ again, 
\begin{align}\label{equ-resolv-4}
\|\sum_{|k|\geq K_0,|p(k)-\tau|>\frac{1}{4}|p(k)|}(p(k)-\tau+i s)^{-1}c_ke^{ikx}\|_{L^\infty(\T)}\lesssim \|f\|_{L^1(\T)}.
\end{align}
To estimate the second part, we first claim that, if $0<\lambda\leq \frac{1}{2}\tau$, then
\begin{align}\label{equ-resolv-5}
\|\sum_{k\in S_\lambda} b_ke^{ikx}\|_{L^\infty(\T)}\lesssim \lambda \tau^{\frac{1}{d}-1}\sup_{k\in S_\lambda}|b_k|  
\end{align}
where $S_\lambda=\{k\in \Z:|k|\geq K_0,|p(k)-\tau|\leq \lambda\}$. Since $\|\sum_{k\in S_\lambda} b_ke^{ikx}\|_{L^\infty(\T)}\leq \sharp S_\lambda \cdot \sup_{k\in S_\lambda}|b_k|$,\eqref{equ-resolv-5} follows from the estimate
\begin{align}\label{equ-resolv-6}
 \sharp S_\lambda \lesssim \lambda \tau^{\frac{1}{d}-1}.   
\end{align}
When $d\geq 2$ is odd,  \eqref{equ-resolv-6} follows clearly from the statement: the distance of any two elements in $S_\lambda$ is   $\lesssim \lambda \tau^{\frac{1}{d}-1}$, namely
\begin{align}\label{equ-resolv-7}
 |k'-k''|\lesssim  \lambda \tau^{\frac{1}{d}-1}, \quad \forall  k',k''\in S_\lambda.  
\end{align}
Indeed, by the definition of $S_\lambda$, we have $k\sim \tau^{\frac{1}{d}}$ if $k\in S_\lambda$ and
\begin{align}\label{equ-resolv-8}
 |p(k')-p(k'')|\leq |p(k')-\tau|+|p(k'')-\tau|\leq 2\lambda, \quad \forall k',k''\in S_\lambda.    
\end{align}
Moreover, by the mean value theorem,
\begin{align}\label{equ-resolv-9}
   |p(k')-p(k'')|=|p'(\xi)||k'-k''|\sim \xi^{d-1}|k'-k''|\sim  \tau^{\frac{d-1}{d}}|k'-k''|.
\end{align}
Combining \eqref{equ-resolv-8} and \eqref{equ-resolv-9}, we obtain  \eqref{equ-resolv-7}. This proves \eqref{equ-resolv-6} if $d$ is odd.

When $d\geq 2$ is even, split $S_\lambda=S_\lambda^+\cup S_\lambda^-$, where
$$
S_\lambda^+(\text{resp. }S_\lambda^-)=\Big\{k\in \Z_{>0} (\text{resp. }\Z_{<0}):|k|\geq K_0,|p(k)-\tau|\leq \lambda\Big\}.
$$
Then one can proceeds similarly to show $\sharp S^+_\lambda, \sharp S^-_\lambda \lesssim \lambda \tau^{\frac{1}{d}-1}$, which also gives \eqref{equ-resolv-6}.

Now we use \eqref{equ-resolv-5} to bound the second part. It is easy to check that
$$
\{k\in \Z:|k|\geq K_0, |p(k)-\tau|\leq \frac{1}{4}|p(k)|\}
\subset \{k\in \Z:  |p(k)-\tau|\leq \frac{1}{3}\tau\}.
$$
Thus using  \eqref{equ-resolv-5} we have
\begin{align}\label{equ-resolv-10}
&\|\sum_{|k|\geq K_0,|p(k)-\tau|>\frac{1}{4}|p(k)|}(p(k)-\tau+i s)^{-1}c_ke^{ikx}\|_{L^\infty(\T)}\nonumber\\
&\lesssim \sum_{j\geq 0, 2^j\leq \frac{1}{3}\tau}\|\sum_{2^{j-1}\leq|p(k)-\tau|<2^j}(p(k)-\tau+i s)^{-1}c_ke^{ikx}\|_{L^\infty(\T)}\nonumber\\
&\lesssim \sum_{j\geq 0, 2^j\leq \frac{1}{3}\tau} \frac{2^j}{2^j+|s|}\tau^{\frac{1}{d}-1}\sup_{2^{j-1}\leq|p(k)-\tau|<2^j}|c_k|\nonumber\\
&\lesssim \sum_{j\geq 0} 2^{-j(1-\frac{1}{d})}\sup_{k\in \Z}|c_k|\lesssim \|f\|_{L^1(\T)}.
\end{align}
Combining \eqref{equ-resolv-4} and \eqref{equ-resolv-10} we obtain \eqref{equ-resolv-3}. Thus the proof is complete.
\end{proof}

\begin{lemma}\label{lem-decay-Stri-non}
Let $T>0$. Then there exists a constant $C>0$ such that for all $f\in L^1_x(\T;L^2_t[0,T])$,
$$
\|\int_0^te^{i(t-s)P(D)}f\d s\|_{L^\infty_x(\T;L^2_t[0,T])}\leq C\|f\|_{L^1_x(\T;L^2_t[0,T])}.
$$
\end{lemma}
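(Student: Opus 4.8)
The plan is to realize the retarded Duhamel operator as a Fourier multiplier in the time variable whose symbol is precisely the resolvent $(P(D)-z)^{-1}$, and then feed the uniform resolvent bound of Lemma \ref{lem-u-resolve} into a Plancherel--Minkowski argument. Since Lemma \ref{lem-u-resolve} only controls the resolvent for $|\mathrm{Im}\,z|\geq 1$, the first move is to push the spectral parameter off the real axis by inserting a harmless exponential weight. Extending $f$ by zero outside $[0,T]$ and writing $u(t,x)=\int_0^t e^{i(t-s)P(D)}f(s)\d s=\int_\R \mathbf 1_{\{t-s\geq0\}}e^{i(t-s)P(D)}f(s)\d s$, I set $g(s)=e^{-s}f(s)$ and $w(t)=e^{-t}u(t)$, so that
\begin{equation*}
w(t)=\int_\R K(t-s)g(s)\d s,\qquad K(\sigma)=\mathbf 1_{\{\sigma\geq0\}}e^{-\sigma}e^{i\sigma P(D)}.
\end{equation*}
Because $e^{-t}\in[e^{-T},1]$ on $[0,T]$, and $e^{-s}\leq 1$ for $s\geq0$, the desired bound for $u$ on $[0,T]$ reduces to the global estimate $\|w\|_{L^\infty_x(\T;L^2_t(\R))}\lesssim \|g\|_{L^1_x(\T;L^2_t(\R))}\leq \|f\|_{L^1_x(\T;L^2_t[0,T])}$.

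Next I would compute the time-Fourier transform of $w$ mode by mode in $x$. On the $k$-th spatial Fourier coefficient the kernel is $\mathbf 1_{\{\sigma\geq0\}}e^{-\sigma}e^{i\sigma p(k)}\in L^1_\sigma(\R)$, so Young's inequality applies and the convolution becomes a genuine multiplier; writing $\widetilde{\cdot}$ for the Fourier transform in $t$,
\begin{equation*}
\widetilde{w}(\tau,\cdot)=i\big(P(D)-(\tau-i)\big)^{-1}\widetilde{g}(\tau,\cdot),\qquad m_k(\tau)=\int_0^\infty e^{-\sigma}e^{i\sigma(p(k)-\tau)}\d\sigma=\frac{1}{1-i(p(k)-\tau)}=\frac{i}{p(k)-(\tau-i)}.
\end{equation*}
The crucial gain is that the spectral parameter is now $z=\tau-i$ with $|\mathrm{Im}\,z|=1$, so Lemma \ref{lem-u-resolve} yields, for a.e.\ $\tau$, the uniform bound $\|\widetilde w(\tau,\cdot)\|_{L^\infty_x(\T)}\leq C\|\widetilde g(\tau,\cdot)\|_{L^1_x(\T)}$.

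Finally I would assemble the estimate. By Plancherel in $t$, for every fixed $x$,
\begin{equation*}
\int_\R|\widetilde w(\tau,x)|^2\d\tau\leq C^2\int_\R\|\widetilde g(\tau,\cdot)\|_{L^1_x(\T)}^2\d\tau,
\end{equation*}
and since the right-hand side is independent of $x$ I may take the supremum over $x$ on the left. Minkowski's integral inequality in the order $L^2_\tau L^1_x\hookleftarrow L^1_x L^2_\tau$, followed by Plancherel, then gives
\begin{equation*}
\Big(\int_\R\|\widetilde g(\tau,\cdot)\|_{L^1_x(\T)}^2\d\tau\Big)^{1/2}\leq \int_\T\Big(\int_\R|\widetilde g(\tau,x)|^2\d\tau\Big)^{1/2}\d x=\int_\T\|g(\cdot,x)\|_{L^2_t}\d x=\|g\|_{L^1_x(\T;L^2_t)},
\end{equation*}
which closes the argument. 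The step I expect to be the main obstacle is the rigorous justification of the multiplier identity together with its measure-theoretic bookkeeping: one must verify that $\widetilde g(\tau,\cdot)\in L^1(\T)$ for a.e.\ $\tau$ (which in fact follows from the very Minkowski bound above, giving $\widetilde g\in L^2_\tau L^1_x$) and that the mode-by-mode computation genuinely reconstitutes the operator $(P(D)-(\tau-i))^{-1}$ acting between $L^1_x$ and $L^\infty_x$. Here the exponential weight does double duty---it both shifts $z$ onto the line $|\mathrm{Im}\,z|=1$ where Lemma \ref{lem-u-resolve} is available and renders $K(\sigma)$ absolutely integrable in $\sigma$, so that no principal-value or distributional limit is needed.
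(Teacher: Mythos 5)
Your proof is correct and follows essentially the same route as the paper's: the exponential weight $e^{-t}$ shifts the spectral parameter onto the line $|\mathrm{Im}\,z|=1$ where the uniform resolvent bound of Lemma \ref{lem-u-resolve} applies, and the estimate is assembled by Plancherel in time together with Minkowski's inequality to pass from $L^2_\tau L^1_x$ to $L^1_x L^2_\tau$. The only cosmetic difference is that the paper obtains the resolvent identity $(P(D)+i-\tau)\widehat V=\widehat F$ by Fourier-transforming the damped equation satisfied by $V=ve^{-t}\mathbf 1_{t>0}$, whereas you compute the time-Fourier multiplier of the convolution kernel $\mathbf 1_{\{\sigma\geq 0\}}e^{-\sigma}e^{i\sigma P(D)}$ directly.
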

\begin{proof}
Given $f\in L^1_x(\T;L^2_t[0,T])$. We still use $f$ to denote its zero extension to $(x,t)\in \T\times[0,\infty)$.
Let $v(t)=\int_0^te^{i(t-s)P(D)}f \d s, t\geq 0$. Then $v$ solves $i\partial_tv+P(D)v=f, t>0, 
v|_{t=0}=0.$
It remains to show that 
\begin{align}\label{decay-Stri-non-1}
\|v\|_{L^\infty_x(\T;L^2_t[0,T])}\leq C\|f\|_{L^1_x(\T;L^2_t[0,T])}.    
\end{align}
Let $V=ve^{-t}\mathbf{1}_{t>0}$ and $F=fe^{-t}\mathbf{1}_{0<t<T}$. Then using $v|_{t=0}=0$,  we have for all $t\in \R$,
\begin{align}\label{decay-Stri-non-2}
  i\partial_tV+P(D)V+iV=F.  
\end{align}
Taking Fourier transform of \eqref{decay-Stri-non-2} w.r.t $t$, we obtain for $\tau\in \R$,
\begin{align*}
  (P(D)+i-\tau)\widehat{V}(\tau)=\widehat{F}(\tau).  
\end{align*}
Applying Lemma \ref{lem-u-resolve} to $\widehat{V}$, we obtain $\|\widehat{V}(\tau)\|_{L^\infty(\T)}\leq C\|\widehat{F}(\tau)\|_{L^1(\T)}$.
Combining with Parseval's identity and Minkowski's inequality, we infer
\begin{align*}
\|v\|_{L^\infty_x(\T;L^2_t[0,T])} &\lesssim\|V\|_{L^\infty_x(\T;L^2_t(\R))}= \|\widehat{V}\|_{L^\infty_x(\T;L^2_\tau(\R))}\\
&\lesssim \|\widehat{V}\|_{L^2_\tau(\R;L^\infty_x(\T))}\leq C\|\widehat{F}\|_{L^2_\tau(\R;L^1_x(\T))}\\
&\lesssim \|\widehat{F}\|_{L^1_x(\T;L^2_\tau(\R))}= \|F\|_{L^1_x(\T;L^2_\tau(\R))}\lesssim \|f\|_{L^1_x(\T;L^2_t[0,T])}
\end{align*}
as desired.
\end{proof}

\begin{lemma}[Uniform observability]\label{lem-decay-ob}
Assume that $0\leq a\in L^1_x(\T;L^\infty_t(0,\infty))$ satisfies the assumption ${\bf (A)}$. Then there exists a constant $C>0$, independent of $n$, such that
\begin{equation*}
\|u_0\|^2_{L^2(\T)}\leq C\int_0^T\int_\T a_n(t,x)|u(t,x)|^2\d x \d t,
\end{equation*}
holds for all solution of \eqref{equ-decay} and all $a_n\in \mathcal{A}$, defined by \eqref{equ-decay-Aset}.
\end{lemma}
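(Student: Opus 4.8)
The plan is to deduce the observability for the damped flow from the observability already established for the free flow (Lemma~\ref{lem-ob-compact-pertur}), treating the damping term as a perturbation via Duhamel's formula. By time-translation invariance it suffices to consider a solution $u$ of the damped equation on $[0,T]$ with coefficient $a_n\in\mathcal A$ and $u(0,\cdot)=u_0$. Writing \eqref{equ-decay} as $\partial_t u=iP(D)u-a_n u$ and applying Duhamel against the free propagator $e^{itP(D)}$ gives $u(t)=e^{itP(D)}u_0+w(t)$, where $w(t)=-\int_0^t e^{i(t-s)P(D)}(a_n u)(s)\,\d s$. Since Assumption ${\bf (A)}$ guarantees that $\mathcal A$ is precompact and bounded below in $L^1_x(\T;L^\infty_t[0,T])$, Lemma~\ref{lem-ob-compact-pertur} applies with $a_0=\alpha_0$ and yields a constant $C_0>0$, \emph{uniform} over $a_n\in\mathcal A$, with $\|u_0\|_{L^2(\T)}^2\le C_0\int_0^T\int_\T a_n|e^{itP(D)}u_0|^2\,\d x\,\d t$. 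Substituting $e^{itP(D)}u_0=u-w$ and using $|u-w|^2\le 2|u|^2+2|w|^2$ splits the right-hand side into the target quantity $\int_0^T\int_\T a_n|u|^2$ and an error term $\int_0^T\int_\T a_n|w|^2$ that must be controlled.

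The core of the argument is to bound the error term $\int_0^T\int_\T a_n|w|^2$ by the target quantity. I would first estimate $\int_0^T\int_\T a_n|w|^2\le\|a_n\|_{L^1_xL^\infty_t}\,\|w\|_{L^\infty_x(\T;L^2_t[0,T])}^2$, and then invoke the nonhomogeneous Strichartz estimate of Lemma~\ref{lem-decay-Stri-non} to get $\|w\|_{L^\infty_xL^2_t}\le C_S\|a_n u\|_{L^1_xL^2_t}$. The crucial observation, which closes the loop, is that the source $a_n u$ is itself measured by the weighted quantity $\int_0^T\int_\T a_n|u|^2$: exploiting $a_n\ge 0$ through the pointwise bound $a_n^2\le\|a_n(\cdot,x)\|_{L^\infty_t}\,a_n$ in $t$, followed by Cauchy--Schwarz in $x$, gives $\|a_n u\|_{L^1_xL^2_t}\le\|a_n\|_{L^1_xL^\infty_t}^{1/2}\big(\int_0^T\int_\T a_n|u|^2\big)^{1/2}$. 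Chaining these estimates yields $\int_0^T\int_\T a_n|w|^2\le C_S^2\|a_n\|_{L^1_xL^\infty_t}^2\int_0^T\int_\T a_n|u|^2$.

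Combining the two steps gives $\|u_0\|_{L^2(\T)}^2\le 2C_0\big(1+C_S^2\|a_n\|_{L^1_xL^\infty_t}^2\big)\int_0^T\int_\T a_n|u|^2\,\d x\,\d t$. Uniformity in $n$ follows because a precompact subset of a normed space is bounded, so $a_1:=\sup_{a\in\mathcal A}\|a\|_{L^1_xL^\infty_t}<\infty$; hence the constant $C:=2C_0(1+C_S^2a_1^2)$ is independent of $n$, as required. I expect the main obstacle to be precisely this error-control step: one must bound the difference $w$ in the $a_n$-\emph{weighted} norm, rather than a plain $L^2$ norm, purely in terms of the same weighted quantity appearing in the conclusion. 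The nonnegativity trick $a_n^2\le\|a_n\|_{L^\infty_t}a_n$ together with the nonhomogeneous Strichartz estimate is exactly what makes this possible, and notably requires no smallness assumption on $T$ or on the size of the damping. An equally workable alternative is a compactness--contradiction argument: assuming $\|u_0^{(j)}\|_{L^2(\T)}=1$ with $\int_0^T\int_\T a_{n_j}|u^{(j)}|^2\to 0$, the same estimates force $\|w^{(j)}\|_{L^\infty_xL^2_t}\to 0$ and hence $\int_0^T\int_\T a_{n_j}|e^{itP(D)}u_0^{(j)}|^2\to 0$, contradicting the uniform free observability of Lemma~\ref{lem-ob-compact-pertur}.
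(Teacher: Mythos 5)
Your proposal is correct and uses exactly the paper's ingredients: the Duhamel decomposition around the free flow, the weighted Cauchy--Schwarz trick $\|a_n u\|_{L^1_x L^2_t}\le\|a_n\|_{L^1_xL^\infty_t}^{1/2}\bigl(\int_0^T\int_\T a_n|u|^2\bigr)^{1/2}$, the nonhomogeneous Strichartz estimate of Lemma~\ref{lem-decay-Stri-non}, and the uniform free-flow observability over the precompact family $\mathcal{A}$ from Lemma~\ref{lem-ob-compact-pertur}. The only difference is packaging: the paper runs these same estimates inside a compactness--contradiction argument (precisely the alternative you sketch in your final sentence), whereas you chain them directly to produce an explicit constant; both are valid.
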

\begin{proof}
We argue by contradiction. Assume that there exist sequences $\{u_{0,l}\}_{l\in\N^*}\subset L^2(\T)$ and $\{b_l\}_{l\in\N^*}\subset \mathcal{A}$ such that
\begin{align}\label{equ-decay-ob-1}
\|u_{0,l}\|_{L^2(\T)}=1, \forall l\geq 1, \quad   \int_0^T\int_\T b_l(t,x)|u_l(t,x)|^2\d x \d t \to 0 \text{ as } l\to \infty,   
\end{align}
where $u_l$ solves the equation $i\partial_tu_l+P(D)u_l+ib_lu_l=0, u_l|_{t=0}=u_{0,l}$. By Duhamel's principle, $u_l(t)=e^{itP(D)}u_{0,l}-\int_0^te^{-i(t-s)P(D)}(b_lu_l)\d s$. Therefore, we have
\begin{align}\label{equ-decay-ob-4}
 \|b_l^{\frac{1}{2}}e^{itP(D)}u_{0,l}\|_{L^2([0,T]\times\T)}\leq \|b_l^{\frac{1}{2}}u_l\|_{L^2([0,T]\times\T)}+\|b_l^{\frac{1}{2}}\int_0^te^{-i(t-s)P(D)}(b_lu_l)\d s\|_{L^2([0,T]\times\T)}.   
\end{align}
Since $b_l\in \mathcal{A}$, $b_l^{\frac{1}{2}}$ is uniformly bounded in $L^2_x(\T;L^\infty_t[0,T])$. Thus, applying Lemma \ref{lem-decay-Stri-non}, we have
\begin{multline}\label{equ-decay-ob-5}
\|b_l^{\frac{1}{2}}\int_0^te^{-i(t-s)P(D)}(b_lu_l)\d s\|_{L^2([0,T]\times\T)}\leq C\|\int_0^te^{-i(t-s)P(D)}(b_lu_l)\d s\|_{L^\infty_x(\T;L_t^2[0,T])} \\
 \leq C\|b_lu_l \|_{L^1_x(\T;L^2_t[0,T])} \leq C\|b_l^{\frac{1}{2}} \|_{L^2_x(\T;L_t^\infty[0,T])}\|b_l^{\frac{1}{2}}u_l \|_{L^2_x(\T;L_t^2[0,T])}\leq C\|b_l^{\frac{1}{2}}u_l\|_{L^2([0,T]\times\T)}.
\end{multline} 
Plugging \eqref{equ-decay-ob-5} into \eqref{equ-decay-ob-4}, using  \eqref{equ-decay-ob-1}, we find
\begin{align}\label{equ-decay-ob-6}
\int_0^T\int_\T b_l(t,x)|e^{itP(D)}u_{0,l}|^2\d x \d t \to 0 \text{ as } l\to \infty.    
\end{align}
Since $b_l$ belongs to the compact set $\mathcal{A}$, according to Lemma \ref{lem-ob-compact-pertur}, we have
$$
\|u_{0,l}\|^2_{L^2(\T)}\leq C\int_0^T\int_\T b_l(t,x)|e^{itP(D)}u_{0,l}|^2\d x \d t,
$$
which, together with \eqref{equ-decay-ob-6}, implies that 
$\|u_{0,l}\|_{L^2(\T)}\to 0$ as $l\to \infty$, leading a contradiction.
\end{proof}
\begin{proof}[Proof of Theorem \ref{thm-decay}]
Consider the damped dispersive equation \eqref{equ-decay} in time intervals $[(n-1)T,nT]$, $n\geq1$, $i\partial_tu+P(D)u+ia(t,x)u=0$. The classical energy estimates imply
\begin{align}\label{equ-decay-proof-2}
\|u((n-1)T,\cdot)\|^2_{L^2(\T)}=\|u(nT,\cdot)\|^2_{L^2(\T)}+2\int_{(n-1)T}^{nT}\int_\T a(t,x)|u(t,x)|^2\d x \d t.  
\end{align}
Thanks to Lemma \ref{lem-decay-ob}, there exists a constant $C>0$ such that for all $n=1,2,\ldots,$
\begin{align}\label{equ-decay-proof-3}
 C\|u((n-1)T,\cdot)\|^2_{L^2(\T)}\leq 
 \int_{(n-1)T}^{nT}\int_\T a(t,x)|u(t,x)|^2\d x \d t.
\end{align}
It follows from \eqref{equ-decay-proof-2}-\eqref{equ-decay-proof-3} that for some $\alpha\in(0,1)$ such that
$$
\|u(nT,\cdot)\|^2_{L^2(\T)}\leq \alpha \|u((n-1)T,\cdot)\|^2_{L^2(\T)}, \quad n=1,2,\ldots.
$$
An iteration argument gives the desired exponential decay of $\|u(t,\cdot)\|_{L^2(\T)}$.
\end{proof}

\appendix
\section{Moment method and compactness-uniqueness method}
\subsection{Review of the moment method}\label{sec: moment method}
Here, we use a simplified model to provide a brief overview of the moment method. We consider the control problem of the linear KdV equation without any restriction.
\begin{equation}\label{eq: free-control-kdv}
(\partial_t+\partial_x^3)u=f,\mbox{ in }\T^2_{t,x},\;\;\;u|_{t=0}=u_0,u|_{t=T}=u_T.
\end{equation}
After writing the controlled equation \eqref{eq: free-control-kdv} into its Fourier modes, we have
\begin{gather*}
u_0(x)=\sum_{k\in\Z}\widehat{u}_0(k)e^{ikx},\;\;\;u_T(x)=\sum_{k\in\Z}\widehat{u}_T(k)e^{ikx},\;\;\;
u(t,x)=\sum_{k\in\Z}\widehat{u}_0(k)e^{ik^3t}e^{ikx}.
\end{gather*}
Therefore, we need to solve the moment problem in the following form:
\begin{equation}\label{eq: moment problem}
\widehat{u}_T(k)-\widehat{u}_0(k)e^{ik^3T}=\int_0^T\int_{\T}e^{ik^3(T-s)}f(t,x)e^{ikx}\d t\d x.    
\end{equation}
We aim to write our control function as
\begin{equation}\label{eq: moment-control-f}
f(t,x)=\sum_{k}f_k\phi_k(t)e^{-ikx}, 
\end{equation}
where $\phi_k$ satisfies the bi-orthogonal property:
\begin{equation}\label{eq: bi-orthogonal-prop}
\int_0^T\phi_k(t)e^{-il^3t}\d t=\delta_{kl},\forall k,l\in\Z,    
\end{equation}
which means that $\{\phi_k\}_{k\in\Z}$ forms a so-called \textit{bi-orthonormal family}. Indeed, thanks to \eqref{eq: moment-control-f}, we plug it into \eqref{eq: moment problem} and obtain
\[
\widehat{u}_T(k)-\widehat{u}_0(k)e^{ik^3T}=e^{ik^3T}\sum_{l\in\Z}f_l\int_0^T\int_{\T}\phi_l(s)e^{-ik^3s}e^{i(k-l)x}\d t\d x=2\pi e^{ik^3T}f_k.
\]
This algebraic equation provides the coefficients $f_k$, and we obtain our desired control function. 

In the present paper, we propose an alternative approach to the moment method—one that is better suited to rough control settings, based on techniques from harmonic analysis. 
\subsection{Brief revisit on compactness-uniqueness methods}\label{sec: compant-unique method}
In this part,  we briefly revisit how the compactness-uniqueness method applies to our mass-conserved KdV model. For further details, we refer interested readers to \cite{sun-kp,RS-kp2}. Basically, we aim to prove the following observability
\begin{equation}\label{eq: true ob}
\|u_0\|^2_{L^2(\T)}\leq C\int_0^T\int_{\T}|\mathcal{L}e^{it\partial_x^3}u_0|^2\d x\d t.
\end{equation}
The most crucial step is to establish the semiclassical observability for any well-spectrally localized initial states: $\exists h_0>0$ such that $\forall h\in(0,h_0)$ and $u_0\in L^2(\T)$
\begin{equation}\label{eq: semiclassical ob}
\|\chi(hD_x)u_0\|^2_{L^2(\T)}\leq C\int_0^T\int_{\T}|g(x)\chi(hD_x)u(t,x)|^2\d x\d t,
\end{equation}
where $\chi\in C^{\infty}_c(\R;[0,1])$ and $\supp\chi\subset[\frac{1}{2},2]$, and $u(t,x)$ is the solution to $h\partial_tu+(h\partial_x)^3u=0$. If \eqref{eq: semiclassical ob} holds, using the Littlewood-Paley partition of unity, 
\[
\sum_{k\in\Z}\chi_k(\xi)=1,\mbox{ with }\chi_k(\xi)=\chi(2^k\xi),\xi\neq0,
\]
based on a commutator estimate
\begin{equation}\label{eq: commutator est}
\int_0^T\|[\chi(hD_x),\mathcal{L}]e^{it\partial_x^3}u_0\|   ^2_{L^2(\T)}\d t\leq Ch^2\|u_0\|^2_{L^2(\T)}, 
\end{equation}
we are able to obtain the following weak observability 
\begin{equation}\label{eq: weak ob-com}
\|u_0\|^2_{L^2(\T)}\leq C\int_0^T\int_{\T}|g(x)u(t,x)|^2\d x\d t+C\|u_0\|^2_{H^{-1}}.
\end{equation}
Passing from the weak observability \eqref{eq: weak ob-com} to the true observability \eqref{eq: true ob}, we use the unique continuation property for eigenfunctions of $\partial_x^3$:
\begin{equation*}
\partial_x^3\phi=\lambda\phi,\phi\big|_{\supp g }=0\Rightarrow\phi=0.
\end{equation*}
To establish the semiclassical observability \eqref{eq: semiclassical ob} and the commutator estimate \eqref{eq: commutator est}, we normally rely on the symbolic $h$-pseudo-differential calculus. 

\section{Basic properties of Bourgain spaces}\label{sec: Basic properties of Bourgain spaces}
\begin{lemma}\label{lem: bourgain-basic-est}
For the Bourgain spaces defined in Definition \ref{defi: bourgain space}, we present some basic properties of Bourgain spaces here. Let $s,b\in\R$, and $T>0$ be given.
\begin{enumerate}
    \item If $b\leq b_1$, and $s\leq s_1$, then $X^{s_1,b_1}$ is continuously embedded in the space $X^{s,b}$.
    \item There exists a constant $C>0$ such that for any $\varphi\in H^s(\T)$, 
    \begin{equation}\label{eq: bound by Hs}
    \|S(t)\varphi\|_{X^{s,b}_T}\leq C\|\varphi\|_{H^s(\T)},\;\;\|S(t)\varphi\|_{Z^{s,b}_T}\leq C\|\varphi\|_{H^s(\T)}
    \end{equation}
    \item There exists a constant $C>0$ such that for any $h\in X^{s,b-1}_T$, 
    \begin{equation}\label{eq: stri-est-b+}
     \|\int_0^tS(t-t')h(t')\d t'\|_{X^{s,b}_T}\leq C\|h\|_{X^{s,b-1}_T}, \mbox{ provided that }b>\frac{1}{2}.   
    \end{equation}
    \item There exists a constant $C>0$ such that for any $h\in Z^{s,-\frac{1}{2}}_T$, 
    \begin{equation}\label{eq: stri-est-Z}
        \|\int_0^tS(t-t')h(t')\d t'\|_{Z^{s,\frac{1}{2}}_T}\leq C\|h\|_{Z^{s,-\frac{1}{2}}_T}.
    \end{equation}
\end{enumerate}
\end{lemma}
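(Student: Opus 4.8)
The plan is to read off all four properties directly from the definitions of $N_{s,b}$ and $\widetilde N_{s,b}$, using a single reduction device: after taking the $k$-th spatial Fourier coefficient and conjugating by the free phase $e^{-ik^3t}$, each estimate collapses to a purely temporal inequality summed against the weight $(1+|k|)^{2s}$. Throughout I fix a cutoff $\psi\in C_c^\infty(\R)$ with $\psi\equiv1$ on $[0,T]$; since the restriction norm is an infimum over extensions and the Duhamel integral on $[0,T]$ depends only on the source restricted to $[0,T]$, it is enough in each case to bound the full-line norm of a suitable $\psi$-localized object.

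Property (1) is immediate from the pointwise bounds $(1+|k|)^{2s}\le(1+|k|)^{2s_1}$ and $(1+|\tau-k^3|)^{2b}\le(1+|\tau-k^3|)^{2b_1}$, which give $N_{s,b}(h)\le N_{s_1,b_1}(h)$ and hence the embedding. For (2), I would note that $\psi(t)S(t)\varphi$ has space-time Fourier transform $\widehat\psi(\tau-k^3)\widehat\varphi(k)$; substituting $\sigma=\tau-k^3$ factorizes
\begin{equation*}
N_{s,b}\big(\psi S(\cdot)\varphi\big)^2=\Big(\int_\R|\widehat\psi(\sigma)|^2(1+|\sigma|)^{2b}\,\d\sigma\Big)\,\|\varphi\|_{H^s(\T)}^2,
\end{equation*}
with a finite constant since $\widehat\psi$ is Schwartz. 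The identical computation with the $L^1_\sigma$-weight $(1+|\sigma|)^{b-1/2}$ controls $\widetilde N_{s,b-1/2}$, and hence the full $Z^{s,b}_T$ norm.

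For (3) and (4) I would set $u(t)=\int_0^tS(t-t')h(t')\,\d t'$, take its $k$-th spatial coefficient, and introduce $U_k(t)=e^{-ik^3t}\widehat u(t,k)$ and $H_k(t)=e^{-ik^3t}\widehat h(t,k)$, so that $U_k(t)=\int_0^tH_k(t')\,\d t'$. Because the phase shift moves the modulation variable $\tau-k^3$ to $\tau$, one obtains the identities $\|\psi u\|_{X^{s,b}}^2=\sum_k(1+|k|)^{2s}\|\psi U_k\|_{H^b_t(\R)}^2$ and $\|h\|_{X^{s,b-1}}^2=\sum_k(1+|k|)^{2s}\|H_k\|_{H^{b-1}_t(\R)}^2$, together with the analogous reductions of the $\widetilde N$-norms. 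Thus (3) follows by summing in $k$ the classical scalar estimate $\|\psi(t)\int_0^tH\|_{H^b_t}\lesssim\|H\|_{H^{b-1}_t}$, valid for $b>\tfrac12$, whose proof splits the output frequency into the regimes $|\tau|\lesssim1$ and $|\tau|\gg1$ and exploits the rapid decay of $\widehat\psi$.

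The endpoint statement (4) is where I expect the genuine difficulty. At $b=\tfrac12$ the scalar lemma above breaks down—this is the usual obstruction that $H^{1/2}$ fails to embed into $L^\infty$—which is precisely why the auxiliary norm $\widetilde N$ and the combined space $Z^{s,b}$ are introduced: the $L^1_\tau$ structure of $\widetilde N_{s,-1/2}$ recovers the missing sup-norm control in time. Concretely, after the same group reduction, \eqref{eq: stri-est-Z} will follow from the endpoint scalar estimate
\begin{equation*}
\Big\|\psi(t)\!\int_0^t\!H\Big\|_{H^{1/2}_t}+\Big\|(1+|\tau|)^{1/2}\,\mathcal F_t\big[\psi\!\int_0^t\!H\big]\Big\|_{L^1_\tau}\lesssim\|H\|_{H^{-1/2}_t}+\big\|(1+|\tau|)^{-1/2}\widehat H\big\|_{L^1_\tau},
\end{equation*}
which is the standard endpoint Duhamel estimate underlying Bourgain's KdV theory; summing against $(1+|k|)^{2s}$ yields the claim. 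In a full write-up this endpoint lemma is the only genuinely technical ingredient, requiring the careful low/high modulation decomposition at the $b=\tfrac12$ threshold, whereas (1)--(3) amount to bookkeeping together with the well-known sub-endpoint antiderivative bound.
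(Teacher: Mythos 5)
Your overall route is, in substance, the route the paper intends: the paper's entire proof of Lemma \ref{lem: bourgain-basic-est} is the citation ``We refer to \cite{Tao-book,CKSTT-03} for its proof,'' and what you have written out is precisely the standard argument from those references --- reduction by spatial Fourier series and conjugation by the free phase to scalar-in-time estimates, with (1) a weight comparison, (2) factorization through a Schwartz cutoff $\psi$, (3) the sub-endpoint antiderivative bound for $b>\tfrac12$, and (4) an endpoint Duhamel estimate whose very purpose is the $Y$-component of $Z^{s,b}$. Items (1)--(3) are correct as you present them, including the observation that it suffices to bound the $\psi$-localized extension because the Duhamel integral on $[0,T]$ only sees $h|_{[0,T]}$.

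There is, however, a bookkeeping error in (4) that prevents your reduction from closing. In (2) you correctly take the $Z^{s,b}$-norm to be $N_{s,b}+\widetilde N_{s,b-\frac12}$ (this matches the set definition in Definition \ref{defi: bourgain space} and the convention of \cite{CKSTT-03}; the displayed norm formula in the paper, which says $\widetilde N_{s,b}$, is an internal typo). Under that convention the $L^1_\tau$ component of $\|\cdot\|_{Z^{s,1/2}}$ carries the weight $(1+|\tau-k^3|)^{0}=1$ and that of $\|\cdot\|_{Z^{s,-1/2}}$ carries $(1+|\tau-k^3|)^{-1}$, so after your group reduction the scalar estimate you actually need is
\begin{equation*}
\Big\|\psi\int_0^t H\Big\|_{H^{1/2}_t}+\Big\|\mathcal{F}_t\Big[\psi\int_0^t H\Big]\Big\|_{L^1_\tau}\lesssim \|H\|_{H^{-1/2}_t}+\big\|(1+|\tau|)^{-1}\widehat H\big\|_{L^1_\tau},
\end{equation*}
which is exactly the linear lemma of \cite{CKSTT-03}. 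The estimate you display instead carries the weights $(1+|\tau|)^{1/2}$ on the left and $(1+|\tau|)^{-1/2}$ on the right, i.e.\ it silently switches to the other (typo'd) convention, contradicting the one you used in (2). That inequality happens to be true, but it does not imply \eqref{eq: stri-est-Z}: its right-hand side $\|(1+|\tau|)^{-1/2}\widehat H\|_{L^1_\tau}$ strictly dominates $\|(1+|\tau|)^{-1}\widehat H\|_{L^1_\tau}$ and is not controlled by $\|h\|_{Z^{s,-\frac12}}$ --- for $\widehat H=\mathbf{1}_{[1,N]}$ the former is of size $N^{1/2}$ while the latter is of size $\log N$ --- so summation over $k$ against $(1+|k|)^{2s}$ does not close. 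The fix is simply to restate the endpoint lemma with the weights $1$ and $(1+|\tau|)^{-1}$; the proof sketch you indicate (low/high modulation splitting, Taylor expansion at low output frequency, rapid decay of $\widehat\psi$) then goes through verbatim, and this is the one genuinely technical ingredient that both you and the paper ultimately delegate to \cite{CKSTT-03}.
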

\begin{proof}
We refer to \cite{Tao-book,CKSTT-03} for its proof.  
\end{proof}
\begin{lemma}[Bilinear estimates]\label{lem: bourgain bilinear-est}
Let $s\geq 0$, $T\in(0,1)$, and $u,v\in X^{s,\frac{1}{2}}_T\cap L^2([0,T],L^2_0(\T))$. There exist some constants $\theta>0$ and $C>0$ independent of $T$ and $u,v$ such that
\begin{equation}\label{eq: bilinear-est}
\|\partial_x(uv)\|_{Z^{s,-\frac{1}{2}}_T}\leq CT^{\theta}\|u\|_{X^{s,\frac{1}{2}}_T}\|v\|_{X^{s,\frac{1}{2}}_T}
\end{equation}
\end{lemma}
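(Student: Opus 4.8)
The plan is to prove the bilinear estimate \eqref{eq: bilinear-est} by reducing it to the standard periodic KdV bilinear estimate of Bourgain, Kenig--Ponce--Vega, and Colliander--Keel--Staffilani--Takaoka--Tao, and then to recover the small factor $T^{\theta}$ from the time localization. First I would recall the classical (non-restricted, global-in-time) bilinear estimate for mean-zero data: there is a constant $C>0$ so that for all $U,V\in X^{s,\frac12}$ with spatial Fourier support in $\Z\setminus\{0\}$,
\begin{equation*}
\|\partial_x(UV)\|_{Z^{s,-\frac12}}\leq C\,\|U\|_{X^{s,\frac12}}\|V\|_{X^{s,\frac12}}.
\end{equation*}
This is the heart of the matter and I would take it as known from \cite{Bourgain-93-kdv,KPV96,CKSTT-03}; the mean-zero restriction is exactly what removes the resonant contribution $k_1+k_2=0$ that otherwise obstructs the $s=0$ estimate on $\T$, and it is available here because $u,v\in L^2([0,T];L^2_0(\T))$.

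Next I would pass from the global estimate to the restricted-in-time spaces $X^{s,\frac12}_T$, $Z^{s,-\frac12}_T$ and extract the gain $T^{\theta}$. The standard mechanism is to exploit a small amount of room in the $b$-index: one proves a slightly stronger global estimate landing in $Z^{s,-\frac12+\epsilon}$ rather than $Z^{s,-\frac12}$ (or, symmetrically, using inputs in $X^{s,\frac12-\epsilon}$), which is permitted since the bilinear estimate has a little slack in the dispersive smoothing. Then I would invoke the time-localization lemma stating that for $-\frac12<b'\le b<\frac12$ and $T\in(0,1)$ one has $\|w\|_{X^{s,b'}_T}\lesssim T^{b-b'}\|w\|_{X^{s,b}_T}$, together with an extension/restriction argument: choose global extensions $\widetilde U,\widetilde V$ of $u,v$ nearly optimal in $X^{s,\frac12}$, multiply by a smooth temporal cutoff $\psi$ supported near $[0,T]$, apply the global bilinear estimate to $\psi\widetilde U,\psi\widetilde V$, and then use the embedding between $Z$-spaces with the time-localization factor to convert the $\epsilon$-gain in regularity into a positive power $T^{\theta}$ of the time length, with $\theta=\epsilon$. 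Taking the infimum over extensions gives the restricted-norm bound.

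The main obstacle, and the step deserving the most care, is organizing the frequency interactions in the global bilinear estimate so that the mean-zero hypothesis is used correctly and the $\epsilon$-slack is genuinely present. Concretely, after writing $\partial_x(UV)$ on the Fourier side one must control the multiplier $\tfrac{|k|\,\langle \tau-k^3\rangle^{-\frac12+\epsilon}}{\langle \tau_1-k_1^3\rangle^{\frac12}\langle \tau_2-k_2^3\rangle^{\frac12}}$ summed against the resonance relation, where for $k=k_1+k_2$ with $k_1,k_2\neq 0$ the algebraic identity $k^3-k_1^3-k_2^3=3kk_1k_2$ provides the crucial lower bound $\max(\langle\tau-k^3\rangle,\langle\tau_1-k_1^3\rangle,\langle\tau_2-k_2^3\rangle)\gtrsim |k\,k_1\,k_2|$ that drives the smoothing; the case $k=0$ (the genuinely resonant, non-integrable case) is excluded precisely by the $L^2_0$ assumption. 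The $Y$-type component of the $Z$-norm requires the analogous estimate with the $L^1_\tau$ structure, handled by the same resonance bound. I do not intend to reproduce these summations in detail, as they are by now standard; rather, I would cite \cite{CKSTT-03,Tao-book} for the core inequality and devote the writing to the time-localization and extension bookkeeping that yields the explicit factor $T^{\theta}$.
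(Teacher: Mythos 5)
Your overall architecture (quote the core periodic bilinear estimate for mean-zero data, then extract $T^\theta$ from time localization) sounds reasonable, but the step you rely on to produce $T^\theta$ is exactly the step that fails on the torus. You claim one can prove a slightly stronger global estimate "landing in $Z^{s,-\frac12+\epsilon}$" or "using inputs in $X^{s,\frac12-\epsilon}$" because "the bilinear estimate has a little slack in the dispersive smoothing." On $\R$ this is true, but on $\T$ it is false: the periodic KdV bilinear estimate is rigid at $b=\frac12$, a well-known point going back to \cite{KPV96}. Concretely, take $\widehat{U}$ supported on $\{k=N,\ |\tau-N^3|\le 1\}$ and $\widehat{V}$ supported on $\{k=1,\ |\tau-1|\le 1\}$, each of unit $X^{0,\frac12}$ norm. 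The resonance identity forces the product to live at $k=N+1$ with modulation $|\tau-(N+1)^3|\sim 3N(N+1)\sim N^2$, so $\|\partial_x(UV)\|_{X^{0,-\frac12+\epsilon}}\sim N\cdot (N^2)^{-\frac12+\epsilon}=N^{2\epsilon}\to\infty$ while $\|U\|_{X^{0,\frac12}}\|V\|_{X^{0,\frac12}}\sim 1$; since the $Z^{s,-\frac12+\epsilon}$ norm dominates the $X^{s,-\frac12+\epsilon}$ norm, this kills the output-slack version. A symmetric example (place the $N^2$ modulation on the high-frequency input so that the output modulation is $O(1)$) shows $\|\partial_x(UV)\|_{X^{0,-\frac12}}\sim N$ while $\|U\|_{X^{0,\frac12-\epsilon}}\|V\|_{X^{0,\frac12-\epsilon}}\sim N^{1-2\epsilon}$, killing the input-slack version. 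Note the mean-zero hypothesis does not exclude these high$\times$low$\to$high interactions; it only removes $k_1+k_2=0$. So the global estimate you intend to cite "with slack" does not exist, and the localization lemma for $-\frac12<b'\le b<\frac12$ has nothing to act on: your extraction of $T^\theta$ collapses.

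This is not a bookkeeping issue but the substantive difficulty of the periodic case: the rigidity at $b=\frac12$ is precisely why the $Y$/$Z$-norms are needed and why the $T^\theta$ gain in \cite{Bourgain-93-kdv} comes from a finer analysis — roughly, one gains $T$ only in the modulation regions where the bound $\max\langle\cdot\rangle\gtrsim|kk_1k_2|$ is not saturated, and the saturated interactions (such as the high$\times$low$\to$high example above) are handled by a separate argument exploiting the time localization itself; this is how $\theta=\frac{1}{12}$ arises. For comparison, the paper does not attempt any of this: its proof of the lemma is a one-line citation to \cite{Bourgain-93-kdv} (with $\theta=\frac1{12}$) and \cite{CKSTT-03}. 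If you do not want to reproduce Bourgain's modulation analysis, the honest fix is to cite the restricted-norm estimate with the $T^\theta$ factor directly (as the paper and \cite{LRZ-2010} do), rather than deriving it from an unrestricted estimate with slack that is false on $\T$. Your discussion of the resonance identity and of the role of the mean-zero assumption is correct; only the $T^\theta$ mechanism is wrong.
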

\begin{proof}
This proof can be found in \cite{Bourgain-93-kdv} with $\theta=\frac{1}{12}$ (see also \cite{CKSTT-03}).
\end{proof}
    \normalem
    \bibliographystyle{alpha}
    \bibliography{ref}

\end{document}